\newtheorem{theorem}{Theorem}[section]
\newtheorem{proposition}[theorem]{Proposition}
\newtheorem{corollary}[theorem]{Corollary}
\newtheorem{remark}[theorem]{Remark}
\newtheorem{lemma}[theorem]{Lemma}
\newtheorem{definition}[theorem]{Definition}
\newenvironment{proof}[1][Proof]{\noindent\textbf{#1.} }{\ \rule{0.5em}{0.5em}}
\begin{document}

\title{Standing waves for the NLS equation with competing nonlocal and local
nonlinearities: the double $L^{2}$-supercritical case}
\date{}
\author{Shuai Yao$^{a}$\thanks{%
E-mail address: shyao2019@163.com (S. Yao)}, Hichem Hajaiej$^{b}$\thanks{%
E-mail address: hhajaie@calstatela.edu (H. Hajaiej)}, Juntao Sun$^{c}$%
\thanks{%
E-mail address: jtsun@sdut.edu.cn (J. Sun)}, Tsung-fang Wu$^{d}$\thanks{%
E-mail address: tfwu@nuk.edu.tw (T.-F. Wu)} \\
%EndAName
{\footnotesize $^{a}$\emph{School of Mathematics and Statistics, Central
South University, Changsha 410083, PR China}}\\
{\footnotesize $^{b}$\emph{Department of Mathematics, California State
University at Los Angeles, Los Angeles, CA 90032, USA}}\\
$^{c}${\footnotesize \emph{School of Mathematics and Statistics, Shandong
University of Technology, Zibo 255049, PR China}}\\
{\footnotesize $^{d}$\emph{Department of Applied Mathematics, National
University of Kaohsiung, Kaohsiung 811, Taiwan}}}
\maketitle

\begin{abstract}
We investigate the NLS equation with competing Hartree-type and power-type
nonlinearities
\begin{equation*}
\begin{array}{ll}
i\partial _{t}\psi +\Delta \psi +\gamma (I_{\alpha }\ast |\psi |^{p})|\psi
|^{p-2}\psi +\mu |\psi |^{q-2}\psi =0, & \text{ }\forall (t,x)\in \mathbb{%
R\times R}^{N},%
\end{array}%
\end{equation*}%
where $\gamma \mu <0$. We establish conditions for the local well-posedness
in the energy space. Under the double $L^{2}$-supercritical case, we prove
the existence and multiplicity of standing waves with prescribed mass by
developing a constraint method when $\gamma <0,\mu >0$ and $\gamma >0,\mu
<0, $ respectively. Moreover, we prove weak orbital stablility and strong
instability of standing waves by considering a suitable local minimization
problem and by analyzing the fibering mapping, respectively. A new analysis
of the fibering mapping is performed in this work. We believe that it is
innovative as it was not discussed at all in any previous results. The lower
bound rate of blow-up solutions for the Cauchy problem is given as well. Due
to the different \textquotedblleft strength" of the two types of
nonlinearities, we find some essential differences in our results between
these two competing cases. We will be dealing with two major scenarios that
are totally different from each other due to their diverse geometric
structure. This leads to surprising findings. Additionally, the competing
pure power-type nonlinearities case can be derived from our study thanks to
a good choice of the kernel of the Hartree term.
\end{abstract}

\tableofcontents

\textbf{Keywords: }NLS equations; competing nonlinearities; variational
methods; dynamics.

\textbf{2020 Mathematics Subject Classification:} 35J20, 35J61, 35Q40.

\section{Introduction}

Consider the Cauchy problem for the NLS equation with combined Hartree-type
and power-type nonlinearities:%
\begin{equation}
\left\{
\begin{array}{l}
i\partial _{t}\psi +\Delta \psi +\gamma (I_{\alpha }\ast |\psi |^{p})|\psi
|^{p-2}\psi +\mu |\psi |^{q-2}\psi =0, \\
\psi (0,x)=\psi _{0}(x),%
\end{array}%
\right.  \label{e1-0}
\end{equation}%
where $\psi (t,x)$ is the complex valued function in the spacetime $\mathbb{%
R\times R}^{N}$ ($N\geq 1$), $2_{\alpha }\leq p\leq 2_{\alpha }^{\ast }$ ($%
2_{\alpha }=\frac{N+\alpha }{N},$ $2_{\alpha }^{\ast }=\frac{N+\alpha }{N-2}$
if $N\geq 3$ and $2_{\alpha }^{\ast }=\infty $ if $N=1,2$), $2<q\leq 2^{\ast
}$ ($2^{\ast }=\frac{2N}{N-2}$ if $N\geq 3$ and $2^{\ast }=\infty $ if $%
N=1,2 $), the parameters $\gamma ,\mu \in \mathbb{R}$ and $I_{\alpha }$ is
the Riesz potential of order $\alpha \in (0,N)$ defined by
\begin{equation*}
I_{\alpha }=\frac{A(N,\alpha )}{|x|^{N-\alpha }}\quad \text{with}\quad
A(N,\alpha )=\frac{\Gamma (\frac{N-\alpha }{2})}{\pi ^{N/2}2^{\alpha }\Gamma
(\frac{\alpha }{2})}\text{ for each }x\in \mathbb{R}^{N}\backslash \{0\},
\end{equation*}%
and $\ast $ the convolution product on $\mathbb{R}^{N}$. The exponent $%
2_{\alpha }$ ($2_{\alpha }^{\ast }$) is the lower (upper) critical exponent
in the sense of Hardy-Littlewood-Sobolev inequality, and $2^{\ast }$ is the
Sobolev critical exponent in the sense of Sobolev inequality.

Equation (\ref{e1-0}) appears in the context of various physical models. It
seems to be first introduced in the pioneering work of Fr\"{o}hlich and
Pekar's to describe the model of the polaron, where free electrons in an
ionic lattice interact with phonons associated with deformations of the
lattice or with the polarisation that it creates in the medium (interaction
of an electron with its own hole) \cite{F1,F2}. It is a large system model
of non-relativistic bosonic atoms and molecules under an attractive
interaction that is weaker and has a longer range than the one of the
nonlinear Schr\"{o}dinger equation, where the interaction potential $%
I_{\alpha }$ is formally Dirac's delta at the origin \cite{FL}. Moreover, (%
\ref{e1-0}) also arises as a mean-field limit of a bosonic system with
attractive two-body interactions which can be taken rigorously in many cases
\cite{FL,LNR}.

For the Cauchy problem (\ref{e1-0}), when the initial data $\psi _{0}\in
H^{1}(\mathbb{R}^{N})$, we can study the finite Hamiltonian or the finite
energy solutions, which satisfy energy (Hamiltonian), mass and momentum
conservations:
\begin{equation*}
E_{\gamma ,\mu }[\psi (t)]:=\frac{1}{2}\int_{\mathbb{R}^{N}}|\nabla \psi
|^{2}dx-\frac{\gamma }{2p}\int_{\mathbb{R}^{N}}(I_{\alpha }\ast |\psi
|^{p})|\psi |^{p}dx-\frac{\mu }{q}\int_{\mathbb{R}^{N}}|\psi
|^{q}dx=E_{\gamma ,\mu }[\psi _{0}],
\end{equation*}%
\begin{equation*}
M[\psi (t)]:=\int_{\mathbb{R}^{N}}|\psi |^{2}dx=M[\psi _{0}],
\end{equation*}%
\begin{equation*}
P[\psi (t)]:=\mathrm{Im}\int_{\mathbb{R}^{N}}\bar{\psi}\nabla \psi dx=P[\psi
_{0}].
\end{equation*}

An important feature related to the nonlinear evolution equations such as (%
\ref{e1-0}) is the study of standing waves. A standing wave of (\ref{e1-0})
is a solution of the form%
\begin{equation*}
\psi (t,x)=e^{i\lambda t}u(x),
\end{equation*}%
where $\lambda \in \mathbb{R}$ and $u$ satisfies the stationary equation
\begin{equation}
-\Delta u+\lambda u=\gamma (I_{\alpha }\ast \left\vert u\right\vert
^{p})|u|^{p-2}u+\mu |u|^{q-2}u\quad \text{in}\quad \mathbb{R}^{N}.
\label{e1-1}
\end{equation}%
At this point, there are two different ways to deal with (\ref{e1-1})
according to the role of the frequency $\lambda $:\newline
$(i)$ the frequency $\lambda $ is a fixed and assigned parameter;\newline
$(ii)$ the frequency $\lambda $ is an unknown of the problem.

For case $(i),$ one can see that solutions of (\ref{e1-1}) can be obtained
as critical points of the functional defined in $H^{1}(\mathbb{R}^{N})$ by%
\begin{equation*}
I(u):=\frac{1}{2}\int_{\mathbb{R}^{N}}|\nabla u|^{2}dx+\frac{\lambda }{2}%
\int_{\mathbb{R}^{N}}|u|^{2}dx-\frac{\gamma }{2p}\int_{\mathbb{R}%
^{N}}(I_{\alpha }\ast |u|^{p})|u|^{p}dx-\frac{\mu }{q}\int_{\mathbb{R}%
^{N}}|u|^{q}dx.
\end{equation*}%
This case has been extensively studied by many authors in the last years,
see for example \cite{AN,GS,MS,R,SW,SW1,SW3} and the references therein.

Alternatively one can look for solutions of (\ref{e1-1}) with $\lambda $
unknown. In this case $\lambda \in \mathbb{R}$ appears as a Lagrange
multiplier and $L^{2}$-norms of solutions are prescribed, which are usually
called normalized solutions. This study seems particularly meaningful from
the physical point of view, since standing waves of (\ref{e1-0}) conserve
their mass along time, and physicists are very interested in the stability.
Here we focus on this issue. For $c>0$ given, we consider the problem of
finding solutions to
\begin{equation}
\left\{
\begin{array}{ll}
-\Delta u+\lambda u=\gamma (I_{\alpha }\ast \left\vert u\right\vert
^{p})|u|^{p-2}u+\mu |u|^{q-2}u & \quad \text{in}\quad \mathbb{R}^{N}, \\
\int_{\mathbb{R}^{N}}|u|^{2}dx=c. &
\end{array}%
\right.  \tag{$P_{c}$}
\end{equation}%
Solutions of $(P_{c})$ can be obtained as critical points of the energy
functional $E_{\gamma ,\mu }:H^{1}(\mathbb{R}^{N})\rightarrow \mathbb{R}$
given by%
\begin{equation*}
E_{\gamma ,\mu }(u):=\frac{1}{2}\int_{\mathbb{R}^{N}}|\nabla u|^{2}dx-\frac{%
\gamma }{2p}\int_{\mathbb{R}^{N}}(I_{\alpha }\ast |u|^{p})|u|^{p}dx-\frac{%
\mu }{q}\int_{\mathbb{R}^{N}}|u|^{q}dx  \label{e1-3}
\end{equation*}%
on the constraint
\begin{equation*}
S(c):=\left\{ u\in H^{1}(\mathbb{R}^{N})\text{ }|\text{ }\int_{\mathbb{R}%
^{N}}|u|^{2}dx=c\right\} .  \label{e1-4}
\end{equation*}%
It is easy to show that $E_{\gamma ,\mu }$ is a well-defined and $C^{1}$
functional on $S(c)$ for $2_{\alpha }\leq p\leq 2_{\alpha }^{\ast }$ and $%
2<q\leq 2^{\ast }.$

As we will see, the number and properties of normalized solutions to $%
(P_{c}) $ are strongly affected by further assumptions on the exponents $p,q$%
, the parameters $\gamma ,\mu $ and the mass $c.$ When the problem involves
pure power-type nonlinearity, i.e. $\gamma =0$ and $\mu >0,$ from the
variational point of view, $E_{0,\mu }$ is bounded from below on $S\left(
c\right) $ for $2<q<\overline{q}:=2+4/N,$ here $\overline{q}$ is called the $%
L^{2}$-critical exponent for the power nonlinearity. Thus, for each $c>0$, a
ground state can be found as a global minimizer of $E_{0,\mu }$ on $S\left(
c\right) $, see \cite{L0,S0,S}. Moreover, the set of ground states is
orbitally stable \cite{CL,HS,S0}. For $\overline{q}<q<2^{\ast }$, on the
contrary, $E_{0,\mu }$ is unbounded from below on $S\left( c\right) $.
Jeanjean \cite{J} has developed an approach to show that we can still define
normalized ground state and they do exist for $c>0$ also in this case.
Recently, Hajaiej and Song \cite{HS1} provided a general approach to prove
the existence of normalized solutions for this kind of problem.

When the problem only involves the Hartree-type nonlinearity, i.e. $\gamma
>0 $ and $\mu =0,$ (\ref{e1-1}) is usually called the nonlinear Choquard or
Choquard--Pekar equation. It has several physical origins. In the case where
$N=3$ and $\alpha =p=2,$ the problem%
\begin{equation}
-\Delta u+\lambda u=\gamma \left( I_{2}\ast u^{2}\right) u\quad \text{in}%
\quad \mathbb{R}^{3}  \label{e1-2}
\end{equation}%
appears as a model in quantum theory of a polaron at rest \cite{P}. The
time-dependent form of (\ref{e1-2}) also describes the self-gravitational
collapse of a quantum mechanical wave-function \cite{P1}. Lieb \cite{L5}
proved the existence and uniqueness of normalized solutions for (\ref{e1-2})
on $S\left( c\right) $ by using symmetrization techniques, and Lions \cite{L}
studied the existence and stability issues of normalized solutions for (\ref%
{e1-2}) on $S\left( c\right) $. Recently, for (\ref{e1-1}) with $\gamma >0$
and $\mu =0,$ Li and Ye \cite{LY} showed that if $N\geq 3$ and $\overline{p}%
:=\frac{N+\alpha +2}{N}<p<2_{\alpha }^{\ast },$ then $E_{\gamma ,0}$ has a
MP geometry on $S(c)$ and there exists a normalized solution for each $c>0.$
Usually, $\overline{p}$ is called the $L^{2}$-critical exponent for the
Hartree-type nonlinearity.

When the problem involves both Hartree-type and a power-type nonlinearities,
the situation seems to become more complicated. We summarize two different
cases as follows.\newline
\textbf{$\bullet $ The mixed attractive case ($\gamma >0,\mu >0$).} Li \cite%
{L1,L3} studied the upper critical case of $(P_{c}),$ i.e. $p=2_{\alpha
}^{\ast }$ and $2<q<2^{\ast }.$ She concluded that $(P_{c})$ has one radial
normalized solution when $\overline{q}<q<2^{\ast }$ while two radial
normalized solutions when $2<q<\overline{q}.$\ Moreover, qualitative
properties and stability of normalized solutions are described as well. For
the lower critical case of $(P_{c}),$ i.e. $p=2_{\alpha }$ and $2<q\leq
2^{\ast },$ the first and third authors of the current paper proved several
nonexistence and existence results by introducing a method of adding mass
term and Sobolev subcritical approximation in \cite{YCRS}.\newline
\textbf{$\bullet $ The mixed competing case ($\gamma \mu <0$).} In the
specific case of $N=3,\gamma <0,\mu >0$ and $\alpha =p=2,$ (\ref{e1-0}) is
reduced to the well-known Schr\"{o}dinger--Poisson equations, appearing in
the physical literature as an approximation of the Hartree--Fock model of a
quantum many--body system of electrons, see \cite{LL0} for a mathematical
introduction to Hartree--Fock method. Clearly, the Hartree-type nonlinearity
in Schr\"{o}dinger--Poisson equations belongs to the $L^{2}$-subcritical
case, since $2=p<\overline{p}=7/3.$ Bellazzini and Siciliano \cite{BS1} and
Jeanjean and Luo \cite{JL} obtained a ground state as a global minimizer of $%
E_{\gamma ,\mu }$ on $S(c)$ for $2<q<3$ and $c>0$ sufficiently small, and
for $3<q<10/3$ and $c>0$ sufficiently large, respectively. Bellazzini et al.
\cite{BJL} found a mountain-pass normalized solution for $10/3<q<6$ and $c>0$
sufficiently small. Moreover, the strong instability of standing waves at
the mountain pass energy level was established. Later, Feng et al. \cite{FCW}
proved that the standing waves are strongly unstable by blow up for $q=10/3.$

The study of standing waves with prescribed mass for NLS equation with
competing Hartree-type and power-type nonlinearities has attracted many
mathematicians. However, so far only a very few results exist in our
context, \cite{BJL,BS1,FCW,JL}, and many interesting cases are still open,
such as the double $L^{2}$-supercritical case, since its geometric structure
is totally different from others, leading us to some surprising results.

In this paper, our aim is to develop a method that will enable us to
establish optimal results on the existence, multiplicity and dynamics of
standing waves with a prescribed mass for (\ref{e1-0}) with $\gamma \mu <0$
and $p,q$ being both $L^{2}$-supercritical exponents. More precisely, we
firstly establish the local well-posedness of the Cauchy problem (\ref{e1-0}%
) in the energy space $H^{1}(\mathbb{R}^{N}).$ Secondly, the existence and
multiplicity of normalized solutions of $(P_{c})$ are studied under the
cases of $\gamma <0,\mu >0$, and $\gamma >0,\mu <0$. As a result, the
corresponding standing waves with prescribed mass of (\ref{e1-0}) are
obtained. We note that there are some essential differences in the geometry
of the corresponding functionals $E_{\gamma ,\mu }$ between the two above
competing cases, since \textquotedblleft the strength" of Hartree-type and
power-type nonlinearities are different. Thirdly, we study the stability and
instability of standing waves with prescribed mass for (\ref{e1-0}).
Finally, we compute the lower bound rate of blow-up solutions for the Cauchy
problem (\ref{e1-0}). It is worth emphasizing that one has to face more
challenges due to the interaction of the double $L^{2}$-supercritical
nonlinearities. In order to overcome these considerable difficulties, new
ideas and techniques have been explored. More details will be discussed in
the next subsection.

\subsection{Main results}

Before stating our main results, we agree that when $p=2_{\alpha }^{\ast }$
or $q=2^{\ast }$ is involved, we always assume that $N\geq 3.$ Firstly, we
deal with the local well-posedness of the Cauchy problem (\ref{e1-0}) in the
energy space $H^{1}(\mathbb{R}^{N})$ with standard contraction mapping
argument.

\begin{theorem}
\label{t1} Let $N\geq 1,$ $\gamma $ and $\mu $ be nonzero real constants and
the initial data $\psi (0,x)=\psi _{0}(x)\in H^{1}(\mathbb{R}^{N}).$ If $%
\alpha \in (\max \{0,N-4\},N)$, $2\leq p\leq 2_{\alpha }^{\ast }$ and $%
2<q\leq 2^{\ast }$ (with additional assumption of smallness of $\Vert \psi
_{0}\Vert _{H^{1}}$ for $p=2_{\alpha }^{\ast }$ or $q=2^{\ast }$), then
there exists $T_{max}$ and a unique solution $\psi (t,x)\in C([0,T_{\max
});H^{1})$ of the Cauchy problem (\ref{e1-0}) such that either $T_{\max
}=\infty $ (\textbf{global existence}) or $T_{\max }<\infty $ and $\Vert
\psi (t,x)\Vert _{H^{1}}\rightarrow \infty $ as $t\uparrow T_{\max }$ (%
\textbf{blow up}). Moreover, if $|x|\psi _{0}\in L^{2}(\mathbb{R}^{N})$,
then the function $V(t):=\int_{\mathbb{R}^{N}}|x|^{2}|\psi (t,x)|^{2}dx$
belongs to $C^{2}([0,T_{\max }))$, and the following Virial identities
\begin{equation*}
V^{\prime }(t)=2\mathrm{Re}\int_{\mathbb{R}^{N}}|x|^{2}\bar{\psi}\partial
_{t}\psi dx=4\mathrm{Im}\int_{\mathbb{R}^{N}}\bar{\psi}x\cdot \nabla \psi dx
\end{equation*}%
and%
\begin{equation*}
V^{\prime \prime }(t)=8\left( \int_{\mathbb{R}^{N}}|\nabla \psi |^{2}dx-%
\frac{\gamma (Np-N-\alpha )}{2p}\int_{\mathbb{R}^{N}}(I_{\alpha }\ast |\psi
|^{p})|\psi |^{p}dx-\frac{\mu N(q-2)}{2q}\int_{\mathbb{R}^{N}}|\psi
|^{q}dx\right)
\end{equation*}%
hold for all $0\leq t<T_{\max }$.
\end{theorem}

\begin{remark}
\label{r1}For the non-perturbation case $\mu =0$ and $p\geq 2$, the local
well-posedness of $H^{1}$-solutions of (\ref{e1-0}) was established in \cite%
{AR,GV}. For the perturbation case $\mu \neq 0$ and $p=2$, Fang and Han
obtained the local well-posedness in the energy space $H^{1}(\mathbb{R}^{N})$
(see \cite[Propositions 3.1 and 3.2]{FH}). Theorem \ref{t1} generalizes the
results in \cite{AR,FH,GV}, in particular, where the double energy-critical
case is involved.
\end{remark}

Next, we focus on the existence and multiplicity of standing waves with
prescribed mass.

\begin{theorem}
\label{t4} Let $N\geq 2,\gamma <0$ and $\mu >0$. Assume that one of the two
following conditions holds:\newline
$(a)$ $\overline{p}<p<2_{\alpha }^{\ast },\overline{q}<q<2^{\ast }$ with $%
2Np-Nq-2\alpha \leq 0$ and $|\gamma |c^{\frac{N+\alpha -(N-2)p}{2}}$ is
sufficiently small;\newline
$(b)$ $\overline{p}<p\leq 2_{\alpha }^{\ast },\overline{q}<q<2^{\ast }$ with
$2Np-Nq-2\alpha >0$ and $|\gamma |$ is sufficiently small.\newline
Then $(P_{c})$ has a radially symmetric solution $(\lambda ^{-},u^{-})\in
\mathbb{R}^{+}\times H^{1}(\mathbb{R}^{N})$ satisfying $E_{\gamma ,\mu
}(u^{-})>0.$ Moreover, $u^{-}$ has exponential decay at infinity:
\begin{equation*}
|u^{-}|\leq Ce^{-\delta |x|},\quad \text{for }|x|\geq R,
\end{equation*}%
for some $C>0$, $\delta >0$ and $R>0$.
\end{theorem}

We note that under the assumptions of Theorem \ref{t4}, the functional $%
E_{\gamma ,\mu }$ is unbounded from below on $S(c).$ Then it will not be
possible to find a global minimizer. In order to seek for critical points of
$E_{\gamma ,\mu }$ restricted to $S\left( c\right) ,$ we shall use the
Pohozaev manifold $\mathcal{M}\left( c\right) $ that contains all the
critical points of $E_{\gamma ,\mu }$ restricted to $S(c).$ It is given by%
\begin{equation}
\mathcal{M}(c):=\left\{ u\in S(c)\text{ }|\text{ }Q(u)=0\right\} ,
\label{e1-13}
\end{equation}%
where $Q(u)=0$ is the Pohozaev type identity corresponding to (\ref{e1-1}).
For more details, we refer the reader to Section 2. As we shall see, for
Theorem \ref{t4} $(a)$, the functional $E_{\gamma ,\mu }$ is bounded from
below on $\mathcal{M}\left( c\right) ,$ a natural constraint of $E_{\gamma
,\mu }$. We expect to find critical points with positive level by applying
the minimax method to $E_{\gamma ,\mu }$ restricted to $\mathcal{M}(c)\cap
H_{r}^{1}(\mathbb{R}^{N}),$ introduced by Bartsch and Soave \cite{BS4}.
However, it is unclear if the solution $u^{-}$ obtained by us is a ground
state of $(P_{c})$ since it is difficult to verify that $\inf_{u\in \mathcal{%
M}(c)}E_{\gamma ,\mu }(u)=\inf_{u\in \mathcal{M}(c)\cap H_{r}^{1}(\mathbb{R}%
^{N})}E_{\gamma ,\mu }(u).$

For Theorem \ref{t4} $(b),$ due to the complicated competing effect between
Hartree type nonlinearity and the power one, we can not guarantee that an
arbitrary $u\in S(c)$ can always be projected onto $\mathcal{M}(c)$ for $c>0$%
. Furthermore, it seems difficult to prove that the functional $E_{\gamma
,\mu }$ is bounded from below on $\mathcal{M}(c)$. For these reasons,
inspired by \cite{CJ}, we shall introduce a submanifold of codimension $1$
in $S(c)$ given by%
\begin{equation*}
\Lambda (c):=V\cap \mathcal{M}(c),
\end{equation*}%
where $V$ is an open subset of $S(c)$ defined as (\ref{e4-34}) in Section 4
satisfying the dilation $u_{s}:=s^{N/2}u(sx)\in V$ for any $u\in V$ and $s>0$%
, and then apply the minimax method to $E_{\gamma ,\mu }$ restricted to $%
\Lambda (c)\cap H_{r}^{1}(\mathbb{R}^{N}).$ Since $V$ has a boundary, we
need to make sure that our deformation arguments take place inside $V$ by
imposing on the parameter $\gamma $.

\begin{remark}
\label{r2}In recent years, there has been much attention on the existence of
normalized solution for the following Schr\"{o}dinger equations with van der
Waals type potentials:
\begin{equation*}
\left\{
\begin{array}{ll}
-\Delta u+\lambda u=(|x|^{-\beta _{1}}\ast \left\vert u\right\vert
^{2})u+\gamma (|x|^{-\beta _{2}}\ast \left\vert u\right\vert ^{2})u & \quad
\text{in}\quad \mathbb{R}^{N}, \\
\int_{\mathbb{R}^{N}}|u|^{2}dx=c, &
\end{array}%
\right.
\end{equation*}%
where $\beta _{1}\neq \beta _{2}$ denotes the two-body potentials with
different width. We refer the reader to \cite{CJL,JL5} for $\gamma >0$ and
to \cite{BGH} for $\gamma <0.$ In particular, Bhimani et al. \cite{BGH}
established the existence of ground states for the above problem when $%
\gamma <0$ and $0<\beta _{2}<\beta _{1}<\min \{N,4\}$. However, there is an
unresolved case, i.e., $\gamma <0$ and $2<\beta _{1}<\beta _{2}<\min \{N,4\}$%
. The method of Theorem \ref{t4} $(b)$ can be applied to address this case,
which can be regarded as a supplement to the previous works.
\end{remark}

\begin{theorem}
\label{t6} Let $N\geq 2,\gamma >0$ and $\mu <0.$ Then the following
statements are true.

\begin{itemize}
\item[$(a)$] Assume that one of the four following conditions holds:\newline
$(i)$ $\overline{p}<p<2_{\alpha }^{\ast },\overline{q}<q<2^{\ast }$ with $%
2Np-Nq-\alpha q>0$ and $|\mu |c^{\frac{2N-(N-2)q}{4}}$ is sufficiently small;%
\newline
$(ii)$ $\overline{p}<p<2_{\alpha }^{\ast },\overline{q}<q\leq 2^{\ast }$
with $2Np-Nq-\alpha q\leq 0\leq 2Np-Nq-2\alpha .$\newline
Then $(P_{c})$ has a ground state $(\lambda ^{-},u^{-})\in \mathbb{R}%
^{+}\times H^{1}(\mathbb{R}^{N})$ satisfying $E_{\gamma ,\mu }(u^{-})>0.$ In
particular, such solution $u^{-}$ is radially symmetric.

\item[$(b)$] If $\overline{p}<p<2_{\alpha }^{\ast }$ and $\overline{q}<q\leq
2^{\ast }$ with $2Np-Nq-2\alpha <0,$ then for $c>0$ sufficiently small, $%
(P_{c})$ has two positive radially symmetric solutions $(\lambda ^{\pm
},u^{\pm })\in \mathbb{R}^{+}\times H^{1}(\mathbb{R}^{N})$ satisfying $%
E_{\gamma ,\mu }(u^{+})<0<E_{\gamma ,\mu }(u^{-})$ and $\Vert \nabla
u^{-}\Vert _{2}<\Vert \nabla u^{+}\Vert _{2}.$
\end{itemize}

Moreover, $u^{+}$ and $u^{-}$ have exponential decay at infinity.
\end{theorem}

\begin{remark}
\label{r3}If the response function is a Dirac-delta function, i.e. $%
I_{\alpha }(x)=\delta (x)$, then $(P_{c})$ is reduced to the following NLS
equation with combined power-type nonlinearities:%
\begin{equation}
\left\{
\begin{array}{ll}
-\Delta u+\lambda u=\gamma |u|^{p-2}u+\mu |u|^{q-2}u & \quad \text{in}\quad
\mathbb{R}^{N}, \\
\int_{\mathbb{R}^{N}}|u|^{2}dx=c. &
\end{array}%
\right.  \label{e1-5}
\end{equation}%
When $2<p\leq \overline{q}\leq q<2^{\ast }$ with $p\neq q,\gamma =1$ and $%
\mu \in \mathbb{R}$, the existence and stability/instability of normalized
ground states for (\ref{e1-5}) was studied by Soave \cite{S1}. When $%
2<p<q=2^{\ast },\gamma =1$ and $\mu >0,$ the existence, multiplicity and
stability/instability of normalized solutions for (\ref{e1-5}) was proved in
\cite{JL0,S2,WW}, which can be viewed as a counterpart of the Br\'{e}%
zis-Nirenberg problem in the context of normalized solutions. When $%
\overline{q}<p<q<2^{\ast },\gamma >0$ and $\mu <0,$ two radial normalized
solutions of (\ref{e1-5}) were found for $c>c^{\ast }$ by Jeanjean and Lu as
a special case in \cite[Theorem 1.3 and Remark 1.2]{JL4}. Our main result, Theorem \ref{t6} $(b)$, can
also be applied to the case of competing power-type nonlinearities with both
$L^{2}$-supercritical exponents, i.e. (\ref{e1-5}) with $\overline{q}%
<p<q<2^{\ast },\gamma >0$ and $\mu <0$. We can conclude that there exist two
radial normalized solutions for $c>0$ sufficiently small, which partly
extends the results in \cite{JL4}.
\end{remark}

Contrary to Theorem \ref{t4}, we can apply the minimax method to $E_{\gamma
,\mu }$ restricted to $\mathcal{M}(c)\cap H_{r}^{1}(\mathbb{R}^{N}) $ to
obtain a ground state of $(P_{c})$ with positive level in Theorem \ref{t6} $%
(a)$ by proving the equality $\inf_{u\in \mathcal{M}(c)}E_{\gamma ,\mu
}(u)=\inf_{u\in \mathcal{M}(c)\cap H_{r}^{1}(\mathbb{R}^{N})}E_{\gamma ,\mu
}(u).$ For Theorem \ref{t6} $(b)$, the solution $u^{-}$ with positive level
can be found by adopting the argument of Theorem \ref{t4} $(b)$, via a
replacement of $V$ with another open subset $U$ of $S(c)$ (see (\ref{e5-3})
in Section 4), and the solution $u^{+}$ with negative level can be obtained
as an interior local minimizer of $E_{\gamma ,\mu }$ on the set
\begin{equation*}
\mathbf{A}_{r_{0}}:=\left\{ u\in U\text{ }|\text{ }\Vert \nabla u\Vert
_{2}^{2}<r_{0}\right\} \text{ for some }r_{0}>0\text{ large enough.}
\end{equation*}%
Here we are not sure whether $u^{+}$ is a ground state of $(P_{c})$,
although its energy level is negative. In fact, we can prove that $E_{\gamma
,\mu }$ is bounded from below by a negative constant and that this energy
functional is coercive on $S(c)$ under the assumption of Theorem \ref{t6} $%
(b)$ (see Lemma \ref{L5-6} in Section 4), but the compactness is a delicate
problem due to the inconsistent scaling transformation, which at the moment
we could not solve.

By comparing Theorem \ref{t4} $(b)$ with Theorem \ref{t6} $(b),$ we find
that $(P_{c})$ has different number of solutions, although they are both
competing and double $L^{2}$-supercritical case. It shows that the
power-type nonlinearity is in some sense stronger that the Hartree-type one.
Note that two normalized solutions are found in Theorem \ref{t6} $(b)$. From
the geometry of the functional $E_{\gamma ,\mu }$, we find that the critical
point $u^{-}$ with positive level is closer to the origin than the critical
point $u^{+}$ with negative level.

Finally, we give the dynamic behavior of standing waves for Cauchy problem (%
\ref{e1-0}).

\begin{theorem}
\label{t8} Under the assumptions of Theorem \ref{t6} $(b)$ with $p\geq 2$,
the set of local minimizers $\mathcal{M}_{c}^{r_{0}}:=\left\{ u\in S(c)\cap
\mathbf{A}_{r_{0}}\text{ }|\text{ }E_{\gamma ,\mu }(u)=m^{+}(c)\right\} \neq
\emptyset $ is stable under the flow corresponding to the Cauchy problem (%
\ref{e1-0}). That is to say, for any $\varepsilon >0$, there exists $\delta
>0$ such that for any $\varphi \in H^{1}$ satisfing
\begin{equation*}
dist_{X}(\varphi ,\mathcal{M}_{c}^{r_{0}})<\delta ,
\end{equation*}%
the solution $\psi (t,\cdot )$ of the Cauchy problem (\ref{e1-0}) with $\psi
(0,\cdot )=\varphi $ satisfies
\begin{equation*}
\sup_{t\in \lbrack 0,T)}dist_{X}(\psi (t,\cdot ),\mathcal{M}%
_{c}^{r_{0}})<\varepsilon ,
\end{equation*}%
where $T$ is the maximal existence time for $\psi (t,\cdot ).$
\end{theorem}

We say that a standing wave $e^{i\lambda t}u(x)$ is strongly unstable if for
every $\varepsilon >0$ there exists $\psi _{0}\in H^{1}$ such that $\Vert
u-\psi _{0}\Vert _{H^{1}}<\varepsilon $, and $\psi (t)$ blows up in finite
time.

\begin{theorem}
\label{t7} $(i)$ Under the assumptions in Theorems \ref{t4} $(a)$ or
Theorems \ref{t6} $(a)$, if $\alpha \in (\max \left\{ 0,N-4\right\} ,N)$ and
$p\geq 2$, then the standing waves $\psi (t,x)=e^{i\lambda ^{-}t}u^{-}(x)$
to the Cauchy problem (\ref{e1-0}) is strongly unstable;\newline
$(ii)$ Under the assumptions in Theorems \ref{t4} $(b)$, if $\alpha \in
(\max \left\{ 0,N-4\right\} ,N)$, $p\geq 2$ and $\mu <\overline{\mu }$, then
the standing waves $\psi (t,x)=e^{i\lambda ^{-}t}u^{-}(x)$ to the Cauchy
problem (\ref{e1-0}) is strongly unstable;\newline
$(iii)$ Under the assumptions in Theorems \ref{t6} $(b)$, if $\alpha \in
(\max \left\{ 0,N-4\right\} ,N)$, $p\geq 2$ and $\gamma <\overline{\gamma }$%
, then the standing waves $\psi (t,x)=e^{i\lambda ^{-}t}u^{-}(x)$ to the
Cauchy problem (\ref{e1-0}) is strongly unstable.
\end{theorem}

\begin{remark}
\label{r4}$(i)$ The proofs of the above instability results depend on the
detailed analysis of the corresponding fibering mapping. In Theorems \ref{t7}
$(ii)$ and $(iii)$, the fibering mapping presents a new geometric structure,
which seems to be the first result for this kind of problem.\newline
$(ii)$ For the special case of $p=2,\gamma =1$ and $\mu =\left\{
-1,1\right\} $, Tian and Zhu \cite{TZ} showed the sharp energy threshold for
the global existence and blow-up to the Cauchy problem (\ref{e1-0}) by
utilizing comprehensive potential-well structures. In particular, when $%
\gamma =1$ and $\mu =-1$, they only considered the case of $Nq\leq
4N-2\alpha $. However, for the case of $Nq>4N-2\alpha $, it is more
difficult to obtain the instability/stability of standing waves for the
Cauchy problem (\ref{e1-0}). In Theorem \ref{t7}, we give a positive answer
by a detailed description of the geometric structure corresponding to the
functional.
\end{remark}

Moreover, we obtain the lower bound rate of blow-up solutions for the Cauchy
problem (\ref{e1-0}) as follows.

\begin{theorem}
\label{t10} Let $\alpha \in (\max \{0,N-4\},N)$, $2\leq p<2_{\alpha }^{\ast
} $ and $\overline{q}<q<2^{\ast }$ with $2(Np-N-\alpha )>(N-2)(q-2)p.$ If $%
\psi _{0}\in H^{1}$ and the corresponding solution $\psi (t,x)$ of the
Cauchy problem (\ref{e1-0}) blows up at the finite time $0<T<+\infty $, then
there exists a positive constant $C$ such that for all $0<t<T,$
\begin{equation*}
\Vert \nabla \psi \Vert _{L_{x}^{2}}\gtrsim C(T-t)^{-\frac{N+\alpha -(N-2)p}{%
2(Np-N-\alpha )(p-1)}}.
\end{equation*}
\end{theorem}

The rest of this paper is organized as follows. In Sect. 2, we introduce
some preliminary results. In Sect. 3, we give the proof of the local
well-posedness. In Sect. 4, we study the existence of standing waves and
prove Theorems \ref{t4} and \ref{t6}. Finally, dynamic behavior of standing
waves will be shown in Sect. 5.

\section{Preliminary results}

For the reader's convenience, we set
\begin{equation*}
A(u)=\int_{\mathbb{R}^{N}}|\nabla u|^{2}dx,\text{ }B(u)=\int_{\mathbb{R}%
^{N}}\left( I_{\alpha }\ast |u|^{p}\right) |u|^{p}dx,\text{ }C(u)=\int_{%
\mathbb{R}^{N}}|u|^{q}dx\quad \text{and }D(u)=\int_{\mathbb{R}^{N}}|u|^{2}dx.
\end{equation*}

\subsection{Several important inequalities and a property}

In what follows, we introduce several important inequalities.

\textbf{(1) Gagliardo-Nirenberg inequality of power type (\cite{W}):} Let $%
N\geq 1$ and $r\in (2,2^{\ast }]$. Then there exists a sharp constant $%
\overline{S}:=\overline{S}(N,r)>0$ such that
\begin{equation}
\Vert u\Vert _{r}\leq \overline{S}^{1/r}\Vert \nabla u\Vert _{2}^{\frac{%
N(r-2)}{2r}}\Vert u\Vert _{2}^{\frac{2N-r(N-2)}{2r}}.  \label{e2-1}
\end{equation}

\textbf{(2) Gagliardo-Nirenberg inequality of Hartree type (\cite{MS,Y}):}
Let $N\geq 1$ and $2_{\alpha}\leq p\leq2_{\alpha }^{\ast }$. Then there
exists a best constant $\overline{B}:=\overline{B}(N,\alpha,p)>0$ such that
\begin{equation}
\int_{\mathbb{R}^{N}}(I_{\alpha }\ast |u|^{p})|u|^{p}dx\leq \overline{B}%
\Vert \nabla u\Vert _{2}^{Np-N-\alpha }\Vert u\Vert _{2}^{N+\alpha -p(N-2)}.
\label{e2-3}
\end{equation}

\textbf{(3) Hardy-Littlewood-Sobolev inequality (\cite{L}):} For $0<\alpha<N$
and $1<p, q<\infty$, there exists a sharp constant $C(N,p,\alpha)>0$ such
that
\begin{equation}  \label{e2-4}
\left\|\int_{\mathbb{R}^{N}}\frac{u(y)}{|x-y|^{N-\alpha}}dy\right\|_{L^{q}}%
\leq C(N,p,\alpha)\|u\|_{L^{p}},
\end{equation}
where $\frac{1}{q}=\frac{1}{p}-\frac{\alpha}{N}$ and $p<\frac{N}{\alpha}$.

\begin{corollary}
\label{R1} By the semi-group identity for the Riesz potential \cite{L9}, for
$p\in [\frac{N+\alpha}{N}, \frac{N+\alpha}{N-2}] $, there exists $\widetilde{%
C}(N,\alpha)=\pi^{\frac{N-\alpha}{2}}\frac{\Gamma(\frac{\alpha}{2})} {%
\Gamma(N-\frac{N-\alpha}{2})}\left[\frac{\Gamma(\frac{N}{2})}{\Gamma(N)}%
\right]^{-\frac{\alpha}{N}}A(N,\alpha)>0$ such that
\begin{eqnarray*}
\int_{\mathbb{R}^{N}}( I_{\alpha }\ast | u| ^{p}) | u| ^{p}dx=\int_{\mathbb{R%
}^{N}}\left\vert I_{\alpha/2}\ast| u|^{p}\right\vert^{2}dx \leq \widetilde{C}%
\left( \int_{\mathbb{R}^{N}}| u| ^{\frac{2Np}{N+\alpha }}dx\right) ^{\frac{%
N+\alpha }{N}}.  \label{e2-9}
\end{eqnarray*}
\end{corollary}

Next, we show a splitting property for the functional $B$ and its derivative
$B^{\prime }$, which is similar to the Br\'{e}zis-Lieb type Lemma for
nonlocal nonlinearities \cite{A,AN}.

\begin{lemma}
\label{L2-4} Let $\{u_{n}\}$ be a bounded sequence in $H^{1}(\mathbb{R}^{N})$%
. If $u_{n}\rightarrow u$ a.e. in $\mathbb{R}^{N}$, then\newline
$(i)$ $B(u_{n}-u)=B(u_{n})-B(u)+o(1);$\, $(ii)$ $B^{\prime }\left(
u_{n}-u\right) =B^{\prime }(u_{n})-B^{\prime }(u)+o(1)$ in $H^{-1}(\mathbb{R}%
^{N}).$
\end{lemma}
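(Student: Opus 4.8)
The plan is to reduce both splitting properties to a single ``inner'' Brezis--Lieb fact for the power $|\cdot|^{p}$ in the Hardy--Littlewood--Sobolev exponent, and then to exploit the bilinear structure of $B$. Throughout set $m:=\frac{2N}{N+\alpha }$ and $r:=\frac{2Np}{N+\alpha }=mp$; since $\frac{N+\alpha }{N}<p<2_{\alpha }^{\ast }$ one checks $r\in (2,2^{\ast })$, so by Sobolev $\{u_{n}\}$ is bounded in $L^{r}$ and $\{|u_{n}|^{p}\}$ is bounded in $L^{m}$, while by Remark \ref{R1} the form $\langle f,g\rangle _{\alpha }:=\int_{\mathbb{R}^{N}}(I_{\alpha }\ast f)g\,dx$ is continuous on $L^{m}\times L^{m}$. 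Writing $v_{n}:=u_{n}-u$, the first step is the key claim
\begin{equation*}
|u_{n}|^{p}-|v_{n}|^{p}\longrightarrow |u|^{p}\quad \text{strongly in }L^{m}(\mathbb{R}^{N}).
\end{equation*}
This is a generalized Brezis--Lieb statement: using $p>1$ and the elementary inequality $\big||a+b|^{p}-|b|^{p}\big|\leq \varepsilon |b|^{p}+C_{\varepsilon }|a|^{p}$, one dominates $\big||v_{n}+u|^{p}-|v_{n}|^{p}-|u|^{p}\big|$ by $\varepsilon |v_{n}|^{p}+(C_{\varepsilon }+1)|u|^{p}$, applies dominated convergence to the truncated part $\big(\,\cdot\, -\varepsilon |v_{n}|^{p}\big)^{+}\leq (C_{\varepsilon }+1)|u|^{p}\in L^{m}$, uses the uniform bound $\||v_{n}|^{p}\|_{m}^{m}=\|v_{n}\|_{r}^{r}\leq M$, and finally lets $\varepsilon \to 0$.

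Granting the claim, part $(i)$ follows quickly. Put $a_{n}:=|u_{n}|^{p}$, $b_{n}:=|v_{n}|^{p}$ and $c:=|u|^{p}$. Since $b_{n}\to 0$ a.e.\ and is bounded in $L^{m}$ with $m>1$, we have $b_{n}\rightharpoonup 0$ in $L^{m}$, while $a_{n}-b_{n}\to c$ strongly by the claim. Then
\begin{equation*}
\langle a_{n},a_{n}\rangle _{\alpha }-\langle b_{n},b_{n}\rangle _{\alpha }=\langle a_{n}-b_{n},a_{n}-b_{n}\rangle _{\alpha }+2\langle a_{n}-b_{n},b_{n}\rangle _{\alpha },
\end{equation*}
where the first term tends to $\langle c,c\rangle _{\alpha }$ by continuity of the form, and in the second I write $\langle a_{n}-b_{n},b_{n}\rangle _{\alpha }=\langle c,b_{n}\rangle _{\alpha }+\langle a_{n}-b_{n}-c,b_{n}\rangle _{\alpha }$; the former vanishes because $\langle c,\cdot \rangle _{\alpha }$ is a fixed continuous functional on $L^{m}$ and $b_{n}\rightharpoonup 0$, the latter by the Cauchy--Schwarz form of HLS. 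Multiplying by $A(N,\alpha )$ gives $B(u_{n})-B(v_{n})\to B(u)$, which is $(i)$.

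For part $(ii)$ I first record $B^{\prime }(u)=2p\,(I_{\alpha }\ast |u|^{p})|u|^{p-2}u$, which by HLS lies in $L^{r^{\prime }}\hookrightarrow H^{-1}$, and then split the difference. With $G_{w}:=I_{\alpha }\ast |w|^{p}$ and $h_{w}:=|w|^{p-2}w$, decompose $G_{u_{n}}=G_{v_{n}}+P_{n}$ and $h_{u_{n}}=h_{v_{n}}+q_{n}$, where by the claim and HLS one has $P_{n}\to G_{u}$ in $L^{m^{\ast }}$ (with $m^{\ast }=\frac{2N}{N-\alpha }$), and by the local Brezis--Lieb lemma applied to $|\cdot |^{p-2}(\cdot )$ one has $q_{n}\to h_{u}$ in $L^{r/(p-1)}$. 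This yields
\begin{equation*}
G_{u_{n}}h_{u_{n}}-G_{v_{n}}h_{v_{n}}-G_{u}h_{u}=\big(P_{n}q_{n}-G_{u}h_{u}\big)+P_{n}h_{v_{n}}+G_{v_{n}}q_{n}.
\end{equation*}
The first bracket tends to $0$ in $L^{r^{\prime }}$ (strong$\times $strong, via $\frac{1}{m^{\ast }}+\frac{p-1}{r}=\frac{1}{r^{\prime }}$). Writing $P_{n}h_{v_{n}}=G_{u}h_{v_{n}}+(P_{n}-G_{u})h_{v_{n}}$ and $G_{v_{n}}q_{n}=G_{v_{n}}h_{u}+G_{v_{n}}(q_{n}-h_{u})$, the two remainders vanish in $L^{r^{\prime }}$ by H\"{o}lder, and $G_{u}h_{v_{n}}\to 0$ in $L^{r^{\prime }}$ as well: indeed $h_{v_{n}}\to 0$ a.e.\ and is bounded in $L^{r/(p-1)}$ while $G_{u}\in L^{m^{\ast }}$ is fixed, so splitting $\mathbb{R}^{N}$ into a large ball (where boundedness plus a.e.\ convergence gives weak convergence against the fixed $L$-function) and a tail (where the fixed function is small) forces the product to $0$.

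The main obstacle is the genuinely nonlocal cross term $G_{v_{n}}h_{u}=(I_{\alpha }\ast |v_{n}|^{p})\,h_{u}$, which does \emph{not} converge in $L^{r^{\prime }}$; here one must test against $\phi \in H^{1}$. Using self-adjointness of the Riesz potential,
\begin{equation*}
\big\langle G_{v_{n}}h_{u},\phi \big\rangle =\int_{\mathbb{R}^{N}}|v_{n}|^{p}\,\big(I_{\alpha }\ast (h_{u}\phi )\big)\,dx ,
\end{equation*}
and since $|v_{n}|^{p}\rightharpoonup 0$ in $L^{m}$ whose dual is $L^{m^{\prime }}=L^{m^{\ast }}$, it suffices to bound the right-hand side uniformly for $\|\phi \|_{H^{1}}\leq 1$. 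I would obtain this by showing that $\phi \mapsto I_{\alpha }\ast (h_{u}\phi )$ is \emph{compact} from $H^{1}$ into $L^{m^{\prime }}$: multiplication by the fixed function $h_{u}\in L^{r/(p-1)}$ maps $H^{1}$ compactly into $L^{m}$ (local compactness of $H^{1}\hookrightarrow L^{r}$ together with the decay of $h_{u}$ controlling the tails), and $I_{\alpha }\ast (\cdot )$ is bounded $L^{m}\to L^{m^{\prime }}$. Hence $\{I_{\alpha }\ast (h_{u}\phi ):\|\phi \|_{H^{1}}\leq 1\}$ is relatively compact in $L^{m^{\prime }}$, on which the weak null sequence $|v_{n}|^{p}$ tends to $0$ uniformly. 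This compactness step is where the nonlocality truly bites and is the crux of $(ii)$; everything else is bookkeeping with H\"{o}lder and the two Brezis--Lieb claims.
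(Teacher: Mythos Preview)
The paper does not actually prove Lemma~\ref{L2-4}; it merely states the result and cites \cite{A,AN} for the Brezis--Lieb type splitting for nonlocal nonlinearities. Your argument is essentially the standard one contained in those references, and it is correct.

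A few small comments on presentation. For part~$(i)$ your decomposition via the bilinear form $\langle\cdot,\cdot\rangle_\alpha$ and the key Brezis--Lieb claim $|u_n|^p-|v_n|^p\to|u|^p$ in $L^m$ is exactly the usual route and is fine as written. For part~$(ii)$, everything is sound except that your justification of $G_u h_{v_n}\to 0$ in $L^{r'}$ is phrased a bit loosely: ``boundedness plus a.e.\ convergence gives weak convergence against the fixed $L$-function'' suggests only weak convergence, not the strong $L^{r'}$ convergence you are claiming. The cleanest fix is Vitali: $G_u h_{v_n}\to 0$ a.e., and by H\"older
\[
\int_E |G_u h_{v_n}|^{r'}\,dx \le \Big(\int_E |G_u|^{m^\ast}\,dx\Big)^{r'/m^\ast}\,\|h_{v_n}\|_{L^{r/(p-1)}}^{r'}\le C\Big(\int_E |G_u|^{m^\ast}\,dx\Big)^{r'/m^\ast},
\]
so uniform integrability and tightness both follow from the single fixed function $|G_u|^{m^\ast}\in L^1$. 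This replaces your ball/tail sketch with a one-line argument. Your handling of the genuinely nonlocal cross term $G_{v_n}h_u$ via the compactness of $\phi\mapsto I_\alpha\ast(h_u\phi)$ from $H^1$ into $L^{m'}$ is the right idea and is the substantive step in $(ii)$; this is where the $H^{-1}$ topology (rather than $L^{r'}$) is truly needed, and your argument is correct.
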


\subsection{Decomposition of Pohozaev manifold $\mathcal{M}(c)$}

\begin{lemma}
\label{L2-1} Let $u$ be a weak solution to (\ref{e1-1}). Then $u$ satisfies
the Pohozaev identity
\begin{equation*}
Q(u):=A(u)-\frac{\gamma (Np-N-\alpha )}{2p}B(u)-\frac{\mu N(q-2)}{2q}C(u)=0.
\end{equation*}
\end{lemma}

\begin{proof}
The proof is similar to that of \cite[Proposition 3.1]{MS}, we omit it here.
\end{proof}

Following the idea of Soave \cite{S1} and Cingolani and Jeanjean \cite{CJ},
we will introduce a natural constraint manifold $\mathcal{M}(c)$ that
contains all the critical points of the functional $E_{\gamma ,\mu }$
restricted to $S(c)$. For each $u\in H^{1}(\mathbb{R}^{N})\backslash \{0\}$
and $t>0$, we set $u_{s}(x):=s^{N/2}u(sx)$. Then a direct calculation shows
that $\Vert u_{s}\Vert _{2}^{2}=\Vert u\Vert _{2}^{2},$ $A(u_{s})=s^{2}A(u),$
$B(u_{t})=s^{Np-N-\alpha }B(u)\quad $and $C(u_{s})=s^{N(q-2)/2}C(u).$ Define
the fibering map $s\in (0,\infty )\mapsto g_{u}(s):=E_{\gamma ,\mu }(u_{s})$
given by
\begin{equation*}
g_{u}(s)=\frac{s^{2}}{2}A(u)-\frac{\gamma s^{Np-N-\alpha }}{2p}B(u)-\frac{%
\mu s^{N(q-2)/2}}{q}C(u).  \label{e2-10}
\end{equation*}%
By calculating the first and second derivatives of $g_{u}(s),$ we have $%
\frac{d}{ds}E_{\gamma ,\mu }(u_{s})=g_{u}^{\prime }(s)=\frac{Q(u_{s})}{s},$.
Then by (\ref{e1-13}) one has
\begin{equation}
\mathcal{M}(c)=\left\{ u\in S(c)\text{ }|\text{ }Q(u)=0\right\} =\left\{
u\in S(c)\text{ }|\text{ }g_{u}^{\prime }(1)=0\right\} ,  \label{e2-19}
\end{equation}%
which appears as a natural constraint. We also recognize that for any $u\in
S(c)$, the dilated function $u_{s}(x)$ belongs to the Pohozaev manifold $%
\mathcal{M}(c)$ if and only if $s\in \mathbb{R}$ is a critical value of the
fibering map $s\in (0,\infty )\mapsto g_{u}(s)$, namely, $g_{u}^{\prime
}(s)=0$. Thus, it is natural split $\mathcal{M}(c)$ into three parts
corresponding to local minima, local maxima and points of inflection.
Following \cite{T}, we define
\begin{eqnarray*}
\mathcal{M}^{+}(c) &:&=\left\{ u\in S(c)\text{ }|\text{ }g_{u}^{\prime
}(1)=0,g_{u}^{\prime \prime }(1)>0\right\} ; \\
\mathcal{M}^{-}(c) &:&=\left\{ u\in S(c)\text{ }|\text{ }g_{u}^{\prime
}(1)=0,g_{u}^{\prime \prime }(1)<0\right\} ; \\
\mathcal{M}^{0}(c) &:&=\left\{ u\in S(c)\text{ }|\text{ }g_{u}^{\prime
}(1)=0,g_{u}^{\prime \prime }(1)=0\right\} .
\end{eqnarray*}%
Then it holds
\begin{eqnarray}
g_{u}^{\prime \prime }(1) &=&(N+\alpha +2-Np)A(u)-\frac{\mu
N(q-2)(Nq-2Np+2\alpha )}{4q}C(u)  \label{e2-7} \\
&=&-\frac{N(q-2)-4}{2}A(u)-\frac{\gamma (Np-N-\alpha )(2Np-qN-2\alpha )}{4p}%
B(u).  \label{e2-8}
\end{eqnarray}%
Furthermore, following the argument of Soave \cite{S1}, we have the
following lemma.

\begin{lemma}
\label{L2-2} If $\mathcal{M}^{0}(c)=\emptyset $, then $\mathcal{M}(c)$ is a
smooth submanifold of codimension $2$ of $H^{1}(\mathbb{R}^{N})$ and a
submanifold of codimension $1$ in $S(c).$
\end{lemma}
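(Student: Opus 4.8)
The plan is to exhibit $\mathcal{M}(c)$ as a regular level set. Recall that $S(c)$ is defined by the constraint $G(u):=D(u)-c=0$ and that $S(c)$ is itself a smooth submanifold of codimension $1$ in $H^1(\mathbb{R}^N)$ (the differential $G'(u)=2u\neq 0$ for $u\in S(c)$ since $c>0$). Now consider the map $F:H^1(\mathbb{R}^N)\setminus\{0\}\to\mathbb{R}^2$ given by $F(u)=(Q(u),G(u))$, where $Q$ is the Pohozaev functional introduced above. Both $Q$ and $G$ are $C^1$ by the smoothness of $A,B,C,D$ on $H^1(\mathbb{R}^N)$ (here one uses the Hardy--Littlewood--Sobolev inequality and the subcritical ranges of $p,q$ to justify differentiability of $B$ and $C$). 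By definition $\mathcal{M}(c)=F^{-1}(0,0)$. Thus by the regular value theorem it suffices to show that $F'(u)=(Q'(u),G'(u))$ is surjective onto $\mathbb{R}^2$ for every $u\in\mathcal{M}(c)$; equivalently, that $Q'(u)$ and $G'(u)$ are linearly independent in $H^{-1}(\mathbb{R}^N)$ for each such $u$.

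The key computation is to evaluate both differentials along the dilation direction. For fixed $u$, differentiating $t\mapsto u_t$ at $t=1$ gives a tangent vector $w:=\frac{d}{dt}\big|_{t=1}u_t\in H^1(\mathbb{R}^N)$, and since $\|u_t\|_2^2=\|u\|_2^2$ is constant in $t$ we get immediately $\langle G'(u),w\rangle=\frac{d}{dt}\big|_{t=1}D(u_t)=0$. On the other hand, $\langle Q'(u),w\rangle=\frac{d}{dt}\big|_{t=1}Q(u_t)$; using the scaling relations $A(u_t)=t^2A(u)$, $B(u_t)=t^{Np-N-\alpha}B(u)$, $C(u_t)=t^{N(q-2)/2}C(u)$ one checks that $\frac{d}{dt}\big|_{t=1}Q(u_t)=g_u''(1)$ (this is exactly the identity $\frac{d}{dt}E(u_t)=Q(u_t)/t$ differentiated once more, evaluated at $t=1$, bearing in mind $g_u'(1)=Q(u)=0$ on $\mathcal{M}(c)$). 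Since we are assuming $\mathcal{M}^0(c)=\emptyset$, we have $g_u''(1)\neq 0$, hence $\langle Q'(u),w\rangle\neq 0$ while $\langle G'(u),w\rangle=0$. Feeding in a second test vector, namely $u$ itself, gives $\langle G'(u),u\rangle=2D(u)=2c>0$, which together with the previous pairing forces the $2\times 2$ matrix of pairings against $\{w,u\}$ to be invertible. Therefore $Q'(u)$ and $G'(u)$ are linearly independent, $(0,0)$ is a regular value of $F$, and $\mathcal{M}(c)=F^{-1}(0,0)$ is a smooth submanifold of codimension $2$ in $H^1(\mathbb{R}^N)$. Since $\mathcal{M}(c)\subset S(c)$ and $S(c)$ has codimension $1$, the relative codimension of $\mathcal{M}(c)$ inside $S(c)$ is $1$.

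The main obstacle — though it is a mild one — is the bookkeeping that identifies $\langle Q'(u),w\rangle$ with $g_u''(1)$ and checks it is nonzero under the hypothesis; this is where the three equivalent expressions (2.18)--(2.19) for $g_u''(1)$ are convenient, since one may pick whichever form makes the sign or non-vanishing transparent in a given regime. One should also be slightly careful that $w=\frac{d}{dt}\big|_{t=1}u_t$ genuinely lies in $H^1(\mathbb{R}^N)$ and depends continuously on $u$, so that the surjectivity of $F'(u)$ is not merely pointwise but gives the submanifold structure via the implicit function theorem; this follows from the explicit formula $w=\frac{N}{2}u+x\cdot\nabla u$ together with the fact that we only ever work on the subset where $A(u),B(u),C(u)$ are finite and positive. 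No deep analytic input beyond the $C^1$ regularity of $E_{\gamma,\mu}$ (already recorded in the excerpt) is needed. $\blacksquare$
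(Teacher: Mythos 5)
Your overall strategy (realize $\mathcal{M}(c)$ as the zero set of $F=(Q,G)$ with $G(u)=D(u)-c$ and verify that $(Q'(u),G'(u))$ is onto $\mathbb{R}^{2}$ at every point of the level set) is the right one, and most of the bookkeeping is correct: $\langle G'(u),u\rangle=2c>0$, and the identity $\frac{d}{dt}\big|_{t=1}Q(u_t)=g_u'(1)+g_u''(1)=g_u''(1)\neq 0$ on $\mathcal{M}(c)\setminus\mathcal{M}^{0}(c)$ is exactly the non-degeneracy you need. The gap is in the one step you flagged yourself and then dismissed: the vector $w=\frac{d}{dt}\big|_{t=1}u_t=\frac{N}{2}u+x\cdot\nabla u$ need \emph{not} belong to $H^{1}(\mathbb{R}^{N})$ for a general $u\in\mathcal{M}(c)$. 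The curve $t\mapsto u_t$ is continuous in $H^{1}$ but is not differentiable there without additional decay of $u$ (one needs $x\cdot\nabla u\in L^{2}$, which is a genuine extra hypothesis), and finiteness of $A(u),B(u),C(u)$ does not supply it. Consequently you cannot legitimately pair $Q'(u)$ and $G'(u)$ against $w$, and the invertibility of your $2\times 2$ matrix is not established as written. The scalar function $t\mapsto Q(u_t)$ is of course smooth (it is an explicit combination of powers of $t$), but without $H^{1}$-differentiability of the path you cannot convert its $t$-derivative into the pairing $\langle Q'(u),w\rangle$ via the chain rule.

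The repair is the one the paper implicitly invokes by citing Soave's argument, and it avoids $w$ entirely: argue by contradiction. If $(Q'(u),G'(u))$ is not onto, then since $G'(u)=2u\neq 0$ there is $\nu\in\mathbb{R}$ with $Q'(u)=\nu\,G'(u)$ in $H^{-1}(\mathbb{R}^{N})$, which says precisely that $u$ is a weak solution of
\begin{equation*}
-\Delta u+(-\nu)u=\tfrac{\gamma (Np-N-\alpha )}{2}(I_{\alpha }\ast |u|^{p})|u|^{p-2}u+\tfrac{\mu N(q-2)}{4}|u|^{q-2}u .
\end{equation*}
Now apply Lemma \ref{L2-1} (the Pohozaev identity, which \emph{is} valid for $H^{1}$ weak solutions because it is proved through elliptic regularity rather than by differentiating the dilation path) to this equation with the rescaled coefficients $\tilde{\gamma}=\frac{\gamma(Np-N-\alpha)}{2}$, $\tilde{\mu}=\frac{\mu N(q-2)}{4}$. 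Subtracting the resulting identity from $Q(u)=0$ yields, after the same algebra that produces the third expression for $g_u''(1)$ following (\ref{e2-8}), exactly $g_u''(1)=0$, i.e. $u\in\mathcal{M}^{0}(c)$, contradicting the hypothesis. With that substitution your conclusion (codimension $2$ in $H^{1}(\mathbb{R}^{N})$, hence codimension $1$ in $S(c)$) follows as you state.
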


\subsection{A general minimax theorem}

In this subsection, we shall give a general minimax theorem based on the
homotopy stable family to establish the existence of a Palais--Smale
sequence.

\begin{definition}
\label{D2-1}\cite[Definition 3.1]{G} Let $\Theta $ be a closed subset of a
metric space $X\subset H^{1}(\mathbb{R}^{N})$. We say that a class $\mathcal{%
F}$ of compact subsets of $X$ is a homotopy-stable family with closed
boundary $\mathcal{B}$ provided that

\begin{itemize}
\item[$(a)$] every set in $\mathcal{F}$ contains $\Theta$;

\item[$(b)$] for any set $H\in\mathcal{F}$ and any $\eta\in C([0,1]\times
X,X)$ satisfying $\eta(s,x)=x$ for all $(s,x)\in (\{0\}\times
X)\cup([0,1]\times \Theta)$, we have that $\eta(\{1\}\times H)\in\mathcal{F}$%
.
\end{itemize}
\end{definition}

\begin{lemma}
\label{L2-6}\cite[Theorem 3.2]{G} Let $\varphi $ be a $C^{1}$-functional on
a complete connected $C^{1}$-Finsler manifold $X$ (without boundary) and
consider a homotopy stable family $\mathcal{F}$ of compact subsets of $X$
with a closed boundary $\mathcal{B}$. Set
\begin{equation*}
\bar{c}=\bar{c}(\varphi ,\mathcal{F})=\inf_{H\in \mathcal{F}}\max_{u\in
H}\varphi (u)
\end{equation*}%
and suppose that $\sup \varphi (\Theta )<\bar{c}.$ Then for any sequence of
sets $\{H_{n}\}$ in $\mathcal{F}$ such that $\lim_{n\rightarrow \infty
}\sup_{H_{n}}\varphi =\bar{c}$, there exists a sequence $\{u_{n}\}$ in $X$
such that\newline
$(i)$ $\lim_{n\rightarrow \infty }\varphi (u_{n})=\bar{c};$ $(ii)$ $%
\lim_{n\rightarrow \infty }\left\Vert \varphi ^{\prime }(u_{n})\right\Vert
=0;$ $(iii)$ $\lim_{n\rightarrow \infty }\text{dist}(u_{n},H_{n})=0.$\newline
Furthermore, if $\varphi^{\prime} $ is uniformly continuous, then $u_{n}$
can be chosen to be in $H_{n}$ for each $n$.
\end{lemma}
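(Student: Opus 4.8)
The plan is to establish this statement as a special case of Ghoussoub's minimax principle \cite{G}, whose proof rests on two essentially independent ingredients that are then combined by a contradiction argument: a quantitative deformation lemma on the $C^{1}$-Finsler manifold $X$, and Ekeland's variational principle applied to the ``set functional'' $I(H):=\max_{u\in H}\varphi(u)$ over $\mathcal{F}$. First I would argue by contradiction. Suppose the conclusion fails; then, passing to a subsequence, there exist $\varepsilon,\delta>0$ such that for every $n$ one has $\|\varphi'(u)\|\ge\varepsilon$ whenever $u$ lies in the tube $\{u\in X:\ |\varphi(u)-c|\le 2\varepsilon,\ \mathrm{dist}(u,H_{n})\le 2\delta\}$. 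Shrinking $\varepsilon$ if necessary, I would use the hypothesis $\sup\varphi(\Theta)<c$ to ensure that $\Theta$ stays uniformly below the level $c-2\varepsilon$, so that $\Theta$ lies outside this tube.

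\textbf{Deformation.} On the regular set where $\varphi'\neq0$ I would construct a locally Lipschitz pseudo-gradient vector field $V$ for $\varphi$ on the paracompact Finsler manifold $X$, normalized so that $\langle d\varphi(u),V(u)\rangle\ge\tfrac12\|\varphi'(u)\|$ and $\|V(u)\|\le1$. Introducing a Lipschitz cutoff $\chi$ that equals $1$ on the tube of radius $\delta$ at level $|\varphi-c|\le\varepsilon$ and vanishes outside the tube of radius $2\delta$ at level $|\varphi-c|\ge2\varepsilon$, I would solve the flow $\partial_{s}\sigma=-\chi(\sigma)V(\sigma)$, $\sigma(0,u)=u$, producing $\eta(s,u)=\sigma(s,u)$. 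By construction $\eta(0,\cdot)=\mathrm{id}$ and $\eta(s,\cdot)=\mathrm{id}$ on $\Theta$ (the cutoff vanishes there), so $\eta$ is admissible in the sense of Definition \ref{D2-1}$(b)$; moreover, running the flow for a suitable time $s_{0}$ lowers $\varphi$ below $c-\varepsilon$ on the part of $H_{n}$ where $\varphi$ was within $\varepsilon$ of $c$, while leaving the rest unchanged.

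\textbf{Ekeland on sets.} In parallel I would equip $\mathcal{F}$ (or a complete metric model of it built from parametrizations of its members) with a complete metric and apply Ekeland's variational principle to $I$ at the almost-minimizer $H_{n}$, obtaining a set $\widetilde{H}_{n}\in\mathcal{F}$ with $I(\widetilde{H}_{n})\le c+\varepsilon_{n}$, close to $H_{n}$, and minimal up to slope $\varepsilon_{n}\to0$. The homotopy-stability of $\mathcal{F}$ guarantees $\eta(1,\widetilde{H}_{n})\in\mathcal{F}$; but the deformation strictly lowers $\max_{\widetilde{H}_{n}}\varphi$ below $c$, contradicting the Ekeland near-minimality of $\widetilde{H}_{n}$. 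This contradiction shows that the tube must contain points with $\|\varphi'\|<\varepsilon$; choosing $\varepsilon=\varepsilon_{n}\to0$ and $\delta=\delta_{n}\to0$ then yields $u_{n}$ satisfying $(i)$--$(iii)$.

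\textbf{Main obstacle and the refinement.} The hardest part is the construction of the deformation intrinsically on the Finsler manifold: one must produce the pseudo-gradient field and its flow using the $C^{1}$-Finsler structure (paracompactness, partitions of unity, and completeness of $X$ for global existence of the flow), and one must keep the flow admissible, i.e. fixing $\Theta$ and reducing to the identity at $s=0$, so that homotopy-stability can be invoked. The final assertion, that $u_{n}$ can be taken in $H_{n}$ when $d\varphi$ is uniformly continuous, requires a sharper localization: uniform continuity makes the pseudo-gradient estimates uniform along the flow, so the near-critical point produced by the deformation can be pulled back exactly onto the maximizing set of $H_{n}$ rather than merely into a $\delta_{n}$-neighborhood, at the cost of only an $o(1)$ change in level and slope.
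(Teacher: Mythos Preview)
The paper does not prove this lemma at all: it is stated as a direct citation of \cite[Theorem 3.2]{G} and used as a black box throughout Sections~3--5. So there is no ``paper's own proof'' to compare against; your sketch is simply supplying what the authors chose to quote from Ghoussoub's monograph.

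That said, your outline is the standard architecture of Ghoussoub's argument (pseudo-gradient deformation on a $C^{1}$-Finsler manifold combined with Ekeland's principle on the set functional $I(H)=\max_{H}\varphi$), and it is broadly correct as a roadmap. One point deserves more care: the way you phrase the Ekeland step is slightly informal, since $\mathcal{F}$ itself is typically not a complete metric space under Hausdorff distance, and Ghoussoub's actual argument works instead with a perturbed functional or with the metric space of continuous maps from a fixed parameter space into $X$; you gesture at this with ``a complete metric model of it built from parametrizations,'' but in a full proof this is where the technical weight lies. Also, your explanation of the final refinement (choosing $u_n\in H_n$ under uniform continuity of $d\varphi$) is vague: the mechanism is not that one ``pulls back'' a near-critical point onto $H_n$, but rather that uniform continuity allows one to compare $\|\varphi'\|$ at a point of $H_n$ realizing the max with $\|\varphi'\|$ at the nearby Ekeland point, so the maximizer itself already has small derivative. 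For the purposes of this paper, however, none of this matters, since the result is simply imported from \cite{G}.
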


\subsection{Compactness lemma for Palais-Smale sequences}

\begin{lemma}
\label{L2-9} Let $\{u_{n}\}\subset \mathcal{M}(c)\cap H_{r}^{1}(\mathbb{R}%
^{N})$ be a bounded Palais-Smale sequence for $E_{\gamma ,\mu }(u)|_{S(c)}$
at level $\overline{\beta }\neq 0$. Then, up to a subsequence, $%
u_{n}\rightarrow u$ strongly in $H_{r}^{1}(\mathbb{R}^{N})$, and $u\in S(c)$
is a solution of $(P_{c})$ for some $\lambda >0$.
\end{lemma}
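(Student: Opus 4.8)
\textbf{Proof proposal for Lemma \ref{L2-9}.}

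The plan is to run the classical concentration-compactness / Lagrange-multiplier argument in the radial setting, where compactness of the embedding $H^1_r(\mathbb{R}^N)\hookrightarrow L^t(\mathbb{R}^N)$ for $t\in(2,2^\ast)$ is available. First I would record that $\{u_n\}$ being a bounded Palais--Smale sequence for $E|_{S(c)}$ yields, via the standard Lagrange multiplier rule on the constraint $S(c)$, a sequence $\{\lambda_n\}\subset\mathbb{R}$ with
\begin{equation*}
-\Delta u_n+\lambda_n u_n-\gamma(I_\alpha\ast|u_n|^p)|u_n|^{p-2}u_n-\mu|u_n|^{q-2}u_n=o(1)\quad\text{in }H^{-1}(\mathbb{R}^N).
\end{equation*}
Testing this relation against $u_n$ and using the boundedness of $\{u_n\}$ together with $\|u_n\|_2^2=c$, one solves for $\lambda_n$ and finds $\{\lambda_n\}$ bounded; hence, up to a subsequence, $\lambda_n\to\lambda$ for some $\lambda\in\mathbb{R}$. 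Passing to a further subsequence, $u_n\rightharpoonup u$ weakly in $H^1_r(\mathbb{R}^N)$, $u_n\to u$ strongly in $L^t(\mathbb{R}^N)$ for every $t\in(2,2^\ast)$ (by Strauss' radial compactness), and $u_n\to u$ a.e.

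Next I would show $u\neq 0$. The Pohozaev constraint $u_n\in\mathcal{M}(c)$, i.e. $Q(u_n)=0$, combined with $E(u_n)\to\overline{\beta}\neq 0$, forces $B(u_n)$ and $C(u_n)$ to stay away from zero (since if both tended to zero then $Q(u_n)=0$ would give $A(u_n)\to 0$ and hence $E(u_n)\to 0$, contradicting $\overline{\beta}\neq 0$); because the nonlocal term $B$ is controlled by $\|u_n\|_{2Np/(N+\alpha)}^{2p}$ via the Hardy--Littlewood--Sobolev inequality (\ref{e2-9}) and $C(u_n)=\|u_n\|_q^q$, and both these $L^t$-norms converge to those of $u$ by compactness, we get $B(u)\neq 0$ or $C(u)\neq 0$, so $u\not\equiv 0$. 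Then passing to the weak limit in the Euler--Lagrange equation, using Lemma \ref{L2-4}$(ii)$ for the nonlocal term and the strong $L^q$-convergence for the power term, shows that $u$ solves
\begin{equation*}
-\Delta u+\lambda u=\gamma(I_\alpha\ast|u|^p)|u|^{p-2}u+\mu|u|^{q-2}u\quad\text{in }\mathbb{R}^N.
\end{equation*}

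The main work is upgrading weak to strong convergence and simultaneously pinning down $\lambda>0$. Writing $v_n:=u_n-u$, I would subtract the limit equation from the equation for $u_n$ and test against $v_n$; using the Brezis--Lieb splitting of Lemma \ref{L2-4}$(i)$--$(ii)$ for $B$ and the Brezis--Lieb lemma for the $L^q$-term (whose remainder vanishes by strong $L^q$-convergence, so in fact $C(v_n)\to 0$ and $B(v_n)\to 0$ directly), one obtains $\|\nabla v_n\|_2^2+\lambda_n\|v_n\|_2^2=o(1)$. If $\lambda>0$ this immediately gives $v_n\to 0$ in $H^1$, completing the proof with $u\in S(c)$. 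To see $\lambda>0$: from the limit equation test against $u$ to get $A(u)+\lambda\|u\|_2^2=\gamma B(u)\cdot(\text{const})+\mu C(u)\cdot(\text{const})$, and combine with the Pohozaev identity (\ref{e2-6}) satisfied by $u$ (Lemma \ref{L2-1}); eliminating one of $B(u),C(u)$ expresses $\lambda\|u\|_2^2$ as a positive combination of $A(u)$ and one nonlinear term under the exponent ranges in force, while the sign of $\overline{\beta}\neq0$ rules out the degenerate possibility $u=0$. I expect the sign analysis of $\lambda$ to be the delicate point, since it depends on which of the sub/supercritical regimes one is in; the clean way is to note that $\lambda=0$ would make $u$ a nontrivial solution of an equation with no $L^2$-term, contradicting its Pohozaev identity together with $\|u\|_2^2\le c$ — or, more robustly, to argue $\liminf\lambda_n>0$ directly from the identity for $\lambda_n$, the fact that $B(u_n),C(u_n)$ are bounded below, and the sign of $\gamma,\mu$ in the case at hand, so that the Brezis--Lieb step closes without ever needing $u\in S(c)$ a priori.
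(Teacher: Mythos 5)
Your overall architecture matches the paper's: radial compactness of $H_{r}^{1}\hookrightarrow L^{s}$, the Lagrange multiplier identity, nonvanishing of $u$ from $Q(u_{n})=0$ together with $\overline{\beta }\neq 0$, passage to the limit in the equation via Lemma \ref{L2-4}, and the final step testing the difference of the equations against $u_{n}-u$ so that strong convergence follows once $\lambda >0$ is known. All of that is sound and is exactly what the paper does.

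The genuine gap is in the step you yourself flag as delicate: proving $\lambda >0$. Your claim that eliminating one of $B(u),C(u)$ "expresses $\lambda \Vert u\Vert _{2}^{2}$ as a positive combination of $A(u)$ and one nonlinear term" is only true in the non-competing case $\gamma >0,\mu >0$, where $\lambda _{n}c=\gamma \frac{N+\alpha -(N-2)p}{2p}B(u_{n})+\mu \frac{2N-(N-2)q}{2q}C(u_{n})+o(1)$ has both coefficients positive and the conclusion is immediate. In the competing cases $\gamma \mu <0$ the resulting combination has one term of each sign, so neither "lower bounds on $B(u_{n}),C(u_{n})$ plus the sign of $\gamma ,\mu $" nor any soft argument closes the step. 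Your fallback ($\lambda =0$ would contradict the Pohozaev identity of $u$) is not substantiated and in any case does not exclude $\lambda <0$. What the paper actually does in Cases $(b)$ and $(c)$ is quantitative: it uses $Q(u)=0$ together with the Gagliardo--Nirenberg inequalities (\ref{e2-1}), (\ref{e2-3}) and $\Vert u\Vert _{2}^{2}\leq c$ to derive an explicit lower bound on $A(u)$ (see (\ref{e2-15}) and (\ref{e2-17})), rewrites $\lambda \Vert u\Vert _{2}^{2}$ as a function $f(A(u))$ with one positive and one negative term (see (\ref{e2-16}) and (\ref{e2-18})), and then shows $f>0$ on the relevant range only under the smallness conditions on $|\gamma |c^{\cdots }$ or $|\mu |c^{\cdots }$ (and, when $2Np-Nq-2\alpha >0$ or $2Np-Nq-\alpha q>0$, an additional upper bound $A(u)\leq D$ coming from the boundedness of the sequence, with $|\gamma |$ or $|\mu |c^{\cdots }$ taken small enough to fit the two bounds together). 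This case analysis is where the hypotheses of Theorems \ref{t4} and \ref{t6} actually enter, and it cannot be replaced by the sign-only reasoning in your sketch; without it the lemma as used in the competing regimes is not proved.
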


\begin{proof}
By the principle of symmetric criticality \cite[Theorem 1.28]{W1}, the
solutions in radially symmetric function space $H_{r}^{1}(\mathbb{R}^{N})$
are also those in the space $H^{1}(\mathbb{R}^{N})$.

\textbf{Step 1. The boundedness of $\lambda_{n}$.} Since $\left\{
u_{n}\right\} \subset \mathcal{M}(c)$ is bounded and the embedding $%
H_{r}^{1}(\mathbb{R}^{N})\hookrightarrow L^{r}(\mathbb{R}^{N})$ is compact
for $r\in (2,2^{\ast })$, there exists $u\in H_{r}^{1}(\mathbb{R}^{N})$ such
that $u_{n}\rightharpoonup u$ weakly in $H_{r}^{1}(\mathbb{R}^{N})$, $%
u_{n}\rightarrow u$ strongly in $L^{s}(\mathbb{R}^{N})$ for $s\in (2,2^{\ast
})$, and a.e. in $\mathbb{R}^{N}$. By the Lagrange multipliers rule, there
exists $\lambda _{n}\in \mathbb{R}$ such that for every $\varphi \in H^{1}(%
\mathbb{R}^{N})$,
\begin{equation}
\int_{\mathbb{R}^{N}}\nabla u_{n}\nabla \varphi dx+\lambda _{n}\int_{\mathbb{%
R}^{N}}u_{n}\varphi dx-\gamma \iint\limits_{\mathbb{R}^{N}\times \mathbb{R}%
^{N}}\frac{|u_{n}|^{2p-2}u_{n}}{|x-y|^{N-\alpha }}\varphi dxdy-\mu \int_{%
\mathbb{R}^{N}}|u_{n}|^{q-2}u_{n}\varphi dx=o(1)\Vert \varphi \Vert .
\label{e2-11}
\end{equation}%
In particular, we have
\begin{equation*}
\lambda _{n}c=-A(u_{n})+\gamma B(u_{n})+\mu C(u_{n})+o(1),
\end{equation*}%
which implies that $\{\lambda _{n}\}$ is bounded, and so we can assume that $%
\lambda _{n}\rightarrow \lambda \in \mathbb{R}$. Since $Q(u_{n})=0$, we have
\begin{equation*}
\lambda _{n}c=\gamma \left( 1-\frac{Np-N-\alpha }{2p}\right) B(u_{n})+\mu
\left( 1-\frac{N(q-2)}{2q}\right) C(u_{n})+o(1).  \label{e2-14}
\end{equation*}

\textbf{Step 2. The sign of $\lambda$.} We now divide the arguments into
several cases as follows.

Case $(a):\gamma <0,\mu >0, \overline{p}<p\leq2_{\alpha }^{\ast }$ and $%
\overline{q}<q<2^{\ast }$. Since $Q(u_{n})=0$, we have
\begin{equation*}
A(u_{n})=\frac{\gamma (Np-N-\alpha )}{2p}B(u_{n})+\frac{\mu N(q-2)}{2q}%
C(u_{n})\leq \frac{\mu N(q-2)}{2q}C(u_{n}).
\end{equation*}%
Using this, together with (\ref{e2-1}) and the fact of $u_{n}\rightarrow u$
strongly in $L^{r}(\mathbb{R}^{N})$ for $r\in (2,2^{\ast })$, gives%
\begin{eqnarray}
A(u)\leq \liminf_{n\rightarrow\infty}A(u_{n})&\leq&\frac{\mu N(q-2)}{2q}%
\liminf_{n\rightarrow\infty}C(u_{n})=\frac{\mu N(q-2)}{2q}C(u)  \notag \\
&\leq& \frac{\mu N(q-2)\overline{S}}{2q}\Vert u\Vert _{2}^{\frac{2N-q(N-2)}{2%
}}A(u)^{\frac{N(q-2)}{4}}.  \label{e2-13}
\end{eqnarray}%
Since $E_{\gamma ,\mu }(u_{n})\rightarrow \overline{\beta }\neq 0$, we have $%
u\neq 0.$ Hence, using (\ref{e2-13}) and the fact of $\Vert u\Vert
_{2}^{2}\leq c$ obtained by the weak lower semi-continuity, we deduce that
\begin{equation}
A(u)\geq \left( \frac{2q}{\mu N(q-2)\overline{S}c^{\frac{2N-q(N-2)}{4}}}%
\right) ^{\frac{4}{N(q-2)-4}}.  \label{e2-15}
\end{equation}

Since $\lambda _{n}\rightarrow \lambda $ and $u_{n}\rightarrow u\neq 0$
weakly in $H_{r}^{1}(\mathbb{R}^{N})$, and strongly in $L^{q}(\mathbb{R}%
^{N}) $, (\ref{e2-11}) implies that $u$ is a weak solution to (\ref{e1-1}).
By Lemma \ref{L2-1}, we infer that $Q(u)=0.$ Moreover, we have
\begin{eqnarray}
\lambda \Vert u\Vert _{2}^{2} &\geq &\frac{2N-q(N-2)}{N(q-2)}A(u)+\frac{%
\gamma \overline{B}(Nq+\alpha q-2Np)}{Np(q-2)}c^{\frac{N+\alpha -p(N-2)}{2}%
}A(u)^{\frac{Np-N-\alpha }{2}}  \notag \\
&\geq &A(u)^{\frac{Np-N-\alpha }{2}}\left[ \frac{2N-q(N-2)}{N(q-2)}\left(
\frac{2q}{\mu N(q-2)\overline{S}c^{\frac{2N-(N-2)q}{4}}}\right) ^{\frac{%
2(N+\alpha +2-Np)}{(q-2)N-4}}\right.  \notag \\
&&\left. +\frac{\gamma \overline{B}(Nq+\alpha q-2Np)}{Np(q-2)c^{-\frac{%
N+\alpha -p(N-2)}{2}}}\right] .  \label{e2-16}
\end{eqnarray}%
Case $(a-i):Nq+\alpha q-2Np\leq 0$. It is clear that $\lambda >0,$ by (\ref%
{e2-15})-(\ref{e2-16}).\newline
Case $(a-ii):$ $Nq+\alpha q-2Np>0$. Since $\{u_{n}\}$ is a bounded sequence
in $H^{1}(\mathbb{R}^{N})$, there exists a constant $D_{1}>0$ such that $%
A(u)\leq \lim_{n\rightarrow \infty }A(u_{n})\leq D_{1}$. Let us consider
\begin{equation*}
f(s):=\frac{2N-q(N-2)}{N(q-2)}s+\frac{\gamma \overline{B}(Nq+\alpha q-2Np)}{%
Np(q-2)}c^{\frac{N+\alpha -p(N-2)}{2}}s^{\frac{Np-N-\alpha }{2}}\text{ for }%
s>0.
\end{equation*}%
A direct calculation shows that $f(A(u))>0$ for
\begin{equation*}
0<A(u)<\left( \frac{(2N-q(N-2))p}{|\gamma |\overline{B}(Nq+\alpha q-2Np)c^{%
\frac{N+\alpha -p(N-2)}{2}}}\right) ^{\frac{2}{Np-N-\alpha -2}}.
\end{equation*}%
Taking $|\gamma |c^{\frac{N+\alpha -p(N-2)}{2}}$ small enough such that
\begin{equation*}
\left( \frac{(2N-q(N-2))p}{|\gamma \overline{B}|(Nq+\alpha q-2Np)c^{\frac{%
N+\alpha -p(N-2)}{2}}}\right) ^{\frac{2}{Np-N-\alpha -2}}>D_{1}>\left( \frac{%
2q}{\mu N(q-2)\overline{S}c^{\frac{2N-q(N-2)}{4}}}\right) ^{\frac{4}{N(q-2)-4%
}}.
\end{equation*}%
Thus, one has
\begin{equation*}
\lambda \Vert u\Vert _{2}^{2}\geq \frac{2N-q(N-2)}{N(q-2)}A(u)+\frac{\gamma
\overline{B}(Nq+\alpha q-2Np)}{Np(q-2)}c^{\frac{N+\alpha -p(N-2)}{2}}A(u)^{%
\frac{Np-N-\alpha }{2}}>0,
\end{equation*}%
which shows that $\lambda >0$.

Case $(b):\gamma >0,\mu <0.$ Since $Q(u_{n})=o(1)$, it holds
\begin{equation*}
A(u_{n})\leq \frac{\gamma (Np-N-\alpha )}{2p}B(u_{n})+o(1).
\end{equation*}%
Using this, together with (\ref{e2-3}) and the fact of $B(u_{n})\rightarrow
B(u)$ as $n\rightarrow \infty $, leads to
\begin{equation*}
A(u)\leq \frac{\gamma (Np-N-\alpha )}{2p}B(u)\leq \frac{\gamma \overline{B}%
(Np-N-\alpha )}{2p}\Vert u\Vert _{2}^{N+\alpha -(N-2)p}A(u)^{\frac{%
Np-N-\alpha }{2}}.
\end{equation*}%
As in Case $(a)$, we have $u\neq 0.$ Thus, using that $\Vert u\Vert
_{2}^{2}\leq c$ obtained by the weak lower semi-continuity, we deduce that
\begin{equation}
A(u)\geq \left( \frac{2p}{\gamma \overline{B}(Np-N-\alpha )c^{\frac{N+\alpha
-(N-2)p}{2}}}\right) ^{\frac{2}{Np-N-\alpha -2}}.  \label{e2-17}
\end{equation}%
Since $\lambda _{n}\rightarrow \lambda $ and $u_{n}\rightarrow u\neq 0$
weakly in $H_{r}^{1}(\mathbb{R}^{N})$, and strongly in $L^{q}(\mathbb{R}%
^{N}) $, (\ref{e2-11}) indicates that $u$ is a weak solution to (\ref{e1-1}%
). It follows from Lemma \ref{L2-1} that $Q(u)=0$ and
\begin{eqnarray}
\lambda \Vert u\Vert _{2}^{2} &\geq &\frac{N+\alpha -p(N-2)}{Np-N-\alpha }%
A(u)+\frac{\mu \overline{S}(2Np-Nq-\alpha q)}{q(Np-N-\alpha )}c^{\frac{%
2N-q(N-2)}{4}}A(u)^{\frac{N(q-2)}{4}}  \notag \\
&\geq &A(u)^{\frac{N(q-2)}{4}}\left[ \frac{N+\alpha -p(N-2)}{Np-N-\alpha }%
\left( \frac{2p}{\gamma \overline{B}(Np-N-\alpha )c^{\frac{N+\alpha -p(N-2)}{%
2}}}\right) ^{\frac{4-N(q-2)}{2(Np-N-\alpha -2)}}\right.  \notag \\
&&\left. +\frac{\mu \overline{S}(2Np-Nq-\alpha q)}{q(Np-N-\alpha )c^{-\frac{%
2N-q(N-2)}{4}}}\right] .  \label{e2-18}
\end{eqnarray}%
\newline
Case $(b-i):2Np-Nq-\alpha q\leq 0.$ By (\ref{e2-17})-(\ref{e2-18}) it is
easy to get $\lambda >0$.\newline
Case $(b-ii):2Np-Nq-\alpha q>0$. Note that $\{u_{n}\}$ is a bounded sequence
in $H^{1}(\mathbb{R}^{N})$. Then there exists a constant $D_{2}>0$ such that
$A(u)\leq \lim_{n\rightarrow \infty }A(u_{n})\leq D_{2}$. Now, we consider
the function
\begin{equation*}
f(s):=\frac{N+\alpha -p(N-2)}{Np-N-\alpha }s+\frac{\mu \overline{S}%
(2Np-Nq-\alpha q)}{q(Np-N-\alpha )}c^{\frac{2N-q(N-2)}{4}}s^{\frac{N(q-2)}{4}%
}\text{ for }s>0.
\end{equation*}%
It is not difficult to verify that $f(A(u))>0$ for
\begin{equation*}
0<A(u)<\left( \frac{q(N+\alpha -p(N-2))}{|\mu |\overline{S}(2Np-Nq-\alpha
q)c^{\frac{2N-q(N-2)}{4}}}\right) ^{\frac{4}{N(q-2)-4}}.
\end{equation*}%
Let us take $|\mu |c^{\frac{2N-(N-2)q}{4}}$ small enough such that
\begin{equation*}
\left( \frac{q(N+\alpha -p(N-2))}{|\mu |\overline{S}(2Np-Nq-\alpha q)c^{%
\frac{2N-(N-2)q}{4}}}\right) ^{\frac{4}{N(q-2)-4}}>D_{2}>\left( \frac{2\Vert
Q_{p}\Vert _{2}^{2p-2}}{\gamma (Np-N-\alpha )c^{\frac{N+\alpha -(N-2)p}{2}}}%
\right) ^{\frac{2}{Np-N-\alpha -2}}.
\end{equation*}%
Thus, one has
\begin{equation*}
\lambda \Vert u\Vert _{2}^{2}\geq \frac{N+\alpha -p(N-2)}{Np-N-\alpha }A(u)+%
\frac{\mu \overline{S}(2Np-Nq-\alpha q)}{q(Np-N-\alpha )}c^{\frac{2N-q(N-2)}{%
4}}A(u)^{\frac{N(q-2)}{4}}>0,
\end{equation*}%
which indicates that $\lambda >0$.

\textbf{Step 3. $H_{r}^{1}$-strong convergence.} By (\ref{e2-11}), we have
\begin{equation}
E_{\gamma ,\mu }^{\prime }( u) [\varphi ]+\lambda \int_{\mathbb{R}%
^{N}}u\varphi dx=0  \label{e2-12}
\end{equation}
for every $\varphi \in H^{1}(\mathbb{R}^{N})$. Taking $\varphi =u_{n}-u$ in (%
\ref{e2-11}) and (\ref{e2-12}), we get
\begin{equation*}
( E_{\gamma ,\mu }^{\prime }( u_{n}) -E_{\gamma ,\mu }^{\prime }( u) )
[u_{n}-u]+\lambda \int_{\mathbb{R}^{N}}| u_{n}-u| ^{2}dx=o( 1) .
\end{equation*}%
Since $u_{n}\rightarrow u$ in $L^{r}(\mathbb{R}^{N})$ for $r\in ( 2,2^{\ast
}) $, we infer that $B( u_{n}-u) =o(1)$ for $p\in
(2_{\alpha},2^{*}_{\alpha}) $ and $C( u_{n}-u) =o(1)$ for $q\in(2,2^{*})$.
Then we obtain that
\begin{equation*}
\int_{\mathbb{R}^{N}}\left(| \nabla ( u_{n}-u) | ^{2}+\lambda| u_{n}-u|
^{2}\right) dx=o( 1) ,
\end{equation*}%
which implies that $u_{n}\rightarrow u$ in $H_{r}^{1}(\mathbb{R}^{N}),$
since $\lambda >0.$

In addition, for the case of $\gamma <0,\mu >0$ and $2_{\alpha }^{\ast }$,
we have
\begin{equation*}
\int_{\mathbb{R}^{N}}\left( |\nabla (u_{n}-u)|^{2}+\lambda
|u_{n}-u|^{2}\right) dx+|\gamma |B(u_{n}-u)=o(1).
\end{equation*}%
This implies that $u_{n}\rightarrow u$ in $H_{r}^{1}(\mathbb{R}^{N}),$ since
$\lambda >0.$ Similarly, for the case of $\gamma >0,\mu <0$ and $q=2^{\ast }$%
, we can also obtain $u_{n}\rightarrow u$ in $H_{r}^{1}(\mathbb{R}^{N})$.
Consequently, the proof is complete.
\end{proof}

\section{Local well-posedness for the Cauchy problem}

Denoting $g:\mathbb{C}\rightarrow \mathbb{C}$ by $g(\psi )=g_{1}(\psi
)+g_{2}(\psi )$, where $g_{1}(\psi )=\gamma (I_{\alpha }\ast |\psi
|^{p})|\psi |^{p-2}\psi $ and $g_{2}(\psi )=\mu |\psi |^{q-2}\psi $. (\ref%
{e1-0}) reads as
\begin{equation}
\left\{
\begin{array}{l}
i\partial _{t}\psi +\Delta \psi +g(\psi )=0, \\
\psi (0,x)=\psi _{0}.%
\end{array}%
\right.  \label{e3-1}
\end{equation}

\begin{definition}
\label{D3.1} The pair $(s,r)$ is referred to be as an Strichartz admissible
if
\begin{equation*}
\frac{2}{s}+\frac{N}{r}=\frac{N}{2}\quad \text{for }s,r\in \lbrack 2,\infty ]%
\text{ and }(s,r,N)\neq (2,\infty ,2).
\end{equation*}
\end{definition}

We introduce the spaces $L_{t}^{s}([0,T);L_{x}^{r})$ and $%
L_{t}^{s}([0,T);W_{x}^{1,r})$ (as $L_{t}^{s}L_{x}^{r}$ and $%
L_{t}^{s}W_{x}^{1,r}$ respectively to simplify the notation) equipped with
the Strichartz norms:
\begin{equation*}
\Vert w(t,x)\Vert _{L_{t}^{s}L_{x}^{r}}=\left( \int_{0}^{T}\Vert w(t,\cdot
)\Vert _{r}^{s}dt\right) ^{\frac{1}{s}}\text{ and }\Vert w(t,x)\Vert
_{L_{t}^{s}W_{x}^{1,r}}=\left( \int_{0}^{T}\Vert w(t,\cdot )\Vert _{W^{1,r}(%
\mathbb{R}^{N})}^{s}dt\right) ^{\frac{1}{s}},
\end{equation*}%
where for the function $w(t,x)$ defined on the time-space strip $[0,T)\times
\mathbb{R}^{N}$.

\begin{definition}
\label{D3.2}Let $T>0$. We say that $\psi (t,x)$ is an integral solution of
the Cauchy problem (\ref{e3-1}) on the time interval $[0,T)$ if\newline
$(i)$ $\psi \in \mathcal{C}([0,T);H_{x}^{1})\cap
L_{t}^{s}([0,T);W_{x}^{1,r});$\newline
$(ii)$ for all $t\in [0,T)$ it holds $\psi (t)=e^{it\Delta }\psi_{0}
+i\int_{0}^{t}e^{i(t-\rho)\Delta }g(\psi (\rho))d\rho.$
\end{definition}

Let us recall the following well-known Strichartz's estimates that will be
useful in the next proof, see \cite{C,KT}.

\begin{proposition}
\label{P3.4} For every admissible pairs $(s,r)$ and $(\tilde{s},\tilde{r})$,
there exists a constant $C>0$ such that for every $T>0$, the following
properties hold:\newline
$(i)$ For every $\varphi \in L_{x}^{2}(\mathbb{R}^{N})$, the function $%
t\mapsto e^{it\Delta }\varphi $ belongs to $L_{t}^{s}([0,T);L_{x}^{r})\cap
\mathcal{C(}[0,T);L_{x}^{2}(\mathbb{R}^{N}))$ and
\begin{equation*}
\left\Vert e^{it\Delta }\varphi \right\Vert _{L_{t}^{s}L_{x}^{r}}\leq C\Vert
\varphi \Vert _{L_{x}^{2}}.
\end{equation*}%
$(ii)$ Let $F\in L_{t}^{\tilde{s}^{\prime }}([0,T);L_{x}^{\tilde{r}^{\prime
}})$, where we use a prime to denote conjugate indices. Then the function
\begin{equation*}
t\mapsto \Phi _{F}(t):=\int_{0}^{t}e^{i(t-\rho)\Delta }F(\rho)d\rho
\end{equation*}%
belongs to $L_{t}^{s}([0,T);L_{x}^{r})\cap \mathcal{C}([0,T);L_{x}^{2}(%
\mathbb{R}^{N}))$ and
\begin{equation*}
\left\Vert \Phi _{F}\right\Vert _{L_{t}^{s}L_{x}^{r}}\leq C\Vert F\Vert
_{L_{t}^{\tilde{s}^{\prime }}L_{x}^{\tilde{r}^{\prime }}}.
\end{equation*}%
$(iii)$ For every $\varphi \in H_{x}^{1}(\mathbb{R}^{N})$, the function $%
t\mapsto e^{it\Delta }\varphi $ belongs to $L_{t}^{s}([0,T);W_{x}^{1,r})\cap
\mathcal{C}([0,T);L_{x}^{2}(\mathbb{R}^{N}))$ and
\begin{equation*}
\left\Vert e^{it\Delta }\varphi \right\Vert _{L_{t}^{s}W_{x}^{1,r}}\leq
C\Vert \varphi \Vert _{H_{x}^{1}}.
\end{equation*}
\end{proposition}

We choose the $L^{2}$-admissible pair $(s_{1},r_{1})$ and $(s_{2},r_{2})$
defined as follows
\begin{equation*}
(s_{1},r_{1})=\left( \frac{4p}{Np-N-\alpha },\frac{2Np}{N+\alpha }\right)
\text{ and }(s_{2},r_{2})=\left( \frac{4q}{(q-2)N},q\right) .
\end{equation*}%
Let $\Psi (\psi (t))=\Psi _{1}(\psi (t))+\Psi _{2}(\psi (t))$, where
\begin{equation*}
\Psi _{1}(\psi (t)):=\frac{1}{2}e^{it\Delta }\psi_{0}
+i\int_{0}^{t}e^{i(t-\rho )\Delta }g_{1}(\psi (\rho ))d\rho \text{ and }\Psi
_{2}(\psi (t)):=\frac{1}{2}e^{it\Delta }\psi_{0} +i\int_{0}^{t}e^{i(t-\rho
)\Delta }g_{2}(\psi (\rho ))d\rho .
\end{equation*}

\begin{lemma}
\label{L3-4} Assume that $\alpha \in (\max \{0,N-4\},N)$, $2\leq p\leq
2_{\alpha }^{\ast }$ and $2<q\leq 2^{\ast }$. Then for every admissible
couple $(\tilde{s},\tilde{r})$ there exists a constant $C>0$ such that for
every $T>0$ the following estimates hold:
\begin{eqnarray}
&&\Vert \Psi _{1}(\psi (t))-\Psi _{1}(v(t))\Vert _{L_{t}^{\tilde{s}}L_{x}^{%
\tilde{r}}}  \notag \\
&\leq &CT^{\theta }[\Vert \psi \Vert _{L_{t}^{\infty }H_{x}^{1}}^{p-1}\Vert
\psi \Vert _{L_{t}^{s_{1}}L_{x}^{r_{1}}}(\Vert \psi \Vert _{L_{t}^{\infty
}H_{x}^{1}}^{p-2}+\Vert v\Vert _{L_{t}^{\infty }H_{x}^{1}}^{p-2})\Vert \psi
-v\Vert _{L_{t}^{\infty }H_{x}^{1}}  \notag \\
&&+(\Vert \psi \Vert _{L_{t}^{\infty }H_{x}^{1}}^{p-1}+\Vert v\Vert
_{L_{t}^{\infty }H_{x}^{1}}^{p-1})\Vert \psi -v\Vert
_{L_{t}^{s_{1}}L_{x}^{r_{1}}}\Vert v\Vert _{L_{t}^{\infty }H_{x}^{1}}],
\label{e3-2}
\end{eqnarray}%
and%
\begin{eqnarray}
&&\Vert \nabla \lbrack \Psi _{1}(\psi (t))-\Psi _{1}(v(t))]\Vert _{L_{t}^{%
\tilde{s}}L_{x}^{\tilde{r}}}  \notag \\
&\leq &CT^{\theta }[\Vert \nabla \psi \Vert
_{L_{t}^{s_{1}}L_{x}^{r_{1}}}\Vert \psi \Vert _{L_{t}^{\infty
}H_{x}^{1}}^{p-1}(\Vert \psi \Vert _{L_{t}^{\infty }H_{x}^{1}}^{p-2}+\Vert
v\Vert _{L_{t}^{\infty }H_{x}^{1}}^{p-2})\Vert \psi -v\Vert _{L_{t}^{\infty
}H_{x}^{1}}  \notag \\
&&+\Vert \psi \Vert _{L_{t}^{\infty }H_{x}^{1}}^{p}(\Vert \psi \Vert
_{L_{t}^{\infty }H_{x}^{1}}^{p-2}+\Vert v\Vert _{L_{t}^{\infty
}H_{x}^{1}}^{p-2})\Vert \nabla (\psi -v)\Vert _{L_{t}^{s_{1}}L_{x}^{r_{1}}}
\notag \\
&&+(\Vert \psi \Vert _{L_{t}^{\infty }H_{x}^{1}}^{p-1}+\Vert v\Vert
_{L_{t}^{\infty }H_{x}^{1}}^{p-1})(\Vert \nabla (\psi -v)\Vert
_{L_{t}^{s_{1}}L_{x}^{r_{1}}}\Vert v\Vert _{L_{t}^{\infty }H_{x}^{1}}^{p-1}
\notag \\
&&+\Vert \psi -v\Vert _{L_{t}^{\infty }H_{x}^{1}}\Vert v\Vert
_{L_{t}^{\infty }H_{x}^{1}}^{p-2}\Vert \nabla v\Vert
_{L_{t}^{s_{1}}L_{x}^{r_{1}}})],  \label{e3-3}
\end{eqnarray}%
and%
\begin{equation}
\Vert \Psi _{2}(\psi (t))-\Psi _{2}(v(t))\Vert _{L_{t}^{\tilde{s}}L_{x}^{%
\tilde{r}}}\leq CT^{\delta }\left[ (\Vert \psi \Vert _{L_{t}^{\infty
}H_{x}^{1}}^{q-2}+\Vert v\Vert _{L_{t}^{\infty }H_{x}^{1}}^{q-2})\Vert \psi
-v\Vert _{L_{t}^{s_{2}}L_{x}^{r_{2}}}\right] ,  \label{e3-4}
\end{equation}%
and
\begin{equation}
\Vert \nabla \lbrack \Psi _{2}(\psi (t))-\Psi _{2}(v(t))]\Vert _{L_{t}^{%
\tilde{s}}L_{x}^{\tilde{r}}}\leq CT^{\delta }\left[ (\Vert \psi \Vert
_{L_{t}^{\infty }H_{x}^{1}}^{q-2}+\Vert v\Vert _{L_{t}^{\infty
}H_{x}^{1}}^{q-2})\Vert \nabla (\psi -v)\Vert _{L_{t}^{s_{2}}L_{x}^{r_{2}}}%
\right] ,  \label{e3-5}
\end{equation}%
where $\theta :=\frac{N+\alpha -(N-2)p}{2p}\geq 0$ and $\delta :=\frac{%
2N-(N-2)q}{2q}\geq 0$.
\end{lemma}

\begin{proof}
By using Proposition \ref{P3.4}, we have
\begin{equation}
\left\{
\begin{array}{ll}
\Vert \Psi _{1}(\psi (t))-\Psi _{1}(v(t))\Vert _{L_{t}^{\tilde{s}}L_{x}^{%
\tilde{r}}}\lesssim \Vert g_{1}(\psi )-g_{1}(v)\Vert _{L_{t}^{s_{1}^{\prime
}}L_{x}^{r_{1}^{\prime }}}, &  \\
\Vert \nabla \lbrack \Psi _{1}(\psi (t))-\Psi _{1}(v(t))]\Vert _{L_{t}^{%
\tilde{s}}L_{x}^{\tilde{r}}}\lesssim \Vert \nabla \lbrack g_{1}(\psi
)-g_{1}(v)]\Vert _{L_{t}^{s_{1}^{\prime }}L_{x}^{r_{1}^{\prime }}}, &  \\
\Vert \Psi _{2}(\psi (t))-\Psi _{2}(v(t))\Vert _{L_{t}^{\tilde{s}}L_{x}^{%
\tilde{r}}}\lesssim \Vert g_{2}(\psi )-g_{2}(v)\Vert _{L_{t}^{s_{2}^{\prime
}}L_{x}^{r_{2}^{\prime }}}, &  \\
\Vert \nabla \lbrack \Psi _{2}(\psi (t))-\Psi _{2}(v(t))]\Vert _{L_{t}^{%
\tilde{s}}L_{x}^{\tilde{r}}}\lesssim \Vert \nabla \lbrack g_{2}(\psi
)-g_{2}(v)]\Vert _{L_{t}^{s_{2}^{\prime }}L_{x}^{r_{2}^{\prime }}}. &
\end{array}%
\right.  \label{e3-6}
\end{equation}%
By H\"{o}lder inequality in time in (\ref{e3-6}), we obtain
\begin{equation*}
\left\{
\begin{array}{ll}
\Vert g_{1}(\psi (t))-g_{1}(v(t))\Vert _{L_{t}^{s_{1}^{\prime
}}L_{x}^{r_{1}^{\prime }}}\lesssim T^{\theta }\Vert g_{1}(\psi
)-g_{1}(v)\Vert _{L_{t}^{s_{1}}L_{x}^{r_{1}^{\prime }}}, &  \\
\Vert \nabla \lbrack g_{1}(\psi (t))-g_{1}(v(t))]\Vert
_{L_{t}^{s_{1}^{\prime }}L_{x}^{r_{1}^{\prime }}}\lesssim T^{\theta }\Vert
\nabla \lbrack g_{1}(\psi )-g_{1}(v)]\Vert
_{L_{t}^{s_{1}}L_{x}^{r_{1}^{\prime }}}, &  \\
\Vert g_{2}(\psi (t))-g_{2}(v(t))\Vert _{L_{t}^{s_{2}^{\prime
}}L_{x}^{r_{2}^{\prime }}}\lesssim T^{\delta }\Vert g_{2}(\psi
)-g_{2}(v)\Vert _{L_{t}^{s_{2}}L_{x}^{r_{2}^{\prime }}}, &  \\
\Vert \nabla \lbrack g_{2}(\psi (t))-g_{2}(v(t))]\Vert
_{L_{t}^{s_{2}^{\prime }}L_{x}^{r_{2}^{\prime }}}\lesssim T^{\delta }\Vert
\nabla \lbrack g_{2}(\psi )-g_{2}(v)]\Vert
_{L_{t}^{s_{2}}L_{x}^{r_{2}^{\prime }}}, &
\end{array}%
\right.  \label{e3-7}
\end{equation*}%
where $\theta =\frac{N+\alpha -(N-2)p}{2p}$ and $\delta =\frac{2N-(N-2)q}{2q}
$. Following the methods of estimate in \cite{AR}, by using H\"{o}lder
inequality, Hardy-Littlewood-Sobolev inequality and Sobolev inequality, we
have
\begin{eqnarray*}
\Vert g_{1}(\psi (t))-g_{1}(v(t))\Vert _{L_{t}^{s_{1}}L_{x}^{r_{1}^{\prime
}}} &\lesssim &\Vert (|x|^{-(N-\alpha )}\ast |\psi |^{p})(|\psi |^{p-2}\psi
-|v|^{p-2}v)\Vert _{L_{t}^{s_{1}}L_{x}^{r_{1}^{\prime }}} \\
&&+\Vert (|x|^{-(N-\alpha )}\ast (|\psi |^{p}-|v|^{p}))|v|^{p-2}v\Vert
_{L_{t}^{s_{1}}L_{x}^{r_{1}^{\prime }}} \\
&\lesssim &\Vert (|x|^{-(N-\alpha )}\ast |\psi |^{p})\Vert
_{L_{t}^{s_{1}}L_{x}^{\frac{2N}{N-\alpha }}}\Vert |\psi |^{p-2}\psi
-|v|^{p-2}v\Vert _{L_{t}^{\infty }L_{x}^{\frac{2Np}{(N+\alpha )(p-1)}}} \\
&&+\Vert |x|^{-(N-\alpha )}\ast (|\psi |^{p}-|v|^{p})\Vert
_{L_{t}^{s_{1}}L_{x}^{\frac{2N}{N-\alpha }}}\Vert |v|^{p-2}v\Vert
_{L_{t}^{\infty }L_{x}^{\frac{2Np}{(N+\alpha )(p-1)}}} \\
&\lesssim &\Vert \psi \Vert _{L_{t}^{s_{1}p}L_{x}^{r_{1}}}^{p}(\Vert \psi
\Vert _{L_{t}^{\infty }L_{x}^{r_{1}}}^{p-2}+\Vert v\Vert _{L_{t}^{\infty
}L_{x}^{r_{1}}}^{p-2})\Vert \psi -v\Vert _{L_{t}^{\infty }L_{x}^{r_{1}}} \\
&&+\Vert |\psi |^{p}-|v|^{p}\Vert _{L_{t}^{s_{1}}L_{x}^{\frac{2N}{N+\alpha }%
}}\Vert v\Vert _{L_{t}^{\infty }L_{x}^{r_{1}}}^{p-1} \\
&\lesssim &\Vert \psi \Vert _{L_{t}^{\infty }H_{x}^{1}}^{p-1}\Vert \psi
\Vert _{L_{t}^{s_{1}}L_{x}^{r_{1}}}(\Vert \psi \Vert _{L_{t}^{\infty
}H_{x}^{1}}^{p-2}+\Vert v\Vert _{L_{t}^{\infty }H_{x}^{1}}^{p-2})\Vert \psi
-v\Vert _{L_{t}^{\infty }H_{x}^{1}} \\
&&+(\Vert \psi \Vert _{L_{t}^{\infty }H_{x}^{1}}^{p-1}+\Vert v\Vert
_{L_{t}^{\infty }H_{x}^{1}}^{p-1})\Vert \psi -v\Vert
_{L_{t}^{s_{1}}L_{x}^{r_{1}}}\Vert v\Vert _{L_{t}^{\infty }H_{x}^{1}}^{p-1}
\end{eqnarray*}%
and
\begin{eqnarray*}
\Vert \nabla \lbrack g_{1}(\psi (t))-g_{1}(v(t))]\Vert
_{L_{t}^{s_{1}}L_{x}^{r_{1}^{\prime }}} &\lesssim &\Vert \nabla \lbrack
(|x|^{-(N-\alpha )}\ast |\psi |^{p})(|\psi |^{p-2}\psi -|v|^{p-2}v)]\Vert
_{L_{t}^{s_{1}}L_{x}^{r_{1}^{\prime }}} \\
&&+\Vert \nabla \lbrack (|x|^{-(N-\alpha )}\ast (|\psi
|^{p}-|v|^{p}))|v|^{p-2}v]\Vert _{L_{t}^{s_{1}}L_{x}^{r_{1}^{\prime }}}:=A+B.
\end{eqnarray*}%
To estimate $A$ and $B$, we have
\begin{eqnarray*}
A &\lesssim &\Vert |x|^{-(N-\alpha )}\ast (|\psi |^{p-1}\nabla \psi )\Vert
_{L_{t}^{s_{1}}L_{x}^{\frac{2N}{N-\alpha }}}\Vert |\psi |^{p-2}\psi
-|v|^{p-2}v\Vert _{L_{t}^{\infty }L_{x}^{\frac{2Np}{(N+\alpha )(p-1)}}} \\
&&+\Vert |x|^{-(N-\alpha )}\ast |\psi |^{p}\Vert _{L_{t}^{\infty }L_{x}^{%
\frac{2N}{N-\alpha }}}\Vert \nabla (|\psi |^{p-2}\psi -|v|^{p-2}v)\Vert
_{L_{t}^{s_{1}}L_{x}^{\frac{2Np}{(N+\alpha )(p-1)}}} \\
&\lesssim &\Vert |\psi |^{p-1}\nabla \psi \Vert _{L_{t}^{s_{1}}L_{x}^{\frac{%
2N}{N+\alpha }}}(\Vert \psi \Vert _{L_{t}^{\infty
}L_{x}^{r_{1}}}^{p-2}+\Vert v\Vert _{L_{t}^{\infty
}L_{x}^{r_{1}}}^{p-2})\Vert \psi -v\Vert _{L_{t}^{\infty }L_{x}^{r_{1}}} \\
&&+\Vert \psi \Vert _{L_{t}^{\infty }L_{x}^{r_{1}}}^{p}(\Vert \psi \Vert
_{L_{t}^{\infty }L_{x}^{r_{1}}}^{p-2}+\Vert v\Vert _{L_{t}^{\infty
}L_{x}^{r_{1}}}^{p-2})\Vert \nabla (\psi -v)\Vert
_{L_{t}^{s_{1}}L_{x}^{r_{1}}} \\
&\lesssim &\Vert \nabla \psi \Vert _{L_{t}^{s_{1}}L_{x}^{r_{1}}}\Vert \psi
\Vert _{L_{t}^{\infty }L_{x}^{r_{1}}}^{p-1}(\Vert \psi \Vert _{L_{t}^{\infty
}L_{x}^{r_{1}}}^{p-2}+\Vert v\Vert _{L_{t}^{\infty
}L_{x}^{r_{1}}}^{p-2})\Vert \psi -v\Vert _{L_{t}^{\infty }L_{x}^{r_{1}}} \\
&&+\Vert \psi \Vert _{L_{t}^{\infty }L_{x}^{r_{1}}}^{p}(\Vert \psi \Vert
_{L_{t}^{\infty }L_{x}^{r_{1}}}^{p-2}+\Vert v\Vert _{L_{t}^{\infty
}L_{x}^{r_{1}}}^{p-2})\Vert \nabla (\psi -v)\Vert
_{L_{t}^{s_{1}}L_{x}^{r_{1}}} \\
&\lesssim &\Vert \nabla \psi \Vert _{L_{t}^{s_{1}}L_{x}^{r_{1}}}\Vert \psi
\Vert _{L_{t}^{\infty }H_{x}^{1}}^{p-1}(\Vert \psi \Vert _{L_{t}^{\infty
}H_{x}^{1}}^{p-2}+\Vert v\Vert _{L_{t}^{\infty }H_{x}^{1}}^{p-2})\Vert \psi
-v\Vert _{L_{t}^{\infty }H_{x}^{1}} \\
&&+\Vert \psi \Vert _{L_{t}^{\infty }H_{x}^{1}}^{p}(\Vert \psi \Vert
_{L_{t}^{\infty }H_{x}^{1}}^{p-2}+\Vert v\Vert _{L_{t}^{\infty
}H_{x}^{1}}^{p-2})\Vert \nabla (\psi -v)\Vert _{L_{t}^{s_{1}}L_{x}^{r_{1}}}
\end{eqnarray*}%
and
\begin{eqnarray*}
B &\lesssim &\Vert |x|^{-(N-\alpha )}\ast (\nabla (|\psi
|^{p}-|v|^{p}))\Vert _{L_{t}^{s_{1}}L_{x}^{\frac{2N}{N-\alpha }}}\Vert
|v|^{p-2}v\Vert _{L_{t}^{\infty }L_{x}^{\frac{2Np}{(N+\alpha )(p-1)}}} \\
&&+\Vert |x|^{-(N-\alpha )}\ast (|\psi |^{p}-|v|^{p})\Vert _{L_{t}^{\infty
}L_{x}^{\frac{2N}{N-\alpha }}}\Vert \nabla (|v|^{p-2}v)\Vert
_{L_{t}^{s_{1}}L_{x}^{\frac{2Np}{(N+\alpha )(p-1)}}} \\
&\lesssim &\Vert |\nabla (\psi |^{p}-|v|^{p})\Vert _{L_{t}^{s_{1}}L_{x}^{%
\frac{2N}{N+\alpha }}}\Vert v\Vert _{L_{t}^{\infty
}L_{x}^{r_{1}}}^{p-1}+\Vert |\psi |^{p}-|v|^{p}\Vert _{L_{t}^{\infty }L_{x}^{%
\frac{2N}{N+\alpha }}}\Vert v\Vert _{L_{t}^{\infty
}L_{x}^{r_{1}}}^{p-2}\Vert \nabla v\Vert _{L_{t}^{s_{1}}L_{x}^{r_{1}}} \\
&\lesssim &\left( \Vert \psi \Vert _{L_{t}^{\infty
}L_{x}^{r_{1}}}^{p-1}+\Vert v\Vert _{L_{t}^{\infty
}L_{x}^{r_{1}}}^{p-1}\right) \Vert \nabla (\psi -v)\Vert
_{L_{t}^{s_{1}}L_{x}^{r_{1}}}\Vert v\Vert _{L_{t}^{\infty
}L_{x}^{r_{1}}}^{p-1} \\
&&+\left( \Vert \psi \Vert _{L_{t}^{\infty }L_{x}^{r_{1}}}^{p-1}+\Vert
v\Vert _{L_{t}^{\infty }L_{x}^{r_{1}}}^{p-1}\right) \Vert \psi -v\Vert
_{L_{t}^{\infty }L_{x}^{r_{1}}}\Vert v\Vert _{L_{t}^{\infty
}L_{x}^{r_{1}}}^{p-2}\Vert \nabla v\Vert _{L_{t}^{s_{1}}L_{x}^{r_{1}}} \\
&\lesssim &\left( \Vert \psi \Vert _{L_{t}^{\infty }H_{x}^{1}}^{p-1}+\Vert
v\Vert _{L_{t}^{\infty }H_{x}^{1}}^{p-1}\right) \left( \Vert \nabla (\psi
-v)\Vert _{L_{t}^{s_{1}}L_{x}^{r_{1}}}\Vert v\Vert _{L_{t}^{\infty
}H_{x}^{1}}^{p-1}+\Vert \psi -v\Vert _{L_{t}^{\infty }H_{x}^{1}}\Vert v\Vert
_{L_{t}^{\infty }H_{x}^{1}}^{p-2}\Vert \nabla v\Vert
_{L_{t}^{s_{1}}L_{x}^{r_{1}}}\right) .
\end{eqnarray*}%
Thus we can get
\begin{eqnarray*}
&&\Vert \nabla \lbrack g_{1}(\psi (t))-g_{1}(v(t))]\Vert
_{L_{t}^{s_{1}}L_{x}^{r_{1}^{\prime }}} \\
&\lesssim &\Vert \nabla \psi \Vert _{L_{t}^{s_{1}}L_{x}^{r_{1}}}\Vert \psi
\Vert _{L_{t}^{\infty }H_{x}^{1}}^{p-1}\left( \Vert \psi \Vert
_{L_{t}^{\infty }H_{x}^{1}}^{p-2}+\Vert v\Vert _{L_{t}^{\infty
}H_{x}^{1}}^{p-2}\right) \Vert \psi -v\Vert _{L_{t}^{\infty }H_{x}^{1}} \\
&&+\Vert \psi \Vert _{L_{t}^{\infty }H_{x}^{1}}^{p}\left( \Vert \psi \Vert
_{L_{t}^{\infty }H_{x}^{1}}^{p-2}+\Vert v\Vert _{L_{t}^{\infty
}H_{x}^{1}}^{p-2}\right) \Vert \nabla (\psi -v)\Vert
_{L_{t}^{s_{1}}L_{x}^{r_{1}}} \\
&&+\left( \Vert \psi \Vert _{L_{t}^{\infty }H_{x}^{1}}^{p-1}+\Vert v\Vert
_{L_{t}^{\infty }H_{x}^{1}}^{p-1}\right) \left( \Vert \nabla (\psi -v)\Vert
_{L_{t}^{s_{1}}L_{x}^{r_{1}}}\Vert v\Vert _{L_{t}^{\infty
}H_{x}^{1}}^{p-1}+\Vert \psi -v\Vert _{L_{t}^{\infty }H_{x}^{1}}\Vert v\Vert
_{L_{t}^{\infty }H_{x}^{1}}^{p-2}\Vert \nabla v\Vert
_{L_{t}^{s_{1}}L_{x}^{r_{1}}}\right) .
\end{eqnarray*}%
Using H\"{o}lder inequality and Sobolev inequality again leads to
\begin{eqnarray*}
\Vert g_{2}(\psi (t))-g_{2}(v(t))\Vert _{L_{t}^{s_{2}}L_{x}^{r_{2}^{\prime
}}} &\lesssim &\Vert (|\psi |^{q-2}+|v|^{q-2})(\psi -v)\Vert
_{L_{t}^{s_{2}}L_{x}^{r_{2}^{\prime }}} \\
&\lesssim &\left( \Vert \psi \Vert _{L_{t}^{\infty
}L_{x}^{r_{2}}}^{q-2}+\Vert v\Vert _{L_{t}^{\infty
}L_{x}^{r_{2}}}^{q-2}\right) \Vert \psi -v\Vert _{L_{t}^{s_{2}}L_{x}^{r_{2}}}
\\
&\lesssim &\left( \Vert \psi \Vert _{L_{t}^{\infty }H_{x}^{1}}^{q-2}+\Vert
v\Vert _{L_{t}^{\infty }H_{x}^{1}}^{q-2}\right) \Vert \psi -v\Vert
_{L_{t}^{s_{2}}L_{x}^{r_{2}}},
\end{eqnarray*}%
and
\begin{equation*}
\Vert \nabla \lbrack g_{2}(\psi (t))-g_{2}(v(t))]\Vert
_{L_{t}^{s_{2}}L_{x}^{r_{2}^{\prime }}}\lesssim \left( \Vert \psi \Vert
_{L_{t}^{\infty }H_{x}^{1}}^{q-2}+\Vert v\Vert _{L_{t}^{\infty
}H_{x}^{1}}^{q-2}\right) \Vert \nabla (\psi -v)\Vert
_{L_{t}^{s_{2}}L_{x}^{r_{2}}}.
\end{equation*}%
The inequality (\ref{e3-2})-(\ref{e3-5}) follows by applying the above
estimates.
\end{proof}

\textbf{Now we give the proof of Theorem \ref{t1}:} As we mentioned before,
we are going to prove the theorem by means of a contraction mapping
argument. For $T>0$, we define
\begin{equation*}
\vartheta (\psi )=\max \left\{ \sup_{t\in \lbrack 0,T)}\Vert \psi \Vert
_{H_{x}^{1}},\Vert \psi \Vert _{L_{t}^{s_{1}}W_{x}^{1,r_{1}}},\Vert \psi
\Vert _{L_{t}^{s_{2}}W_{x}^{1,r_{2}}}\right\} ,
\end{equation*}%
and
\begin{equation*}
\mathcal{B}_{R,T}:=\left\{ \psi \in \mathcal{C}([0,T);H_{x}^{1})\cap
L_{t}^{s_{1}}([0,T);W_{x}^{1,r_{1}})\cap L_{t}^{s_{2}}([0,T);W_{x}^{1,r_{2}})%
\text{ }|\text{\ }\vartheta (\psi )\leq R\right\} .
\end{equation*}%
By Proposition \ref{P3.4} and Lemma \ref{L3-4} (applied with $v=0$), we
obtain
\begin{equation*}
\Vert \Psi _{1}(\psi (t))\Vert _{L_{t}^{s_{1}}W_{x}^{1,r_{1}}}\lesssim \Vert
\psi_{0} \Vert _{H_{x}^{1}}+T^{\theta }\Vert \psi \Vert _{L_{t}^{\infty
}H_{x}^{1}}^{2(p-1)}\Vert \psi \Vert _{L_{t}^{s_{1}}W_{x}^{1,r_{1}}},
\end{equation*}%
and
\begin{equation*}
\Vert \Psi _{2}(\psi (t))\Vert _{L_{t}^{s_{2}}W_{x}^{1,r_{2}}}\lesssim \Vert
\psi_{0} \Vert _{H_{x}^{1}}+T^{\delta }\Vert \psi \Vert _{L_{t}^{\infty
}H_{x}^{1}}^{q-2}\Vert \psi \Vert _{L_{t}^{s_{2}}W_{x}^{1,r_{2}}}.
\end{equation*}%
Following a similar argument, we also have
\begin{equation*}
\Vert \Psi _{1}(\psi (t))\Vert _{L_{t}^{\infty }H_{x}^{1}}\lesssim \Vert
\psi_{0} \Vert _{H_{x}^{1}}+T^{\theta }\Vert \psi \Vert _{L_{t}^{\infty
}H_{x}^{1}}^{2(p-1)}\Vert \psi \Vert _{L_{t}^{s_{1}}W_{x}^{1,r_{1}}}
\end{equation*}%
and
\begin{equation*}
\Vert \Psi _{2}(\psi (t))\Vert _{L_{t}^{\infty }H_{x}^{1}}\lesssim \Vert
\psi_{0} \Vert _{H_{x}^{1}}+T^{\delta }\Vert \psi \Vert _{L_{t}^{\infty
}H_{x}^{1}}^{q-2}\Vert \psi \Vert _{L_{t}^{s_{2}}W_{x}^{1,r_{2}}}.
\end{equation*}%
Thus we get for $\psi \in \mathcal{B}_{R,T}$,
\begin{equation*}
\Vert \Psi \Vert _{L^{s_{1}}W_{x}^{1,r_{1}}}+\Vert \Psi \Vert
_{L^{s_{2}}W_{x}^{1,r_{2}}}+\Vert \Psi \Vert _{L_{t}^{\infty }H_{x}^{1}}\leq
C\Vert \psi_{0} \Vert _{H_{x}^{1}}+CT^{\theta }R^{2p-1}+CT^{\delta }R^{q-1}.
\label{e3-10}
\end{equation*}%
Set $R=2C\Vert \psi _{0}\Vert _{H_{x}^{1}}$, for the energy-subcritical
case, we choose $0<T<1$ such that
\begin{equation}
CT^{\theta }R^{2p-2}+CT^{\delta }R^{q-2}\leq \frac{1}{2}.  \label{e3-11}
\end{equation}%
For the energy-critical case, we have $\theta =\delta =0$, and thus we take $%
\Vert \psi _{0}\Vert _{H_{x}^{1}}$ small enough, so that (\ref{e3-11})
holds. This implies that $\mathcal{B}_{R,T}$ is an invariant set of $\Phi $.

By Lemma \ref{L3-4}, we obtain that for $\psi ,v\in \mathcal{B}_{R,T}$
\begin{equation*}
d(\Phi (\psi )-\Phi (v))\lesssim (T^{\theta }R^{2(p-1)}+T^{\delta
}R^{q-2})d(\psi ,v)
\end{equation*}%
which implies that the operator $\Phi $ is a contraction. In particular, $%
\Phi $ has a unique fixed point in this space.

\section{Existence and multiplicity of standing waves}

\subsection{The case $\protect\gamma<0, \protect\mu>0$}

\begin{lemma}
\label{L4-19} Let $N\geq 2,\overline{p}<p<2_{\alpha }^{\ast },\overline{q}%
<q<2^{\ast }$ and $2Np-qN-2\alpha \leq 0.$ Then there is a unique $%
s^{-}(u)>0 $ such that $u_{s^{-}(u)}\in \mathcal{M}(c)=\mathcal{M}^{-}(c)$
and $E_{\gamma ,\mu }(u_{s^{-}(u)})=\sup_{s>0}E_{\gamma ,\mu }(u_{s})>0.$
\end{lemma}

\begin{proof}
Clearly, $\mathcal{M}(c)=\mathcal{M}^{-}(c)$ by (\ref{e2-7}). Fix $u\in S(c)$%
. Let
\begin{equation*}
\widetilde{k}(s):=\frac{2q}{\mu N(q-2)}\left( A(u)+\frac{N|\gamma |}{%
N+\alpha +2}B(u)\right) s^{\frac{4-N(q-2)}{2}}\text{ for }s>0.
\end{equation*}%
Clearly, $u_{s}\in \mathcal{M}(c)$ if and only if $\widetilde{k}(s)=C(u)$. A
direct calculation shows that $\lim_{s\rightarrow 0^{+}}\widetilde{k}%
(s)=\infty ,$ $\lim_{s\rightarrow +\infty }\widetilde{k}(s)=0$ and $%
\widetilde{k}(s)$ is decreasing on $(0,+\infty )$. This implies that there
exists a unique $s^{-}(u)>0$ such that $u_{s^{-}(u)}\in \mathcal{M}(c).$
Moreover, we also obtain that $g_{u}^{\prime }(s)>0$ on $(0,s^{-}(u))$ and $%
g_{u}^{\prime }(s)<0$ on $(s^{-}(u),+\infty ),$ which indicates that $%
E_{\gamma ,\mu }(u_{s^{-}(u)})=\sup_{s>0}E_{\gamma ,\mu }(u_{s})>0.$ This
completes the proof.
\end{proof}

\begin{lemma}
\label{L4-9} Let $N\geq 2,\overline{p}<p<2_{\alpha }^{\ast },\overline{q}%
<q<2^{\ast }$ and $2Np-qN-2\alpha \leq 0.$ Then the functional $E_{\gamma
,\mu }$ is bounded from below by a positive constant and coercive on $%
\mathcal{M}(c)=\mathcal{M}^{-}(c)$ for all $c>0$.
\end{lemma}

\begin{proof}
Let $u\in \mathcal{M}(c)=\mathcal{M}^{-}(c)$. Using (\ref{e2-1}) and (\ref%
{e2-19}) yields to
\begin{equation*}
A(u)\leq \frac{\mu N(q-2)}{2q}\overline{S}c^{\frac{2N-(N-2)q}{4}}A(u)^{\frac{%
(q-2)N}{4}},
\end{equation*}%
which implies that there exists a constant $D_{1}>0$ depending on $\mu $ and
$c,$ such that $A\left( u\right) \geq D_{1}$, since $\frac{(q-2)N}{4}>1.$
Then by (\ref{e2-19}), we get
\begin{eqnarray*}
E_{\gamma ,\mu }(u)=\frac{N(q-2)-4}{2N(q-2)}A(u)-\frac{\gamma
(Nq-2Np+2\alpha )}{2Np(q-2)}B(u)>\frac{N(q-2)-4}{2N(q-2)}D_{1}>0,
\end{eqnarray*}%
Hence, $E_{\gamma ,\mu }$ is bounded from below by a positive constant and
is coercive on $\mathcal{M}(c)$. The proof is complete.
\end{proof}

We now define $S_{r}(c):=S(c)\cap H_{r}^{1}(\mathbb{R}^{N}),\text{ }\mathcal{%
M}_{r}(c):=\mathcal{M}(c)\cap H_{r}^{1}(\mathbb{R}^{N})$ and $\mathcal{M}%
_{r}^{-}(c):=\mathcal{M}^{-}(c)\cap H_{r}^{1}(\mathbb{R}^{N})$. Clearly, by
Lemma \ref{L4-9} one has $m_{r}^{-}(c):=\inf_{u\in \mathcal{M}%
_{r}^{-}(c)}E_{\gamma ,\mu }(u)\geq \inf_{u\in \mathcal{M}^{-}(c)}E_{\gamma
,\mu }(u)>0.$ Next we apply Lemma \ref{L2-6} to construct a Palais--Smale
sequence $\{u_{n}\}\subset \mathcal{M}_{r}^{-}(c)$ for $E_{\gamma ,\mu }$
restricted to $S_{r}(c)$ at level $m_{r}^{-}(c)$. Our arguments are inspired
by \cite{BS3,CJ}. Observe that $\Theta =\emptyset $ is admissible. Now we
define the functional $J^{-}:S_{r}(c)\mapsto \mathbb{R}$ by $%
J^{-}(u):=E_{\gamma ,\mu }(u_{s^{-}\left( u\right) })$.

\begin{lemma}
\label{L3-6} The map $u\in S_{r}(c)\mapsto s^{-}(u)\in \mathbb{R}$ is of
class $C^{1}$.
\end{lemma}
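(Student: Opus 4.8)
The plan is to deduce the $C^1$-regularity of $u\mapsto t^-(u)$ from the implicit function theorem applied to the equation $g_u'(t)=0$ at the root $t=t^-(u)$ singled out by Lemma \ref{L3-9} (we work under the hypotheses of that lemma, so that $t^-(u)$ is well defined for every $u\in S(c)$). First I would introduce
\begin{equation*}
\Phi:(0,\infty)\times S(c)\longrightarrow\mathbb{R},\qquad \Phi(t,u):=g_u'(t).
\end{equation*}
Recalling $g_u(t)=\tfrac{t^2}{2}A(u)-\tfrac{\gamma}{2p}t^{Np-N-\alpha}B(u)-\tfrac{\mu}{q}t^{N(q-2)/2}C(u)$, the function $\Phi(t,u)$ is a finite linear combination of terms of the form $t^{a}F(u)$ with $a\in\mathbb{R}$ and $F\in\{A,B,C\}$. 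Since $E_{\gamma,\mu}\in C^1(H^1(\mathbb{R}^N))$, each of $A,B,C$ is a $C^1$ functional, hence so is its restriction to the $C^1$-submanifold $S(c)$; together with the fact that $t\mapsto t^{a}$ is smooth on $(0,\infty)$, this shows $\Phi$ is of class $C^1$ on $(0,\infty)\times S(c)$, with $\partial_t\Phi(t,u)=g_u''(t)$ continuous.

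Next I would fix $u_0\in S(c)$ and set $t_0:=t^-(u_0)$, $w:=(u_0)_{t_0}$. By Lemma \ref{L3-9} we have $\Phi(t_0,u_0)=g_{u_0}'(t_0)=0$ and $w\in\mathcal{M}^-(c)$. From $(w)_s=\big((u_0)_{t_0}\big)_s=(u_0)_{t_0 s}$ one gets $g_w(s)=g_{u_0}(t_0 s)$, and differentiating twice in $s$ at $s=1$ gives $g_w''(1)=t_0^2\,g_{u_0}''(t_0)$. Since $w\in\mathcal{M}^-(c)$ means precisely $g_w''(1)<0$, we obtain $\partial_t\Phi(t_0,u_0)=g_{u_0}''(t_0)<0$, in particular $\partial_t\Phi(t_0,u_0)\ne0$. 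The implicit function theorem then yields a neighbourhood $\mathcal{U}\subset S(c)$ of $u_0$, an open interval $I\ni t_0$ with $\overline{I}\subset(0,\infty)$, and a map $\psi\in C^1(\mathcal{U},I)$ such that, for every $u\in\mathcal{U}$, $\psi(u)$ is the unique solution of $\Phi(t,u)=0$ with $t\in I$; moreover $\psi(u_0)=t_0$.

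It remains to identify $\psi$ with $t^-$ near $u_0$, which is the only step requiring a little care. As $A,B$ are continuous on $S(c)$, the explicit formula for $t_{\max}(u)$ shows $u\mapsto t_{\max}(u)$ is continuous, with $t_{\max}(u_0)<t_0$; likewise $u\mapsto g_u''(\psi(u))$ is continuous with value $g_{u_0}''(t_0)<0$ at $u_0$. Hence, after shrinking $\mathcal{U}$ (and $I$) we may assume $t_{\max}(u)<\inf I$ and $g_u''(\psi(u))<0$ for all $u\in\mathcal{U}$. Then for $u\in\mathcal{U}$ the dilation $u_{\psi(u)}$ satisfies $\psi(u)>t_{\max}(u)$, $g_u'(\psi(u))=0$ and $g_u''(\psi(u))<0$, i.e. $u_{\psi(u)}\in\mathcal{M}^-(c)$; by the uniqueness of such a dilation parameter asserted in Lemma \ref{L3-9}, $\psi(u)=t^-(u)$. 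Therefore $t^-\equiv\psi$ is $C^1$ on $\mathcal{U}$, and since $u_0\in S(c)$ was arbitrary, $t^-\in C^1(S(c),\mathbb{R})$. The main obstacle is exactly this identification — ruling out that the implicit-function branch could be the other critical point $t^+(u)<t_{\max}(u)$ of $g_u$ — and it is resolved by the continuity of $t_{\max}$ and of $u\mapsto g_u''(t^-(u))$ together with the uniqueness already contained in Lemma \ref{L3-9}.
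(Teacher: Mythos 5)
Your proposal is correct and follows essentially the same route as the paper: both apply the implicit function theorem to $F(t,u)=g_u'(t)$ at $(t^-(u),u)$, using $\mathcal{M}^0(c)=\emptyset$ (equivalently $g_u''(t^-(u))\neq 0$) for the nondegeneracy and the impossibility of passing continuously from $\mathcal{M}^-(c)$ to $\mathcal{M}^+(c)$ to identify the implicit branch with $t^-$. Your write-up simply makes the latter identification step (via continuity of $t_{\max}$ and of $u\mapsto g_u''(\psi(u))$) and the sign $g_{u_0}''(t_0)<0$ explicit, which the paper states more tersely.
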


\begin{proof}
Applying the implicit function theorem on the $C^{1}$ function $F:\mathbb{%
R\times }S_{r}(c)\rightarrow \mathbb{R}$ defined by $F(s,u)=g_{u}^{\prime
}(s)$.
\end{proof}

By the above lemma, we obtain that the functional $J^{-}$ is of class $C^{1}$%
.

\begin{lemma}
\label{L3-12} The map $T_{u}S_{r}(c)\rightarrow T_{u_{s^{-}(u)}}S_{r}\left(
c\right) $ defined by $\phi \rightarrow \phi _{s^{-}(u)}$ is an isomorphism.
\end{lemma}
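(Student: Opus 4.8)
The plan is to produce an explicit bounded inverse, so that the map is a linear homeomorphism between the two tangent spaces. Write $t:=t^{-}(u)>0$ for brevity (finite and strictly positive by Lemma \ref{L3-9}), and recall that for $v\in S(c)$ the tangent space is $T_{v}S(c)=\{\psi\in H^{1}(\mathbb{R}^{N})\mid \int_{\mathbb{R}^{N}}v\psi\,dx=0\}$. Denote by $L_{t}\colon\psi\mapsto\psi_{t}$, where $\psi_{t}(x)=t^{N/2}\psi(tx)$, the dilation operator appearing throughout Section 2; it is plainly linear on $H^{1}(\mathbb{R}^{N})$.

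First I would check that $L_{t}$ actually maps $T_{u}S(c)$ into $T_{u_{t}}S(c)$, i.e. that the map is well defined between the asserted spaces. By the change of variables $y=tx$ one has $\int_{\mathbb{R}^{N}}u_{t}\psi_{t}\,dx=\int_{\mathbb{R}^{N}}u\psi\,dy$, so $\int u\psi\,dx=0$ forces $\int u_{t}\psi_{t}\,dx=0$. Next I would record the scaling identities already used in this paper, namely $D(\psi_{t})=D(\psi)$ and $A(\psi_{t})=t^{2}A(\psi)$, which give $\|\psi_{t}\|_{H^{1}}^{2}=t^{2}A(\psi)+D(\psi)\le\max\{t^{2},1\}\,\|\psi\|_{H^{1}}^{2}$; hence $L_{t}$ is a bounded linear operator.

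For the inverse I would simply take $L_{1/t}\colon\phi\mapsto\phi_{1/t}$. Since $(\psi_{t})_{1/t}=\psi$ and $(\phi_{1/t})_{t}=\phi$, the operator $L_{1/t}$ is a two-sided inverse of $L_{t}$; by the same change of variables it maps $T_{u_{t}}S(c)$ back into $T_{u}S(c)$, and the identity $\|\phi_{1/t}\|_{H^{1}}^{2}=t^{-2}A(\phi)+D(\phi)\le\max\{t^{-2},1\}\,\|\phi\|_{H^{1}}^{2}$ shows it is bounded as well. Therefore $\psi\mapsto\psi_{t^{-}(u)}$ is a topological isomorphism from $T_{u}S(c)$ onto $T_{u_{t^{-}(u)}}S(c)$.

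There is no genuine obstacle here: the only point requiring attention is that the dilation respects the $L^{2}$-orthogonality condition defining the tangent spaces, and this is immediate from the measure-preserving nature of the rescaling on $L^{2}(\mathbb{R}^{N})$; the rest is the elementary estimate $\|\psi_{t}\|_{H^{1}}^{2}=t^{2}A(\psi)+D(\psi)$ together with its analogue for $t^{-1}$, both of which only use that $t=t^{-}(u)\in(0,\infty)$.
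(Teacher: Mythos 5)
Your proposal is correct and follows essentially the same route as the paper: the well-definedness check via the change of variables $y=tx$ is exactly the computation the paper gives, and the explicit bounded inverse $\phi\mapsto\phi_{1/t}$ together with the estimates $\|\psi_{t}\|_{H^{1}}^{2}=t^{2}A(\psi)+D(\psi)$ is precisely the ``standard'' remainder the paper delegates to the cited reference. You have simply written out those standard details in full, which is fine.
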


\begin{proof}
Clearly the map is linear. For $\phi \in T_{u}S_{r}(c),$ we have
\begin{equation*}
\int_{\mathbb{R}^{N}}u_{s}(x)\phi _{s}(x)dx=\int_{\mathbb{R}%
^{N}}s^{N/2}u(sx)s^{N/2}\phi (sx)dx=\int_{\mathbb{R}^{N}}u(y)\phi (y)dy=0,
\end{equation*}%
which implies that $\phi _{s}\in T_{u_{s}}S_{r}(c)$ and the map is
well-defined. The rest of the proof is standard, see for example \cite[Lemma
3.6]{BS3}.
\end{proof}

\begin{lemma}
\label{L3-13} It holds $(J^{-})^{\prime }(u)[\phi ]=E_{\gamma ,\mu }^{\prime
}(u_{s^{-}(u)})[\phi _{s^{-}(u)}]$ for any $u\in S_{r}(c)$ and $\phi \in
T_{u}S_{r}(c).$
\end{lemma}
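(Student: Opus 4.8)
The plan is to view $J^{-}$ as the composition $J^{-}=E_{\gamma ,\mu }\circ \Phi $, where $\Phi :S(c)\rightarrow S(c)$ is the scaling map $\Phi (v):=v_{t^{-}(v)}$, and to differentiate it by the chain rule, using the fact that $\Phi (v)$ always lies on the Pohozaev manifold $\mathcal{M}(c)$ to cancel the extra term coming from the dependence of $t^{-}$ on $v$. By Lemma \ref{L3-6} the map $v\mapsto t^{-}(v)$ is of class $C^{1}$ and, since $(v,t)\mapsto v_{t}=t^{N/2}v(t\,\cdot )$ is smooth from $S(c)\times (0,\infty )$ into $H^{1}(\mathbb{R}^{N})$, the map $\Phi $ is $C^{1}$; this re-establishes that $J^{-}\in C^{1}(S(c))$.

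First I would compute $\Phi ^{\prime }(u)[\psi ]$ for $u\in S(c)$ and $\psi \in T_{u}S(c)$. Choosing any $C^{1}$ curve $s\mapsto \zeta (s)\in S(c)$ with $\zeta (0)=u$ and $\dot{\zeta }(0)=\psi $, and writing $\beta (s):=t^{-}(\zeta (s))$, so that $\beta (0)=t^{-}(u)$ and $\dot{\beta }(0)=(t^{-})^{\prime }(u)[\psi ]$, and using that $v\mapsto v_{t}$ is linear in $v$, the chain rule gives
\begin{equation*}
\Phi ^{\prime }(u)[\psi ]=\frac{d}{ds}\Big|_{s=0}\zeta (s)_{\beta (s)}=\psi _{t^{-}(u)}+(t^{-})^{\prime }(u)[\psi ]\,\frac{d}{dt}\Big|_{t=t^{-}(u)}u_{t}.
\end{equation*}
Here $\psi _{t^{-}(u)}\in T_{u_{t^{-}(u)}}S(c)$ by Lemma \ref{L3-12}, while $\frac{d}{dt}|_{t=t^{-}(u)}u_{t}\in T_{u_{t^{-}(u)}}S(c)$ as well, because $\Vert u_{t}\Vert _{2}^{2}=\Vert u\Vert _{2}^{2}$ does not depend on $t$.

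Applying the chain rule to $J^{-}=E_{\gamma ,\mu }\circ \Phi $ together with the linearity of $E_{\gamma ,\mu }^{\prime }(u_{t^{-}(u)})$ then yields
\begin{equation*}
(J^{-})^{\prime }(u)[\psi ]=E_{\gamma ,\mu }^{\prime }(u_{t^{-}(u)})[\psi _{t^{-}(u)}]+(t^{-})^{\prime }(u)[\psi ]\,E_{\gamma ,\mu }^{\prime }(u_{t^{-}(u)})\Big[\frac{d}{dt}\Big|_{t=t^{-}(u)}u_{t}\Big].
\end{equation*}
Finally I would note that the second summand vanishes: by the chain rule once more, $E_{\gamma ,\mu }^{\prime }(u_{t^{-}(u)})[\frac{d}{dt}|_{t=t^{-}(u)}u_{t}]=\frac{d}{dt}|_{t=t^{-}(u)}E_{\gamma ,\mu }(u_{t})=g_{u}^{\prime }(t^{-}(u))$, which is $0$ because $t^{-}(u)$ was constructed (Lemma \ref{L3-9}) precisely so that $u_{t^{-}(u)}\in \mathcal{M}(c)$, i.e.\ so that $t^{-}(u)$ is a critical point of the fibering map $t\mapsto g_{u}(t)$. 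This gives $(J^{-})^{\prime }(u)[\psi ]=E_{\gamma ,\mu }^{\prime }(u_{t^{-}(u)})[\psi _{t^{-}(u)}]$, as claimed; one may equally read $E_{\gamma ,\mu }^{\prime }$ as $E_{\gamma ,\mu }^{\prime }|_{S(c)}$ throughout, since every vector appearing is tangent to $S(c)$. There is no genuine obstacle here: the only point requiring attention is the legitimacy of interchanging the various derivatives, and this is guaranteed by the $C^{1}$ regularity of $t^{-}(\cdot )$, of the scaling map, and of $E_{\gamma ,\mu }$ on $H^{1}(\mathbb{R}^{N})$.
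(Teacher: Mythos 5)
Your overall strategy is sound, and the key structural observation---that the term proportional to $(t^{-})^{\prime }(u)[\psi ]$ is killed by $g_{u}^{\prime }(t^{-}(u))=0$---is exactly what makes the lemma true. However, there is a genuine gap in the justification of the chain rule: the map $(v,t)\mapsto v_{t}=t^{N/2}v(t\,\cdot )$ is \emph{not} differentiable in $t$ as a map into $H^{1}(\mathbb{R}^{N})$ for a general $v\in H^{1}$. Formally $\frac{d}{dt}u_{t}=t^{N/2-1}\bigl(\tfrac{N}{2}u(t\cdot )+t\,x\cdot \nabla u(t\cdot )\bigr)$, and $x\cdot \nabla u$ need not belong to $L^{2}(\mathbb{R}^{N})$ when $u$ is merely in $H^{1}$. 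So the vector $\frac{d}{dt}|_{t=t^{-}(u)}u_{t}$ on which your decomposition of $\Phi ^{\prime }(u)[\psi ]$ relies does not in general exist in $H^{1}$, the claim that $(v,t)\mapsto v_{t}$ is smooth into $H^{1}$ is false, and $\Phi (v)=v_{t^{-}(v)}$ need not be $C^{1}$ into $H^{1}$ even though the composite $J^{-}=E_{\gamma ,\mu }\circ \Phi$ is.

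The fix keeps your idea but moves it to the scalar level. Write $J^{-}(v)=g_{v}(t^{-}(v))$ with $g_{v}(t)=\frac{t^{2}}{2}A(v)-\frac{\gamma t^{Np-N-\alpha }}{2p}B(v)-\frac{\mu t^{N(q-2)/2}}{q}C(v)$, a smooth function of $\bigl(A(v),B(v),C(v),t^{-}(v)\bigr)$, each of which is $C^{1}$ in $v$. Differentiating along a curve $\zeta (s)$ with $\zeta (0)=u$, $\dot{\zeta }(0)=\psi $ produces the three terms $\frac{t^{2}}{2}A^{\prime }(u)[\psi ]-\frac{\gamma t^{Np-N-\alpha }}{2p}B^{\prime }(u)[\psi ]-\frac{\mu t^{N(q-2)/2}}{q}C^{\prime }(u)[\psi ]$ with $t=t^{-}(u)$, plus $g_{u}^{\prime }(t^{-}(u))\,(t^{-})^{\prime }(u)[\psi ]=0$; and the scaling identities $A^{\prime }(u_{t})[\psi _{t}]=t^{2}A^{\prime }(u)[\psi ]$, $B^{\prime }(u_{t})[\psi _{t}]=t^{Np-N-\alpha }B^{\prime }(u)[\psi ]$, $C^{\prime }(u_{t})[\psi _{t}]=t^{N(q-2)/2}C^{\prime }(u)[\psi ]$ (obtained by differentiating $A(v_{t})=t^{2}A(v)$, etc., in $v$ and using linearity of $v\mapsto v_{t}$) recombine these three terms into $E_{\gamma ,\mu }^{\prime }(u_{t^{-}(u)})[\psi _{t^{-}(u)}]$. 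Alternatively, one can run a two-sided mean-value estimate on the difference quotient of $J^{-}$, exploiting that $t^{-}$ is a maximum point of the fibering map, as in the proof of \cite[Lemma 3.15]{CJ} to which the paper defers; either route avoids ever differentiating $t\mapsto u_{t}$ in $H^{1}$.
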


\begin{proof}
The proof is similar to that of \cite[Lemma 3.15]{CJ}, we omit it here.
\end{proof}

\begin{lemma}
\label{L4-15} Let $\mathcal{F}$ be a homotopy stable family of compact
subsets of $S_{r}(c)$ with closed boundary $\Theta $ and let
\begin{equation*}
e_{\mathcal{F}}^{-}:=\inf_{H\in \mathcal{F}}\max_{u\in H}J^{-}(u).
\end{equation*}%
Assume that $\Theta $ is contained in a connected component of $\mathcal{M}%
_{r}^{-}(c)$ and $\max \{\sup J^{-}(\Theta ),0\}<e_{\mathcal{F}}^{-}<\infty $%
. Then there exists a Palais--Smale sequence $\{u_{n}\}\subset \mathcal{M}%
_{r}^{-}(c)$ for $E_{\gamma ,\mu }$ restricted to $S_{r}(c)$ at level $e_{%
\mathcal{F}}^{-}.$
\end{lemma}

\begin{proof}
First of all, we take $\{D_{n}\}\subset \mathcal{F}$ such that $\max_{u\in
D_{n}}J^{-}(u)<e_{\mathcal{F}}^{-}+\frac{1}{n}$ and define $\eta
:[0,1]\times S_{r}(c)\rightarrow S_{r}(c)$ by%
\begin{equation*}
\eta (t,u)=u_{1-t+ts^{-}(u)}.
\end{equation*}%
Since $s^{-}(u)=1$ for any $u\in \mathcal{M}_{r}^{-}(c)$ and $\Theta \subset
\mathcal{M}_{r}^{-}(c)$, we have $\eta (t,u)=u$ for $(t,u)\in (\{0\}\times
S_{r}(c))\cup ([0,1]\times \Theta )$. Also note that $\eta $ is continuous.
Then it follows from the definition of $\mathcal{F}$ that $\mathbf{A}%
_{n}:=\eta (\{1\}\times D_{n})=\{u_{s^{-}(u)}$ $|$ $u\in D_{n}\}\in \mathcal{%
F}.$ Clearly, $\mathbf{A}_{n}\subset \mathcal{M}_{r}^{-}(c)$ for all $n\in
\mathbb{N}$. Let $v\in \mathbf{A}_{n}$. Then $v=u_{s^{-}(u)}$ for some $u\in
D_{n},$ and $J^{-}(u)=J^{-}(v)$, which shows that $\max_{\mathbf{A}%
_{n}}J^{-}=\max_{D_{n}}J^{-}.$ Thus, $\{\mathbf{A}_{n}\}\subset \mathcal{M}%
_{r}^{-}(c)$ is another minimizing sequence of $e_{\mathcal{F}}^{-}$.
Applying Lemma \ref{L2-6} yields a Palais--Smale sequence $\{\widehat{u}%
_{n}\}$ for $J^{-}$ on $S_{r}(c)$ at level $e_{\mathcal{F}}^{-}$ satisfying $%
\text{dist}_{H^{1}(\mathbb{R}^{N})}(\widehat{u}_{n},\mathbf{A}%
_{n})\rightarrow 0$ as $n\rightarrow \infty $. Setting $t_{n}=s^{-}(\widehat{%
u}_{n})$ and $u_{n}=(\widehat{u}_{n})_{s_{n}}\in \mathcal{M}_{r}^{-}(c)$. We
claim that there exists a constant $C_{0}>0$ such that
\begin{equation}
\frac{1}{C_{0}}\leq s_{n}^{2}\leq C_{0}  \label{e3-23}
\end{equation}%
for $n\in \mathbb{N}$ large enough. Indeed, note that
\begin{equation}
s_{n}^{2}=\frac{A((\widehat{u}_{n})_{s_{n}})}{A(\widehat{u}_{n})}=\frac{%
A(u_{n})}{A(\widehat{u}_{n})}.  \label{e3-24}
\end{equation}%
Since $E_{\gamma ,\mu }(u_{n})=J^{-}(\widehat{u}_{n})\rightarrow e_{\mathcal{%
F}}^{-}$ as $n\rightarrow \infty $, it follows from Lemma \ref{L4-9} that
there exists a constant $M_{0}>0$ such that
\begin{equation}
\frac{1}{M_{0}}\leq A(u_{n})\leq M_{0}.  \label{e3-25}
\end{equation}%
On the other hand, since $\{\mathbf{A}_{n}\}\subset \mathcal{M}_{r}^{-}(c)$
is a minimizing sequence for $e_{\mathcal{F}}^{-}$ and $E_{\gamma ,\mu }$ is
coercive on $\mathcal{M}_{r}^{-}(c)$, we conclude that $\{\mathbf{A}_{n}\}$
is uniformly bounded in $H^{1}(\mathbb{R}^{N})$, and combining the fact of $%
\text{dist}_{H^{1}(\mathbb{R}^{N})}\left( \widehat{u}_{n},\mathbf{A}%
_{n}\right) \rightarrow 0$ as $n\rightarrow \infty ,$ gives $\sup_{n}A(%
\widehat{u}_{n})<\infty $. Also, since $\{\mathbf{A}_{n}\}$ is compact for
every $n\in \mathbb{N}$, there exists a $v_{n}\in \mathbf{A}_{n}$ such that
dist$_{H^{1}(\mathbb{R}^{N})}(\widehat{u}_{n},\mathbf{A}_{n})=\Vert v_{n}-%
\widehat{u}_{n}\Vert _{H^{1}}.$ Then it follows from Lemma \ref{L4-9} that
for some $\sigma >0,$
\begin{equation}
A(\widehat{u}_{n})\geq A(v_{n})-A(\widehat{u}_{n}-v_{n})\geq \frac{\sigma }{2%
}.  \label{e3-26}
\end{equation}%
Hence, by (\ref{e3-24})--(\ref{e3-26}), we proved the claim.

Next, we show that $\{u_{n}\}\subset \mathcal{M}_{r}^{-}(c)$ is a
Palais--Smale sequence for $E_{\gamma ,\mu }$ on $S_{r}(c)$ at level $e_{%
\mathcal{F}}^{-}$. Denote by $\Vert \cdot \Vert _{\ast }$ the dual norm of $%
(T_{u_{n}}S_{r}(c))^{\ast }$. Then we have
\begin{equation*}
\Vert E_{\gamma ,\mu }^{\prime }(u_{n})\Vert _{\ast }=\sup_{\phi \in
T_{u_{n}}S_{r}(c),\Vert \phi \Vert \leq 1}|E_{\gamma ,\mu }^{\prime
}(u_{n})[\phi ]|=\sup_{\phi \in T_{u_{n}}S_{r}(c),\Vert \phi \Vert \leq
1}|E_{\gamma ,\mu }^{\prime }(u_{n})[(\phi _{-s_{n}})_{s_{n}}]|.
\end{equation*}%
By Lemma \ref{L3-12}, we obtain that $T_{\widehat{u}_{n}}S_{r}(c)\rightarrow
T_{u_{n}}S_{r}(c)$ defined by $\phi \rightarrow \phi _{t_{n}}$ is an
isomorphism. Moreover, it follows from Lemma \ref{L3-13} that
\begin{equation*}
(J^{-})^{\prime }(\widehat{u}_{n})[\phi _{-s_{n}}]=E_{\gamma ,\mu }^{\prime
}((\widehat{u}_{n})_{s_{n}})[(\phi _{-s_{n}})_{s_{n}}]=E_{\gamma ,\mu
}^{\prime }(u_{n})[(\phi _{-s_{n}})_{s_{n}}].
\end{equation*}%
Thus, it follows that
\begin{equation}
\Vert E_{\gamma ,\mu }^{\prime }(u_{n})\Vert _{\ast }=\sup_{\phi \in
T_{u_{n}}S_{r}(c),\Vert \phi \Vert \leq 1}|(J^{-})^{\prime }(\widehat{u}%
_{n})[\psi _{-s_{n}}]|.  \label{e3-27}
\end{equation}%
Note that $\Vert \psi _{-s_{n}}\Vert _{H^{1}}\leq C\Vert \psi \Vert
_{H^{1}}\leq C$ for some $C>0$ by (\ref{e3-23}). So by (\ref{e3-27}) we
deduce that $\{u_{n}\}\subset \mathcal{M}_{r}^{-}(c)$ is a Palais--Smale
sequence for $E_{\gamma ,\mu }$ on $\mathcal{M}_{r}^{-}(c)$ at level $e_{%
\mathcal{F}}^{-}$. The proof is complete.
\end{proof}

\begin{lemma}
\label{L4-18} Let $N\geq 2,\overline{p}<p<2_{\alpha }^{\ast },\overline{q}%
<q<2^{\ast }$ with $2Np-Nq-2\alpha \leq 0.$ Then for $|\gamma |c^{\frac{%
N+\alpha -(N-2)p}{2}}$ sufficiently small, there exists a Palais--Smale
sequence $\{u_{n}\}\subset \mathcal{M}_{r}^{-}(c)$ for $E_{\gamma ,\mu }$
restricted to $S_{r}(c)$ at the level $m_{r}^{-}(c)>0.$
\end{lemma}

\begin{proof}
Let $\{u_{n}\}\subset \mathcal{M}_{r}^{-}(c)$. By Lemma \ref{L4-15}, we
choose the set $\overline{\mathcal{F}}$ of all singletons belonging to $%
S_{r}(c)$ and $\Theta =\emptyset $, which is clearly a homotopy stable
family of compact subsets of $S_{r}(c)$ (without boundary). Note that $e_{%
\overline{\mathcal{F}}}^{-}=\inf_{H\in \overline{\mathcal{F}}}\max_{u\in
H}J^{-}(u)=\inf_{u\in S_{r}(c)}J^{-}(u)=\inf_{u\in \mathcal{M}%
_{r}^{-}(c)}E_{\gamma ,\mu }(u)=m_{r}^{-}(c).$ Then the lemma follows
directly from Lemma \ref{L4-15}. This completes the proof.
\end{proof}

When $\overline{p}<p\leq 2_{\alpha }^{\ast }$ and $\overline{q}<q<2^{\ast }$
with $2Np-Nq-2\alpha >0$, it is very difficult to verify that $\mathcal{M}%
^{0}(c)=\emptyset $ and that $E_{\gamma ,\mu }$ is bounded from below on $%
\mathcal{M}(c)$. To achieve this goal, we need to introduce a novel natural
constraint manifold inspired by \cite{CJ}.

Set
\begin{equation}
\Gamma :=\frac{q(Np-N-\alpha -2)}{2Np-Nq-2\alpha }\left( \frac{|\gamma |%
\overline{B}(2Np-Nq-2\alpha )(Np-N-\alpha )c^{\frac{N+\alpha -p(N-2)}{2}}}{%
2p(N(q-2)-4)}\right) ^{\frac{N(q-2)-4}{2(Np-N-\alpha -2)}}.  \label{e4-32}
\end{equation}%
Clearly, $\Gamma >0$ and $\Gamma \rightarrow 0$ as $|\gamma |\rightarrow 0,$
since $\overline{p}<p\leq 2_{\alpha }^{\ast }$ and $\overline{q}<q<2^{\ast }$
with $2Np-Nq-2\alpha >0.$ We define a set $V$ by%
\begin{eqnarray}
V :=\left\{ u\in S(c)\text{ }|\text{\ }\mu C(u)>\Gamma A(u)^{\frac{N(q-2)}{4}%
}\right\} =\left\{ u\in S(c)\text{ }|\text{\ }(s_{u}^{\ast
})^{2}A(u)<K\right\} ,  \label{e4-34}
\end{eqnarray}%
where
\begin{equation}
s_{u}^{\ast }:=\left( \frac{4q(Np-N-\alpha -2)A(u)}{\mu
N(q-2)(2Np-Nq-2\alpha )C(u)}\right) ^{\frac{2}{N(q-2)-4}}  \label{e4-30}
\end{equation}%
and
\begin{equation*}
K:=\left( \frac{4}{(q-2)N}\right) ^{\frac{4}{N(q-2)-4}}\left( \frac{%
2p((q-2)N-4)}{|\gamma|\overline{B}(2Np-Nq-2\alpha )(Np-N-\alpha )c^{\frac{%
N+\alpha -p(N-2)}{2}}}\right) ^{\frac{2}{Np-N-\alpha -2}}.
\end{equation*}

To show that the set $V$ is not empty, we consider the following problem:
\begin{equation}
\left\{
\begin{array}{ll}
-\Delta u+\lambda u=\mu |u|^{q-2}u & \text{ in }\mathbb{R}^{N}, \\
\int_{\mathbb{R}^{N}}|u|^{2}dx=c>0, &
\end{array}%
\right.  \tag{$E_{\mu }^{\infty }$}
\end{equation}%
where $N\geq 3$ and $\overline{q}<q<2^{\ast }$. From \cite{J} we know that $%
(E_{\mu }^{\infty })$ admits a ground state normalized solution $\omega _{0}$
with positive energy $\beta $. Then $\beta =\inf_{u\in \mathbf{N}_{\mu
}(c)}G_{\mu }(u)=G_{\mu }(\omega _{0})>0,$ where $G_{\mu }$ is the energy
functional related to $(E_{\mu }^{\infty })$ given by
\begin{equation*}
G_{\mu }(u)=\frac{1}{2}\int_{\mathbb{R}^{N}}|\nabla u|^{2}dx-\frac{\mu }{q}%
\int_{\mathbb{R}^{N}}|u|^{q}dx
\end{equation*}%
and
\begin{equation*}
\mathbf{N}_{\mu }(c)=\left\{ u\in S(c)\text{ }|\text{ }P_{\mu }(u)=0\right\}
\end{equation*}%
with $P_{\mu }(u)=\int_{\mathbb{R}^{N}}|\nabla u|^{2}dx-\frac{\mu (q-2)N}{2q}%
\int_{\mathbb{R}^{N}}|u|^{q}dx$. Clearly, it holds
\begin{equation*}
\beta =\frac{1}{2}A(\omega _{0})-\frac{\mu }{q}C(\omega _{0})=\frac{N(q-2)-4%
}{2N(q-2)}A(\omega _{0})>0.
\end{equation*}

Set
\begin{eqnarray*}
\gamma _{\ast } &:&=\left[ \left( \frac{N(q-2)-4}{2N\beta (q-2)}\right)
^{1/2}\left( \frac{2(2Np-Nq-2\alpha )}{N(Np-N-\alpha -2)(q-2)}\right) ^{%
\frac{2}{N(q-2)-4}}\right] ^{Np-N-\alpha -2} \\
&&\text{ \ }\times \left( \frac{2p(N(q-2)-4)}{\overline{B} (Np-N-\alpha
)(2Np-Nq-2\alpha )c^{\frac{N+\alpha -p(N-2)}{2}}}\right) .
\end{eqnarray*}%
For $0<\left\vert \gamma \right\vert <\gamma _{\ast }$, we have $\mu
C(\omega _{0})=\frac{2q}{N(q-2)}A(\omega _{0})=\frac{4q\beta }{N(q-2)-4}%
>\Gamma A(\omega _{0})^{\frac{N(q-2)}{4}},$ which implies that $\omega
_{0}\in V,$ namely, $V$ is nonempty.

Now we define the set $\Lambda (c):=\left\{ u\in V\text{ }|\text{\ }%
g_{u}^{\prime }(1)=0\right\} ,$ which is a natural constraint. Similar to
Pohozaev manifold $\mathcal{M}(c),$ we can split $\Lambda (c)$ into three
parts corresponding to local minima, local maxima and points of inflection
as follows%
\begin{eqnarray*}
\Lambda ^{+}(c) &:&=\left\{ u\in V\text{ }|\text{\ }g_{u}^{\prime
}(1)=0,g_{u}^{\prime \prime }(1)>0\right\} ; \\
\Lambda ^{-}(c) &:&=\left\{ u\in V\text{ }|\text{\ }g_{u}^{\prime
}(1)=0,g_{u}^{\prime \prime }(1)<0\right\} ; \\
\Lambda ^{0}(c) &:&=\left\{ u\in V\text{ }|\text{\ }g_{u}^{\prime
}(1)=0,g_{u}^{\prime \prime }(1)=0\right\} .
\end{eqnarray*}%
Then we have the following results.

\begin{lemma}
\label{L4-2} Let $N\geq 2,\overline{p}<p\leq 2_{\alpha }^{\ast },\overline{q}%
<q<2^{\ast }$ with $2Np-qN-2\alpha >0$. Then $\Lambda ^{0}(c)=\emptyset $
for all $c>0.$
\end{lemma}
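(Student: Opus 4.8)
The plan is to argue by contradiction. Suppose $u\in\Lambda^{0}(c)$, so that $g_{u}^{\prime}(1)=0$, $g_{u}^{\prime\prime}(1)=0$ and $u\in V$; from the last two facts I will extract a lower bound and an upper bound for $A(u)$ that are incompatible. Since $u\in S(c)$ with $c>0$ we have $A(u),B(u),C(u)>0$, so every cancellation below is legitimate, and in the present range $\frac{N+\alpha+2}{N}<p<2_{\alpha}^{\ast}$, $2+\frac{4}{N}<q<2^{\ast}$, $2Np-Nq-2\alpha>0$ the three quantities $Np-N-\alpha-2$, $N(q-2)-4$ and $2Np-Nq-2\alpha$ are all positive; these sign facts fix the directions of the inequalities that follow. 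Recall also that on $\mathcal{M}(c)$ — hence for $u\in\Lambda^{0}(c)$, since $g_{u}^{\prime}(1)=0$ — the quantity $g_{u}^{\prime\prime}(1)$ admits the representations $(\ref{e2-7})$ and $(\ref{e2-8})$.

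For the lower bound, I would use $(\ref{e2-8})$ together with $g_{u}^{\prime\prime}(1)=0$ and $\gamma<0$ to solve for $B(u)$, obtaining $B(u)=\frac{2p(N(q-2)-4)}{|\gamma|(Np-N-\alpha)(2Np-Nq-2\alpha)}A(u)$. Plugging this into the Hartree-type Gagliardo--Nirenberg inequality $(\ref{e2-3})$ with $\Vert u\Vert_{2}^{2}=c$ and cancelling one power of $A(u)$ gives, since $Np-N-\alpha-2>0$,
\[
A(u)\ \geq\ L:=\left(\frac{2(N(q-2)-4)\Vert Q_{p}\Vert_{2}^{2p-2}}{|\gamma|(Np-N-\alpha)(2Np-Nq-2\alpha)\,c^{\frac{N+\alpha-p(N-2)}{2}}}\right)^{\frac{2}{Np-N-\alpha-2}}.
\]

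For the upper bound, I would use instead $(\ref{e2-7})$ together with $g_{u}^{\prime\prime}(1)=0$, which yields $\mu C(u)=\frac{4q(Np-N-\alpha-2)}{N(q-2)(2Np-Nq-2\alpha)}A(u)$. Since $u\in V$ means $\mu C(u)>\Gamma A(u)^{N(q-2)/4}$ with $\Gamma$ as in $(\ref{e4-32})$, substituting, cancelling $A(u)$ (using $N(q-2)-4>0$), and inserting the explicit value of $\Gamma$ leads, after a short computation with exponents, exactly to $A(u)<K$, where $K$ is the constant introduced right after $(\ref{e4-34})$; concretely $K=\left(\frac{4}{N(q-2)}\right)^{\frac{4}{N(q-2)-4}}L$. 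It then remains to observe that $q>2+\frac{4}{N}$ forces $N(q-2)>4$, so $0<\frac{4}{N(q-2)}<1$ while the exponent $\frac{4}{N(q-2)-4}$ is positive; hence $K<L$. Combining the two bounds we obtain $L\leq A(u)<K<L$, a contradiction, so $\Lambda^{0}(c)=\emptyset$ for every $c>0$.

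The main thing to be careful about is the bookkeeping in the upper-bound step: one must check that the algebraic factors collapse so that the bound on $A(u)$ coming from $u\in V$ is exactly the factor $\bigl(\tfrac{4}{N(q-2)}\bigr)^{4/(N(q-2)-4)}$ times the Gagliardo--Nirenberg lower bound $L$. This is in fact how the constant $\Gamma$ in the definition of $V$ was chosen, so the verification is just a matching of exponents; once it is done, the mass-supercriticality $q>2+\frac{4}{N}$ closes the argument automatically, and no information beyond $g_{u}^{\prime}(1)=0$, $g_{u}^{\prime\prime}(1)=0$ and $u\in V$ is used.
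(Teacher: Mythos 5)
Your proposal is correct and follows essentially the same route as the paper's proof: the lower bound $A(u)\geq L$ comes from $g_{u}''(1)=0$ in the form (\ref{e2-8}) combined with the Hartree-type Gagliardo--Nirenberg inequality (\ref{e2-3}), and the upper bound $A(u)<K=\bigl(\tfrac{4}{N(q-2)}\bigr)^{4/(N(q-2)-4)}L$ comes from (\ref{e2-7}) together with the defining inequality $\mu C(u)>\Gamma A(u)^{N(q-2)/4}$ of $V$, exactly matching (\ref{e4-1}) and (\ref{e4-2}). Your closing observation that $K<L$ because $N(q-2)>4$ is just a rephrasing of the paper's remark that the two bounds would force $q<2+\frac{4}{N}$.
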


\begin{proof}
Suppose that $\Lambda ^{0}(c)\neq \emptyset $. Then for $u\in \Lambda
^{0}(c) $, it follows from (\ref{e2-1}) and (\ref{e2-7}) that%
\begin{eqnarray*}
\frac{N(q-2)-4}{2}A(u) &=&\frac{|\gamma |(Np-N-\alpha )(2Np-Nq-2\alpha )}{4p}%
B(u) \\
&\leq &\frac{|\gamma |\overline{B}(Np-N-\alpha )(2Np-Nq-2\alpha )}{4p}c^{%
\frac{N+\alpha -p(N-2)}{2}}A(u)^{\frac{Np-N-\alpha }{2}},
\end{eqnarray*}%
which indicates that
\begin{equation}
A(u)\geq \left[ \frac{2p(N(q-2)-4)}{|\gamma |\overline{B}(Np-N-\alpha
)(2Np-Nq-2\alpha )}c^{-\frac{N+\alpha -p(N-2)}{2}}\right] ^{\frac{2}{%
Np-N-\alpha -2}}.  \label{e4-1}
\end{equation}%
On the other hand, by using (\ref{e2-8}) and the fact of $\mu C(u)>\Gamma
A(u)^{\frac{N(q-2)}{4}},$ we have
\begin{equation}
A(u)<\left( \frac{4}{N(q-2)}\right) ^{\frac{4}{N(q-2)-4}}\left( \frac{%
2p(N(q-2)-4)}{|\gamma |\overline{B}(2Np-Nq-2\alpha )(Np-N-\alpha )c^{\frac{%
N+\alpha -p(N-2)}{2}}}\right) ^{\frac{2}{Np-N-\alpha -2}}.  \label{e4-2}
\end{equation}%
Hence, using (\ref{e4-1}) and (\ref{e4-2}) leads to $q<\overline{q},$ which
is a contradiction. This completes the proof.
\end{proof}

For $u\in V$, we define
\begin{equation}
\widehat{s}_{u}:=\left( \frac{2qA(u)}{\mu N(q-2)C(u)}\right) ^{\frac{2}{%
N(q-2)-4}}.  \label{e4-6}
\end{equation}

\begin{lemma}
\label{L4-3} Let $N\geq 2,\overline{p}<p\leq 2_{\alpha }^{\ast },\overline{q}%
<q<2^{\ast }$ with $2Np-qN-2\alpha >0$. Then for each $u\in V$ there exist
two positive constants $s_{\ast }^{+}(u)$ and $s_{\ast }^{-}(u)$ satisfying $%
\widehat{s}_{u}<s_{\ast }^{-}(u)<s_{u}^{\ast }<s_{\ast }^{+}(u)$ such that $%
u_{s_{\ast }^{\pm }(u)}\in \Lambda ^{\pm }(c)$, where $s_{u}^{\ast }$ is as
in (\ref{e4-30}).
\end{lemma}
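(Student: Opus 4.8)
The plan is to convert the fibering equation $g_u'(t)=0$, $t>0$, into a scalar equation whose monotonicity structure is transparent, in the spirit of the proof of Lemma~\ref{L3-9} but adapted to the sign $\gamma<0$ and to the constraint $V$. Since $\gamma<0$, dividing $g_u'(t)=0$ by $t^{Np-N-\alpha-1}$ rewrites it as $\Psi_u(t)=\kappa$, where $\kappa:=\frac{|\gamma|(Np-N-\alpha)}{2p}B(u)>0$ and
\begin{equation*}
\Psi_u(t):=\frac{\mu N(q-2)}{2q}t^{-\frac{2Np-Nq-2\alpha}{2}}C(u)-t^{-(Np-N-\alpha-2)}A(u),
\end{equation*}
so that $u_t\in\mathcal{M}(c)$ if and only if $\Psi_u(t)=\kappa$. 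Under the present assumptions one has $2<\frac{N(q-2)}{2}<Np-N-\alpha$, hence both exponents in $\Psi_u$ are negative, $\Psi_u(0^+)=-\infty$ and $\Psi_u(+\infty)=0^+$; a direct computation shows that $\Psi_u'$ vanishes at the single point $t=t_u^\ast$ given in (\ref{e4-30}), with $\Psi_u'>0$ on $(0,t_u^\ast)$ and $\Psi_u'<0$ on $(t_u^\ast,\infty)$. Thus $\Psi_u$ increases strictly from $-\infty$ to its unique maximum $\Psi_u(t_u^\ast)$ and then decreases strictly to $0$.

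The key step is to show that $u\in V$ forces $\Psi_u(t_u^\ast)>\kappa$. Evaluating $\Psi_u$ at $t_u^\ast$ and using $\Psi_u'(t_u^\ast)=0$ gives the closed form $\Psi_u(t_u^\ast)=\frac{(N(q-2)-4)A(u)}{2Np-Nq-2\alpha}(t_u^\ast)^{-(Np-N-\alpha-2)}$, so by the Hartree--Gagliardo--Nirenberg inequality (\ref{e2-3}), which bounds $\kappa$ from above, the inequality $\Psi_u(t_u^\ast)>\kappa$ reduces to $(t_u^\ast)^2A(u)<\left(\frac{2\Vert Q_p\Vert_2^{2p-2}(N(q-2)-4)}{|\gamma|(Np-N-\alpha)(2Np-Nq-2\alpha)c^{(N+\alpha-p(N-2))/2}}\right)^{2/(Np-N-\alpha-2)}$. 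Since $q>2+\frac{4}{N}$ one has $\left(\frac{4}{(q-2)N}\right)^{4/(N(q-2)-4)}<1$, so this last bound is strictly larger than the constant $K$ appearing in (\ref{e4-34}); hence $(t_u^\ast)^2A(u)<K$, i.e.\ $u\in V$, is more than enough. Consequently $\Psi_u(t)=\kappa$ has exactly two roots $t_\ast^-(u)<t_u^\ast<t_\ast^+(u)$, one on each monotone branch, and $u_{t_\ast^\pm(u)}\in\mathcal{M}(c)$.

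To locate the roots relative to $\widehat t_u$, I would evaluate $\Psi_u(\widehat t_u)$: the defining identity $\frac{\mu N(q-2)}{2q}C(u)\,\widehat t_u^{\,(N(q-2)-4)/2}=A(u)$ makes the two terms of $\Psi_u(\widehat t_u)$ cancel, so $\Psi_u(\widehat t_u)=0$. A comparison of the two explicit constants (using $N(q-2)>4$) gives $\widehat t_u<t_u^\ast$; since $\Psi_u$ is strictly increasing on $(0,t_u^\ast)$ with $\Psi_u(t_\ast^-(u))=\kappa>0=\Psi_u(\widehat t_u)$, we obtain $\widehat t_u<t_\ast^-(u)$, hence the full chain $\widehat t_u<t_\ast^-(u)<t_u^\ast<t_\ast^+(u)$. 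For the index of the two constrained critical points, note $g_u'(t)=-t^{Np-N-\alpha-1}\bigl(\Psi_u(t)-\kappa\bigr)$, so at a root $g_u''(t)=-t^{Np-N-\alpha-1}\Psi_u'(t)$; as $g_{u_t}''(1)=t^2g_u''(t)$ and $\Psi_u'(t_\ast^-(u))>0$, $\Psi_u'(t_\ast^+(u))<0$, we get $u_{t_\ast^-(u)}\in\mathcal{M}^-(c)$ and $u_{t_\ast^+(u)}\in\mathcal{M}^+(c)$ (in particular neither lies in $\mathcal{M}^0(c)$, consistent with Lemma~\ref{L4-2}). Finally, $C(u_t)$ and $A(u_t)^{N(q-2)/4}$ are both multiplied by $t^{N(q-2)/2}$ under the dilation, so $V$ is dilation invariant; thus $u\in V$ implies $u_{t_\ast^\pm(u)}\in V$, and combining with the $\mathcal{M}^\pm(c)$ membership yields $u_{t_\ast^\pm(u)}\in\Lambda^\pm(c)$, which is the assertion.

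The main obstacle will be the bookkeeping in the second paragraph: producing the exact closed form of $\Psi_u(t_u^\ast)$, invoking (\ref{e2-3}) to control $\kappa$, and verifying that the constant $K$ in the definition of $V$ has been chosen precisely small enough (by the explicit factor $(4/((q-2)N))^{4/(N(q-2)-4)}<1$) for the strict inequality $\Psi_u(t_u^\ast)>\kappa$ to hold. Everything else — the monotonicity of $\Psi_u$, the vanishing $\Psi_u(\widehat t_u)=0$, and the sign analysis of $g_u''$ at the two roots — is a routine one-variable computation once $\Psi_u$ is in place.
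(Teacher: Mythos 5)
Your proposal is correct and follows essentially the same route as the paper: your $\Psi_u$ is exactly $-\widehat{h}$ from the paper's proof, and the key steps (single interior extremum at $t_u^{\ast}$, the closed form of the extremal value, the Hartree Gagliardo--Nirenberg bound (\ref{e2-3}) on $\kappa$, the condition $u\in V$ guaranteeing the level $\kappa$ is crossed twice, and the sign analysis of $g_u''$ at the two roots) all coincide, with you merely using the second, equivalent characterization of $V$ in (\ref{e4-34}) and being slightly more explicit about $\widehat{t}_u<t_{\ast}^{-}(u)$ and the dilation invariance of $V$.
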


\begin{proof}
Define
\begin{equation*}
\widehat{h}(s)=A(u)s^{N+\alpha +2-Np}-\frac{\mu N(q-2)}{2q}C(u)s^{\frac{%
Nq+2\alpha -2Np}{2}}\quad \text{for}\quad t>0.
\end{equation*}%
Clearly, $u_{s}\in \Lambda (c)$ if and only if $\widehat{h}(s)=\frac{\gamma
(Np-N-\alpha )}{2p}B(u)$. A straightforward calculation shows that $\widehat{%
h}(\widehat{s}_{u})=0,$ $\lim_{t\rightarrow 0^{+}}\widehat{h}(s)=\infty $
and $\lim_{s\rightarrow \infty }\widehat{h}(s)=0.$ Note that
\begin{equation*}
\widehat{h}^{\prime }(s)=s^{N+\alpha +1-Np}\left[ (N+\alpha +2-Np)A(u)-\frac{%
\mu N(q-2)(Nq+2\alpha -2Np)}{4q}C(u)s^{\frac{N(q-2)-4}{2}}\right] .
\end{equation*}%
Then we obtain that $\widehat{h}(s)$ is decreasing when $0<s<s_{u}^{\ast }$
and is increasing when $s>s_{u}^{\ast }$, which implies that
\begin{eqnarray*}
\inf_{s>0}\widehat{h}(s) &=&\widehat{h}\left( \left( \frac{4q(Np-N-\alpha
-2)A(u)}{\mu N(q-2)(2Np-Nq-2\alpha )C(u)}\right) ^{\frac{2}{N(q-2)-4}}\right)
\\
&=&-\frac{N(q-2)-4}{2Np-Nq-2\alpha }\left( \frac{4q(Np-N-\alpha -2)A(u)}{\mu
N(q-2)(2Np-Nq-2\alpha )C(u)}\right) ^{\frac{2(N+\alpha +2-Np)}{N(q-2)-4}%
}A(u).
\end{eqnarray*}%
Since $u\in V$, we have $\mu C(u)>\Gamma A(u)^{\frac{N(q-2)}{4}}.$ This
indicates that
\begin{equation*}
\inf_{s>0}\widehat{h}(s)<\frac{\gamma \overline{B}(Np-N-\alpha )}{2p}A(u)^{%
\frac{Np-N-\alpha }{2}}c^{\frac{N+\alpha -p(N-2)}{2}}\leq \frac{\gamma
(Np-N-\alpha )}{2p}B(u),
\end{equation*}%
which implies that there exist two constants $s_{\ast }^{+}(u)$ and $s_{\ast
}^{-}(u)$ satisfying $\widehat{s}_{u}<s_{\ast }^{-}(u)<s_{u}^{\ast }<s_{\ast
}^{+}(u)$ such that
\begin{equation*}
\widehat{h}(s_{\ast }^{\pm }(u))=\frac{\gamma (Np-N-\alpha )}{2p}B(u).
\end{equation*}%
Namely, $u_{s_{\ast }^{\pm }(u)}\in \Lambda (c).$ By a calculation on the
second order derivatives, we find that
\begin{equation*}
g_{u_{s_{\ast }^{-}(u)}}^{\prime \prime }(1)=(s_{\ast }^{-}(u))^{Np-N-\alpha
+1}(\widehat{h})^{\prime }(s_{\ast }^{-}(u))<0\text{ and }g_{u_{s_{\ast
}^{+}\left( u\right) }}^{\prime \prime }\left( 1\right) =\left( s_{\ast
}^{+}\left( u\right) \right) ^{Np-N-\alpha +1}(\widehat{h})^{\prime }\left(
s_{\ast }^{+}\left( u\right) \right) >0.
\end{equation*}%
These imply that $u_{s_{\ast }^{\pm }\left( u\right) }\in \Lambda ^{\pm
}\left( c\right) $. This completes the proof.
\end{proof}

\begin{lemma}
\label{L4-1} Let $N\geq 2,\overline{p}<p\leq 2_{\alpha }^{\ast },\overline{q}%
<q<2^{\ast }$ with $2Np-qN-2\alpha >0$. Then the functional $E_{\gamma ,\mu
} $ is bounded from below by a positive constant and coercive on $\Lambda
^{-}(c)$ for all $c>0.$
\end{lemma}

\begin{proof}
Let $u\in \Lambda ^{-}(c)$. Then we have
\begin{equation}
A(u)-\frac{\gamma (Np-N-\alpha )}{2p}B(u)-\frac{\mu N(q-2)}{2q}C(u)=0
\label{e4-15}
\end{equation}%
and
\begin{equation}
(Np-N-\alpha -2)A(u)>\frac{\mu N(q-2)(2Np-2\alpha -qN)}{4q}C(u).
\label{e4-17}
\end{equation}%
It follows from (\ref{e2-1}) and (\ref{e4-15}) that%
\begin{eqnarray*}
A(u)=\frac{\gamma (Np-N-\alpha )}{2p}B(u)+\frac{\mu N(q-2)}{2q}C(u) \leq%
\frac{\mu N(q-2)}{2q}\overline{S}c^{\frac{2N-q(N-2)}{4}}A(u)^{\frac{N(q-2)}{4%
}},
\end{eqnarray*}%
which implies that $A(u)\geq D_{2}$ for some $D_{2}>0$. Using this, together
with (\ref{e4-15}) and (\ref{e4-17}), leads to
\begin{eqnarray*}
E_{\gamma ,\mu }(u) &=&\frac{Np-N-\alpha -2}{2(Np-N-\alpha )}A(u)-\frac{\mu
(2Np-2\alpha -qN)}{2q(Np-N-\alpha )}C(u) \\
&>&\frac{(Np-N-\alpha -2)(N(q-2)-4)}{2N(Np-N-\alpha )(q-2)}D_{2}>0.
\end{eqnarray*}%
This shows that $E_{\gamma ,\mu }$ is bounded from below by a positive
constant and coercive on $\Lambda ^{-}(c)$. This completes the proof.
\end{proof}

\begin{remark}
\label{R2} One point worth emphasizing is that we are not sure whether $%
E_{\gamma ,\mu }$ is bounded from below on $\Lambda ^{+}(c),$ which seems to
be an interesting issue.
\end{remark}

We now define%
\begin{equation*}
V_{r}:=V\cap H_{r}^{1}(\mathbb{R}^{N}),\text{ }\Lambda _{r}(c):=\Lambda
(c)\cap H_{r}^{1}(\mathbb{R}^{N})\text{ and }\Lambda _{r}^{-}(c):=\Lambda
^{-}(c)\cap H_{r}^{1}(\mathbb{R}^{N}).
\end{equation*}%
It follows from Lemma \ref{L4-1} that
\begin{equation*}
l_{r}^{-}(c):=\inf_{u\in \Lambda _{r}^{-}(c)}E_{\gamma ,\mu }(u)\geq
\inf_{u\in \Lambda ^{-}(c)}E_{\gamma ,\mu }(u)>0.
\end{equation*}%
Next, we shall establish the existence of a Palais--Smale sequence $%
\{u_{n}\}\subset \Lambda _{r}^{-}(c)$ by Lemma \ref{L2-6} in Section 2.3.
Define the functional $F^{-}:V_{r}\mapsto \mathbb{R}$ by $%
F^{-}(u):=E_{\gamma ,\mu }(u_{s_{\ast }^{-}(u)})$, where $s_{\ast }^{-}(u)$
is given in Lemma \ref{L4-3}.

As Lemmas \ref{L3-6}-\ref{L3-13}, we have the following results without
proof.

\begin{lemma}
\label{L4-10} If $\Lambda ^{0}( c) =\emptyset $, then $\Lambda ( c) =V\cap
\mathcal{M}( c) $ is smooth submanifold of codimension $2$ of $H^{1}(
\mathbb{R}^{N}) $ and a submanifold of codimension $1$ in $S(c) $.
\end{lemma}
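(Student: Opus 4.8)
The plan is to reuse the proof scheme of Lemma \ref{L2-2}, observing that it is entirely local: at a point $u\in\mathcal{M}(c)$ the submanifold property follows as soon as the differential of the constraint map is surjective there, and this surjectivity is exactly the failure of $u$ to lie in $\mathcal{M}^{0}(c)$. Since $\Lambda^{0}(c)=V\cap\mathcal{M}^{0}(c)$ by definition, the hypothesis $\Lambda^{0}(c)=\emptyset$ provides the required nondegeneracy at every point of $\Lambda(c)=V\cap\mathcal{M}(c)$. The only extra bookkeeping is to dispose of the restriction to $V$: I would first note that, by continuity of $A$ and $C$ on $H^{1}(\mathbb{R}^{N})$ (valid for $\frac{N+\alpha}{N}<p<2_{\alpha}^{\ast}$ and $2<q<2^{\ast}$), the set $\widetilde{V}:=\{u\in H^{1}(\mathbb{R}^{N})\ |\ \mu C(u)>\Gamma A(u)^{N(q-2)/4}\}$ is open in $H^{1}(\mathbb{R}^{N})$ and $\widetilde{V}\cap S(c)=V$. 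Hence $\Lambda(c)=\{u\in\widetilde{V}\ |\ D(u)-c=0,\ Q(u)=0\}=G^{-1}(0,0)$, where $G:\widetilde{V}\to\mathbb{R}^{2}$, $G(u):=(D(u)-c,\,Q(u))$, is of class $C^{1}$.

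The core step is to show that $(0,0)$ is a regular value of $G$, i.e. that $dD(u)$ and $dQ(u)$ are linearly independent in $(H^{1}(\mathbb{R}^{N}))^{\ast}$ for every $u\in\Lambda(c)$. Since $dD(u)[u]=2c>0$ we have $dD(u)\neq0$, so a linear dependence would force $Q'(u)=2\nu u$ in $H^{-1}(\mathbb{R}^{N})$ for some $\nu\in\mathbb{R}$; that is, $u$ would be a weak solution of an elliptic equation of the type treated in Lemma \ref{L2-1}. Adding the Pohozaev identity of that equation to the relation $Q(u)=0$ gives, after a short computation, $g_{u}''(1)=0$, i.e. $u\in\mathcal{M}^{0}(c)$; since $u\in V$ this says $u\in\Lambda^{0}(c)=\emptyset$, a contradiction. (Heuristically, and more transparently: evaluating $Q'(u)=2\nu u$ along the dilation $t\mapsto u_{t}$ one gets $\frac{d}{dt}\big|_{t=1}Q(u_{t})=g_{u}'(1)+g_{u}''(1)=g_{u}''(1)$ while $\frac{d}{dt}\big|_{t=1}D(u_{t})=0$ because $\|u_{t}\|_{2}^{2}=\|u\|_{2}^{2}$, whence $g_{u}''(1)=2\nu\cdot 0=0$, using $g_{u}'(1)=0$ as $u\in\mathcal{M}(c)$ and $g_{u}''(1)\neq0$ as $u\notin\mathcal{M}^{0}(c)$.) Therefore $dG(u)$ is onto $\mathbb{R}^{2}$.

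With $(0,0)$ a regular value, the regular value theorem for $C^{1}$ maps between Banach manifolds yields that $\Lambda(c)=G^{-1}(0,0)$ is a $C^{1}$ submanifold of the open set $\widetilde{V}\subset H^{1}(\mathbb{R}^{N})$, hence of $H^{1}(\mathbb{R}^{N})$, of codimension $2$. For the codimension-one assertion I would work inside the Hilbert manifold $S(c)$, itself a codimension-one $C^{1}$ submanifold of $H^{1}(\mathbb{R}^{N})$ with $T_{u}S(c)=\ker dD(u)$: the dilation direction lies in $\ker dD(u)$ and pairs nontrivially with $dQ(u)$ (by the computation above), so $dQ(u)|_{T_{u}S(c)}\neq0$, $0$ is a regular value of $Q|_{V}$, and $\Lambda(c)=\{u\in V\ |\ Q(u)=0\}$ is a codimension-one $C^{1}$ submanifold of $S(c)$.

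The argument is essentially routine; the one point that genuinely needs attention is that one cannot simply invoke Lemma \ref{L2-2}, because $\mathcal{M}^{0}(c)$ may be nonempty outside $V$ — it is the passage to $G$ on the open set $\widetilde{V}$ together with the purely local nature of the regular value theorem that makes the weaker hypothesis $\Lambda^{0}(c)=\emptyset$ sufficient. A secondary technicality is that $t\mapsto u_{t}$ need not be a $C^{1}$ curve into $H^{1}(\mathbb{R}^{N})$ (the generator $\frac{N}{2}u+x\cdot\nabla u$ may fail to be in $H^{1}$); this is harmless, since $t\mapsto D(u_{t})$ and $t\mapsto Q(u_{t})$ are explicit finite sums of powers of $t$ with coefficients $A(u),B(u),C(u)$, so their derivatives at $t=1$ are well defined, and in any case the elliptic-equation-plus-Pohozaev route dispenses with the dilation entirely.
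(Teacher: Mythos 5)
Your argument is correct and is exactly the Soave-type regular-value argument that the paper invokes (it states Lemma \ref{L4-10} without proof, referring back to Lemma \ref{L2-2} and Soave's scheme): linear dependence of $dD(u)$ and $dQ(u)$ forces $Q'(u)=2\nu u$, whose Pohozaev identity combined with $Q(u)=0$ yields precisely the third expression for $g_u''(1)$ in (\ref{e2-8}), hence $g_u''(1)=0$ and $u\in\Lambda^0(c)$, a contradiction. Your extra care in replacing $V$ by the open set $\widetilde V\subset H^1(\mathbb{R}^N)$ and in avoiding the possibly non-$H^1$ dilation generator is sound and does not change the substance of the argument.
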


\begin{lemma}
\label{L4-14} The map $u\in V_{r}\mapsto s_{\ast }^{-}(u)\in \mathbb{R}$ is
of class $C^{1}$.
\end{lemma}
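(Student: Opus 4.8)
The plan is to mimic the proof of Lemma \ref{L3-6}, i.e. to invoke the implicit function theorem. First, note that $V$ is an \emph{open} subset of the $C^{1}$-Hilbert manifold $S(c)$, since its defining inequality $\mu C(u)>\Gamma A(u)^{N(q-2)/4}$ involves only the continuous functionals $A$ and $C$; hence it makes sense to speak of a $C^{1}$ map on $V$. Consider then $F:\mathbb{R}^{+}\times V\rightarrow\mathbb{R}$ defined by $F(t,w):=g_{w}^{\prime}(t)$, where $g_{w}$ is the fibering map of (\ref{e2-10}). Since $E_{\gamma,\mu}\in C^{1}(H^{1}(\mathbb{R}^{N}),\mathbb{R})$ and $(t,w)\mapsto w_{t}$ is smooth in $t$ and $C^{1}$ in $w$, the map $F$ is of class $C^{1}$.

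Next I would verify the hypotheses of the implicit function theorem at the point $(t_{\ast}^{-}(u),u)$ for a fixed $u\in V$. By Lemma \ref{L4-3}, $u_{t_{\ast}^{-}(u)}\in\Lambda(c)$, so $F(t_{\ast}^{-}(u),u)=g_{u}^{\prime}(t_{\ast}^{-}(u))=0$. For the transversality condition, I would use the elementary scaling relation $(u_{s})_{t}=u_{st}$, which gives $g_{u_{s}}(t)=g_{u}(st)$ and hence, differentiating twice at $t=1$, $g_{u_{s}}^{\prime\prime}(1)=s^{2}g_{u}^{\prime\prime}(s)$. Taking $s=t_{\ast}^{-}(u)$ and recalling from Lemma \ref{L4-3} that $u_{t_{\ast}^{-}(u)}\in\Lambda^{-}(c)$, i.e. $g_{u_{t_{\ast}^{-}(u)}}^{\prime\prime}(1)<0$, we obtain
\[
\partial_{t}F(t_{\ast}^{-}(u),u)=g_{u}^{\prime\prime}(t_{\ast}^{-}(u))=\bigl(t_{\ast}^{-}(u)\bigr)^{-2}g_{u_{t_{\ast}^{-}(u)}}^{\prime\prime}(1)<0,
\]
so in particular $\partial_{t}F(t_{\ast}^{-}(u),u)\neq 0$. (The same conclusion is immediate from the identity $g_{u_{t_{\ast}^{-}(u)}}^{\prime\prime}(1)=(t_{\ast}^{-}(u))^{Np-N-\alpha+1}(\widehat{h})^{\prime}(t_{\ast}^{-}(u))<0$ already recorded in the proof of Lemma \ref{L4-3}.)

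The implicit function theorem then provides a neighbourhood of $u$ in $V$ and a $C^{1}$ map $w\mapsto\widetilde{t}(w)$ on it with $\widetilde{t}(u)=t_{\ast}^{-}(u)$ and $g_{w}^{\prime}(\widetilde{t}(w))=0$. The final step is to identify $\widetilde{t}(w)$ with $t_{\ast}^{-}(w)$, and this is where the real care is needed — it is the main obstacle. By continuity, $w_{\widetilde{t}(w)}$ remains close to $u_{t_{\ast}^{-}(u)}\in\Lambda^{-}(c)$; since $\Lambda^{0}(c)=\emptyset$ by Lemma \ref{L4-2}, one cannot pass with continuity from $\Lambda^{-}(c)$ to $\Lambda^{+}(c)$, so $w_{\widetilde{t}(w)}\in\Lambda^{-}(c)$, and by the uniqueness built into the proof of Lemma \ref{L4-3} ($t_{\ast}^{-}(w)$ being the only $t<t_{w}^{\ast}$ for which $w_{t}\in\Lambda(c)$) we conclude $\widetilde{t}(w)=t_{\ast}^{-}(w)$ near $u$. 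Hence $t_{\ast}^{-}$ is of class $C^{1}$ on $V$. Concretely, the delicate point is to ensure that the branch of zeros of $g_{w}^{\prime}$ produced by the theorem stays inside the open set $V$ and does not slide onto the component $\Lambda^{+}(c)$ or onto the trivial solution $t\to 0^{+}$, which is exactly what the emptiness of $\Lambda^{0}(c)$ and the ordering $\widehat{t}_{w}<t_{\ast}^{-}(w)<t_{w}^{\ast}<t_{\ast}^{+}(w)$ of Lemma \ref{L4-3} guarantee.
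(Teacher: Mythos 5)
Your proof is correct and is essentially the argument the paper intends: the paper states Lemma \ref{L4-14} without proof, deferring to the implicit-function-theorem argument of Lemma \ref{L3-6}, which is exactly what you carry out (and you get the sign right — $\partial_{t}F(t_{\ast}^{-}(u),u)=(t_{\ast}^{-}(u))^{-2}g_{u_{t_{\ast}^{-}(u)}}^{\prime\prime}(1)<0$, whereas the paper's Lemma \ref{L3-6} writes $>0$, a slip that is immaterial since only nonvanishing is needed). Your extra care in identifying the implicit branch with $t_{\ast}^{-}$ is sound; note only that the branch stays in $V$ automatically because $V$ is dilation-invariant, and then $\Lambda^{0}(c)=\emptyset$ together with the uniqueness of the critical point of $\widehat{h}$ below $t_{w}^{\ast}$ from Lemma \ref{L4-3} finishes the identification as you say.
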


\begin{lemma}
\label{L4-6} The map $T_{u}V_{r}\rightarrow T_{u_{s_{\ast }^{-}(u)}}V_{r}$
defined by $\phi\rightarrow \phi _{s_{\ast }^{-}(u)}$ is isomorphism.
\end{lemma}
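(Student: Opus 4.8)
The plan is to reduce the claim to Lemma \ref{L3-12} by exploiting that $V$ is an \emph{open} subset of $S(c)$. Indeed, by (\ref{e4-34}) we have $V=\{w\in S(c)\mid \mu C(w)>\Gamma A(w)^{N(q-2)/4}\}$, which is relatively open in the $C^{1}$-manifold $S(c)$ because $A$ and $C$ are continuous; hence for every $w\in V$ the tangent space is $T_{w}V=T_{w}S(c)=\{\phi\in H^{1}(\mathbb{R}^{N})\mid \int_{\mathbb{R}^{N}}w\phi\,dx=0\}$. Writing $t:=t_{\ast}^{-}(u)$, which by Lemma \ref{L4-3} is a fixed number with $0<\widehat{t}_{u}<t<t_{u}^{\ast}<\infty$, it therefore suffices to prove that $\psi\mapsto\psi_{t}$ is a topological linear isomorphism from $T_{u}S(c)$ onto $T_{u_{t}}S(c)$.

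First I would verify linearity and well-definedness: linearity is clear since $(\psi+s\phi)_{t}=\psi_{t}+s\phi_{t}$, and the substitution $y=tx$ gives $\int_{\mathbb{R}^{N}}u_{t}\psi_{t}\,dx=\int_{\mathbb{R}^{N}}t^{N/2}u(tx)\,t^{N/2}\psi(tx)\,dx=\int_{\mathbb{R}^{N}}u(y)\psi(y)\,dy=0$, so $\psi_{t}\in T_{u_{t}}S(c)$. Boundedness is immediate from $\Vert\psi_{t}\Vert_{H^{1}}^{2}=t^{2}\Vert\nabla\psi\Vert_{2}^{2}+\Vert\psi\Vert_{2}^{2}\le\max\{1,t^{2}\}\,\Vert\psi\Vert_{H^{1}}^{2}$, using that $t$ is a fixed positive finite constant. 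For the inverse, the dilation $\phi\mapsto\phi_{1/t}$ is linear and bounded by the same estimate, it maps $T_{u_{t}}S(c)$ into $T_{u}S(c)$ by the analogous change of variables together with $(u_{t})_{1/t}=u$, and it satisfies $(\psi_{t})_{1/t}=\psi$ and $(\phi_{1/t})_{t}=\phi$; hence it is a two-sided inverse, and $\psi\mapsto\psi_{t}$ is the desired topological isomorphism.

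There is essentially no substantive obstacle: the argument is the same change-of-variables computation already used for Lemma \ref{L3-12} (cf. \cite[Lemma 3.6]{BS3}). The only two points worth a line of justification are that $V$ is open in $S(c)$ — so that $T_{u}V=T_{u}S(c)$ and $T_{u_{t}}V=T_{u_{t}}S(c)$ — and that $t_{\ast}^{-}(u)\in(0,\infty)$, which are guaranteed by the definition of $V$ and by Lemma \ref{L4-3}, respectively.
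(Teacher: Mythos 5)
Your proof is correct and follows essentially the same route as the paper, which states this lemma without proof by referring back to Lemma \ref{L3-12} (itself resting on the change-of-variables computation and \cite[Lemma 3.6]{BS3}). Your added observation that $V$ is open in $S(c)$, so that $T_{u}V=T_{u}S(c)$ and $T_{u_{t_{\ast}^{-}(u)}}V=T_{u_{t_{\ast}^{-}(u)}}S(c)$, is exactly the point needed to make the reduction legitimate.
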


\begin{lemma}
\label{L4-7} We have that $(F^{-})^{\prime }(u)[\phi ]=E_{\gamma ,\mu
}^{\prime }(u_{s_{\ast }^{-}(u)})[\phi _{s_{\ast }^{-}(u)}]$ for any $u\in
V_{r}$ and $\phi \in T_{u}V_{r}$.
\end{lemma}

Since $V_{r}$ has a boundary, in order to guarantee that our deformation
arguments happen inside $V_{r},$ we need the following lemma.

\begin{lemma}
\label{L4-13} Let $0<|\gamma |<\gamma _{\ast }$. If $\{v_{n}\}\subset V_{r}$
is a sequence with $v_{n}\rightarrow \bar{v}\in \partial V_{r}$ strongly in $%
H^{1}(\mathbb{R}^{N})$, then there exist a sequence $\{s_{\ast
}^{-}(v_{n})\} $ with $s_{\ast }^{-}(v_{n})\rightarrow s_{\infty }>0$
satisfying $g_{\bar{v}_{s_{\infty }}}^{\prime \prime }(1)\leq 0$ and a
constant $\overline{M}=\overline{M}(\gamma ,c)>0$ satisfying $\overline{M}%
\rightarrow \infty $ as $|\gamma |\rightarrow 0$ such that $E_{\gamma ,\mu }(%
\bar{v}_{s_{\infty }})>\overline{M}.$
\end{lemma}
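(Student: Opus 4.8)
The plan is to pass to the limit along $\{v_n\}$, using that the convergence is strong in $H^1(\mathbb{R}^N)$ so that $A,B,C$ — and hence all the explicit quantities attached to $v_n$ in Lemma~\ref{L4-3} — are continuous along the sequence. First I would note that $\|v_n\|_2^2=c>0$ forces $\|\bar v\|_2^2=c$, so $\bar v\neq 0$ and $A(\bar v),B(\bar v),C(\bar v)>0$, with $A(v_n)\to A(\bar v)$, $B(v_n)\to B(\bar v)$, $C(v_n)\to C(\bar v)$ (continuity of $B$ on $H^1$ follows from Hardy--Littlewood--Sobolev). By Lemma~\ref{L4-3}, $\widehat t_{v_n}<t_\ast^-(v_n)<t_{v_n}^\ast$, and since $\widehat t_{v_n},t_{v_n}^\ast$ are explicit continuous functions of $A(v_n),C(v_n)$ they converge to positive limits $\widehat t_{\bar v},t_{\bar v}^\ast$. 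Hence $\{t_\ast^-(v_n)\}$ is eventually contained in a fixed compact subinterval of $(0,\infty)$, and, up to a subsequence, $t_\ast^-(v_n)\to t_\infty\in[\widehat t_{\bar v},t_{\bar v}^\ast]\subset(0,\infty)$.

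Next I would identify $t_\infty$. Since $(v_n)_{t_\ast^-(v_n)}\in\Lambda^-(c)\subset\mathcal{M}(c)$, the number $t_\ast^-(v_n)$ solves $\widehat h(t)=\frac{\gamma(Np-N-\alpha)}{2p}B(v_n)$, with $\widehat h$ the auxiliary function of the proof of Lemma~\ref{L4-3} (coefficients $A(v_n),C(v_n)$). Letting $n\to\infty$ and using $t_\ast^-(v_n)\to t_\infty>0$, I get that $t_\infty$ solves the same equation with coefficients $A(\bar v),C(\bar v)$ and right-hand side $\frac{\gamma(Np-N-\alpha)}{2p}B(\bar v)$, i.e. $\bar v_{t_\infty}\in\mathcal{M}(c)$. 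Because that $\widehat h$ is strictly decreasing on $(0,t_{\bar v}^\ast)$ with $\widehat h'(t_{\bar v}^\ast)=0$, and $t_\infty\le t_{\bar v}^\ast$, we have $\widehat h'(t_\infty)\le 0$; combined with $g_{\bar v_t}''(1)=t^{Np-N-\alpha+1}\widehat h'(t)$ (from the proof of Lemma~\ref{L4-3}), this yields $g_{\bar v_{t_\infty}}''(1)\le 0$, which is the first assertion.

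The heart of the matter is the lower bound on $E_{\gamma,\mu}(\bar v_{t_\infty})$. Put $w:=\bar v_{t_\infty}$, so $A(w)=t_\infty^2A(\bar v)$. A direct computation of $\widehat t_u/t_u^\ast$ gives $\widehat t_{\bar v}=\kappa_1 t_{\bar v}^\ast$ with the explicit constant $\kappa_1:=\big(\frac{2Np-Nq-2\alpha}{2(Np-N-\alpha-2)}\big)^{2/(N(q-2)-4)}\in(0,1)$, hence $\kappa_1^2(t_{\bar v}^\ast)^2\le t_\infty^2\le(t_{\bar v}^\ast)^2$. Since $\bar v\in\partial V$ and $V=\{u\in S(c)\ |\ (t_u^\ast)^2A(u)<K\}$ by (\ref{e4-34}), one has $(t_{\bar v}^\ast)^2A(\bar v)=K$, so $\kappa_1^2K\le A(w)\le K$. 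On $\mathcal{M}(c)$, eliminating $C$ via $Q(w)=0$ and using $\gamma<0$ gives $E_{\gamma,\mu}(w)=\frac{N(q-2)-4}{2N(q-2)}A(w)-\frac{|\gamma|(2Np-Nq-2\alpha)}{2Np(q-2)}B(w)$, and then (\ref{e2-3}) yields $E_{\gamma,\mu}(w)\ge\phi(A(w))$, where $\phi(s):=\frac{N(q-2)-4}{2N(q-2)}s-\frac{|\gamma|(2Np-Nq-2\alpha)c^{(N+\alpha-p(N-2))/2}}{2N(q-2)\|Q_p\|_2^{2p-2}}s^{(Np-N-\alpha)/2}$. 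This $\phi$ is strictly increasing on $[0,s_\ast]$, where $s_\ast=\big(\frac{2\|Q_p\|_2^{2p-2}(N(q-2)-4)}{|\gamma|(2Np-Nq-2\alpha)(Np-N-\alpha)c^{(N+\alpha-p(N-2))/2}}\big)^{2/(Np-N-\alpha-2)}$ is its unique critical point, and comparing the closed forms of $K$ and $s_\ast$ shows $K=\big(\frac{4}{N(q-2)}\big)^{4/(N(q-2)-4)}s_\ast<s_\ast$ since $N(q-2)>4$. Therefore $A(w)\in(\kappa_1^2K,K]\subset(0,s_\ast)$, so $E_{\gamma,\mu}(w)\ge\phi(\kappa_1^2K)$. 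Writing $\kappa_1^2K=\lambda_0s_\ast$ with $\lambda_0:=\kappa_1^2\big(\frac{4}{N(q-2)}\big)^{4/(N(q-2)-4)}\in(0,1)$ a fixed constant and using $\phi'(s_\ast)=0$, one gets $\phi(\kappa_1^2K)=\frac{(N(q-2)-4)s_\ast}{N(q-2)}\big(\frac{\lambda_0}{2}-\frac{\lambda_0^{(Np-N-\alpha)/2}}{Np-N-\alpha}\big)>0$, the bracket being positive because $\lambda_0<1$ and $(Np-N-\alpha)/2>1$ force $\lambda_0^{(Np-N-\alpha)/2-1}<1<(Np-N-\alpha)/2$. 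Setting $\overline M=\overline M(\gamma,c):=\frac12\phi(\kappa_1^2K)>0$ gives $E_{\gamma,\mu}(\bar v_{t_\infty})>\overline M$; and since $s_\ast$ equals a positive constant times $|\gamma|^{-2/(Np-N-\alpha-2)}$ with $Np-N-\alpha-2>0$, we get $\overline M\to\infty$ as $|\gamma|\to 0$.

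The routine parts are the first two paragraphs (a continuity and limit argument). The main obstacle is the energy estimate: one has to observe that the scaling parameter $t_\infty$ is pinned, by fixed multiplicative constants, between $\widehat t_{\bar v}$ and $t_{\bar v}^\ast$, so that $A(w)$ is comparable to $K$; that $K$ is itself a fixed fraction of the maximizer $s_\ast$ of the natural coercivity function $\phi$; and that $\phi$ evaluated at such a point is of order $s_\ast\sim|\gamma|^{-2/(Np-N-\alpha-2)}$, which is precisely what makes $\overline M$ blow up as $|\gamma|\to 0$. A minor point is keeping the inequality strict, which is why I halve $\phi(\kappa_1^2K)$ in the definition of $\overline M$, since a priori $A(w)$ could equal the endpoint $\kappa_1^2K$.
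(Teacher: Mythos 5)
Your proposal is correct, and the first two paragraphs (strong convergence of $A,B,C$, boundedness and convergence of $t_\ast^-(v_n)$, passage to the limit in $Q=0$ and in $g''\le 0$) coincide with the paper's argument. The quantitative part, however, goes by a genuinely different route. The paper exploits the first description of $\partial V$, namely $\mu C(\bar v)=\Gamma A(\bar v)^{N(q-2)/4}$, which is invariant under the dilation $u\mapsto u_t$; combined with $Q(\bar v_{t_\infty})=0$ and $\gamma<0$ this gives directly $A(\bar v_{t_\infty})\ge \left(\tfrac{2q}{N\Gamma(q-2)}\right)^{4/(N(q-2)-4)}$, and then $g''_{\bar v_{t_\infty}}(1)\le 0$ is used a second time to eliminate $C$ and obtain $E_{\gamma,\mu}(\bar v_{t_\infty})>\tfrac{(Np-N-\alpha-2)(N(q-2)-4)}{2N(Np-N-\alpha)(q-2)}A(\bar v_{t_\infty})$. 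You instead use the equivalent description $(t_{\bar v}^{\ast})^2A(\bar v)=K$ together with the pinching $\widehat t_{\bar v}=\kappa_1 t_{\bar v}^{\ast}\le t_\infty\le t_{\bar v}^{\ast}$ to localize $A(\bar v_{t_\infty})$ in $[\kappa_1^2K,K]$, and then run the coercivity function $\phi$ (eliminating $C$ via $Q=0$ and bounding $B$ by the Hartree Gagliardo--Nirenberg inequality), observing that $K$ is a fixed fraction of the maximizer $s_\ast$ of $\phi$. Both yield $\overline M\sim |\gamma|^{-2/(Np-N-\alpha-2)}\to\infty$. The paper's route is a bit leaner: it only needs a one-sided bound on $A(\bar v_{t_\infty})$ and reuses $g''\le 0$, whereas yours needs the two-sided bound $A(\bar v_{t_\infty})\le K<s_\ast$ (essential, since $\phi$ decreases past $s_\ast$) but avoids invoking $g''\le 0$ in the energy estimate and makes the dependence on the boundary of $V$ more geometrically transparent. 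Your halving of $\phi(\kappa_1^2K)$ to keep the inequality strict is a sensible precaution, though one could also argue strictness directly as the paper does.
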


\begin{proof}
Let $\{v_{n}\}\subset V_{r}$ be such that $v_{n}\rightarrow \bar{v}\in
\partial V_{r}$ strongly in $H^{1}(\mathbb{R}^{N})$ as $n\rightarrow \infty $%
. Then it holds $s_{v_{n}}^{\ast }\rightarrow s_{\bar{v}}^{\ast }\neq 0$ and
$\widehat{s}_{v_{n}}\rightarrow \widehat{s}_{\bar{v}}\neq 0$ as$%
\,n\rightarrow \infty ,$ where $s_{u}^{\ast }$ and $\widehat{t}_{u}$ are
defined in (\ref{e4-30}) and (\ref{e4-6}), respectively. Since $v_{n}\in
V_{r}$ and $\bar{v}\in \partial V_{r}$, we have $(s_{v_{n}}^{\ast
})^{2}A(v_{n})<K$ and $(s_{\bar{v}}^{\ast })^{2}A(\bar{v})=K$. Moreover, it
follows from Lemma \ref{L4-3} that $\widehat{s}_{v_{n}}<s_{\ast
}^{-}(v_{n})\leq s_{v_{n}}^{\ast }\leq s_{\ast }^{+}(v_{n}),$ which implies
that the sequence $\{s_{\ast }^{-}(v_{n})\}$ is bounded. Then we can assume
that $\{s_{\ast }^{-}(v_{n})\}$, up to a subsequence, converges to $%
s_{\infty }\neq 0$. Since $v_{n}\rightarrow \bar{v}$ strongly in $H^{1}(%
\mathbb{R}^{N})$ and $Q((v_{n})_{s_{\ast }^{-}(v_{n})})=0$, we infer that $Q(%
\bar{v}_{s_{\infty }})=0$, namely,
\begin{equation}
A(\bar{v}_{s_{\infty }})-\frac{\gamma (Np-N-\alpha )}{2p}B(\bar{v}%
_{s_{\infty }})-\frac{\mu N(q-2)}{2q}C(\bar{v}_{s_{\infty }})=0.
\label{e4-18}
\end{equation}%
Then we have%
\begin{equation*}
A(\bar{v}_{s_{\infty }})\leq \frac{\mu N(q-2)}{2q}C(\bar{v}_{s_{\infty }})=%
\frac{N\Gamma (q-2)}{2q}A(\bar{v}_{s_{\infty }})^{\frac{N(q-2)}{4}},
\end{equation*}%
which implies that
\begin{equation}
A(\bar{v}_{s_{\infty }})\geq \left( \frac{2q}{N\Gamma (q-2)}\right) ^{\frac{4%
}{N(q-2)-4}},  \label{e4-40}
\end{equation}%
where $\Gamma $ is as in (\ref{e4-32}) and $\Gamma \rightarrow 0$ as $%
\left\vert \gamma \right\vert \rightarrow 0$. Since $g_{(v_{n})_{s_{\ast
}^{-}(v_{n})}}^{\prime \prime }(1)<0$, we have $g_{\bar{v}_{s_{\infty
}}}^{\prime \prime }(1)\leq 0$, namely,
\begin{equation}
(Np-N-\alpha -2)A(\bar{v}_{s_{\infty }})\geq \frac{\mu N(q-2)(2Np-Nq-2\alpha
)}{4q}C(\bar{v}_{s_{\infty }}).  \label{e4-19}
\end{equation}%
It follows from (\ref{e4-18})--(\ref{e4-19}) that%
\begin{eqnarray*}
E_{\gamma ,\mu }(\bar{v}_{s_{\infty }}) &>&\frac{(Np-N-\alpha -2)(N(q-2)-4)}{%
2N(Np-N-\alpha )(q-2)}A(\bar{v}_{s_{\infty }}) \\
&\geq &\frac{(Np-N-\alpha -2)(N(q-2)-4)}{2N(Np-N-\alpha )(q-2)}\left( \frac{%
2q}{N\Gamma (q-2)}\right) ^{\frac{4}{N(q-2)-4}}:=\overline{M}.
\end{eqnarray*}%
Clearly, $\overline{M}\rightarrow \infty $ as $|\gamma |\rightarrow 0$. This
completes the proof.
\end{proof}

\begin{lemma}
\label{L4-8} Let $\mathcal{F}$ be a homotopy stable family of compact
subsets of $V_{r}$ with closed boundary $B$ and let
\begin{equation*}
e_{\mathcal{F}}^{-}:=\inf_{H\in \mathcal{F}}\max_{u\in H}F^{-}(u).
\end{equation*}%
Suppose that $\Theta $ is contained in a connected component of $\Lambda
_{r}^{-}(c)$ and $\max \{\sup J^{-}(\Theta ),0\}<e_{\mathcal{F}}^{-}<\infty $%
. Then there exists a Palais--Smale sequence $\{u_{n}\}\subset \Lambda
_{r}^{-}(c)$ for $E_{\gamma ,\mu }$ restricted to $S(c)$ at level $e_{%
\mathcal{F}}^{-}$ for $|\gamma |$ sufficiently small.
\end{lemma}

\begin{proof}
Take $\{D_{n}\}\subset \mathcal{F}$ such that $\max_{u\in D_{n}}F^{-}(u)<e_{%
\mathcal{F}}^{-}+\frac{1}{n}$ and $\eta :[0,1]\times V_{r}\rightarrow V_{r},$
$\eta (t,u)=u_{1-t+ts_{\ast }^{-}(u)}.$ Since $s_{\ast }^{-}(u)=1$ for any $%
u\in \Lambda _{r}^{-}(c)$ and $\Theta \subset \Lambda _{r}^{-}(c)$, we
obtain $\eta (t,u)=u$ for $(t,u)\in (\{0\}\times V_{r})\cup ([0,1]\times
\Theta )$. Also note that $\eta $ is continuous. Then, using the definition
of $\mathcal{F}$,
\begin{equation*}
\mathbf{A}_{n}:=\eta (\{1\}\times D_{n})=\left\{ u_{s(u)}\text{ }|\text{ }%
u\in D_{n}\right\} \in \mathcal{F}.
\end{equation*}
Clearly, $\mathbf{A}_{n}\subset \Lambda _{r}^{-}(c)$ for all $n\in \mathbb{N}
$. Let $v\in \mathbf{A}_{n}$, i.e. $v=u_{s_{\ast }^{-}(u)}$ for some $u\in
D_{n}$ and $F^{-}(u)=F^{-}(v)$. Then we have $\max_{\mathbf{A}%
_{n}}F^{-}=\min_{D_{n}}F^{-},$ and so $\{\mathbf{A}_{n}\}\subset \Lambda
^{-}(c)$ is another minimizing sequence of $e_{\mathcal{F}}^{-}$. Note that
the minimax level $e_{\mathcal{F}}^{-}$ depends on the parameter $\gamma $
and is decreasing as $|\gamma |\searrow 0^{+}$. Then for $|\gamma |$ small
enough, by Lemmas \ref{L2-6} and \ref{L4-13}, we obtain a Palais--Smale
sequence $\{\widetilde{u}_{n}\}$ for $F^{-}$ on $V_{r}$ at level $e_{%
\mathcal{F}}^{-}$ such that dist$_{H^{1}(\mathbb{R}^{N})}(\widetilde{u}_{n},%
\mathbf{A}_{n})\rightarrow 0$ as $n\rightarrow \infty .$ Next, by using the
standard arguments as in Lemma \ref{L4-15}, we easily obtain the conclusion.
\end{proof}

\begin{lemma}
\label{L4-17} Let $N\geq 2,\overline{p}<p<2_{\alpha }^{\ast },\overline{q}%
<q<2^{\ast }$ with $2Np-qN-2\alpha >0$. Then there exists a Palais--Smale
sequence $\{u_{n}\}\subset \Lambda _{r}^{-}(c)$ for $E_{\gamma ,\mu }$
restricted to $S(c)$ at the level $l_{r}^{-}(c)>0.$
\end{lemma}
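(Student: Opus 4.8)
The plan is to repeat, \emph{mutatis mutandis}, the argument used for Lemma \ref{L3-8} (which rested on Lemma \ref{L3-22}), now with the open set $V$ playing the role of $S(c)$, the submanifold $\Lambda^{-}(c)$ playing the role of $\mathcal{M}^{-}(c)$, and Lemma \ref{L4-8} playing the role of Lemma \ref{L3-22}. Concretely, I would fix $0<|\gamma|<\gamma_{\ast}$ small enough that Lemma \ref{L4-13} applies, take $\overline{\mathcal{F}}$ to be the family of all singletons $\{u\}$ with $u\in V$, and take $\Theta=\emptyset$. Then $\overline{\mathcal{F}}$ is a homotopy stable family of compact subsets of $V$ with closed boundary $\Theta=\emptyset$, and $\Theta$ is vacuously contained in a connected component of $\Lambda^{-}(c)$, so the structural hypotheses of Lemma \ref{L4-8} are met.

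Next I would identify the associated minimax value. For every $u\in V$ one has, by Lemma \ref{L4-3}, $u_{t_{\ast}^{-}(u)}\in\Lambda^{-}(c)$ and $F^{-}(u)=E_{\gamma,\mu}(u_{t_{\ast}^{-}(u)})$; conversely, if $v\in\Lambda^{-}(c)$ then $v_{1}=v\in\Lambda^{-}(c)$, whence $t_{\ast}^{-}(v)=1$ by the uniqueness in Lemma \ref{L4-3} and $F^{-}(v)=E_{\gamma,\mu}(v)$. Consequently
\[
e_{\overline{\mathcal{F}}}^{-}=\inf_{H\in\overline{\mathcal{F}}}\max_{u\in H}F^{-}(u)=\inf_{u\in V}F^{-}(u)=\inf_{v\in\Lambda^{-}(c)}E_{\gamma,\mu}(v)=l^{-}(c).
\]
By Lemma \ref{L4-1}, $l^{-}(c)>0$, and $l^{-}(c)<\infty$ because $\omega_{0}\in V$ (as shown above whenever $0<|\gamma|<\gamma_{\ast}$) forces $(\omega_{0})_{t_{\ast}^{-}(\omega_{0})}\in\Lambda^{-}(c)$, hence $l^{-}(c)\le E_{\gamma,\mu}((\omega_{0})_{t_{\ast}^{-}(\omega_{0})})<\infty$. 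Since $\sup F^{-}(\Theta)=-\infty$, the requirement $\max\{\sup J^{-}(\Theta),0\}<e_{\overline{\mathcal{F}}}^{-}<\infty$ of Lemma \ref{L4-8} reduces to $0<l^{-}(c)<\infty$, which holds. Applying Lemma \ref{L4-8} then yields, for $|\gamma|$ sufficiently small, a Palais--Smale sequence $\{u_{n}\}\subset\Lambda^{-}(c)$ for $E_{\gamma,\mu}$ restricted to $V$ at the level $l^{-}(c)>0$. Finally, since $V$ is an open subset of $S(c)$, the tangent spaces $T_{u}V$ and $T_{u}S(c)$ coincide for every $u\in V$, so the constrained differentials of $E_{\gamma,\mu}$ on $V$ and on $S(c)$ agree at each point of $V$; as $\{u_{n}\}\subset\Lambda^{-}(c)\subset V$, this sequence is at once a Palais--Smale sequence for $E_{\gamma,\mu}$ restricted to $S(c)$ at level $l^{-}(c)>0$, proving the lemma.

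I expect the only delicate point to be the one already encapsulated in Lemma \ref{L4-8} (via Lemma \ref{L4-13}): keeping the Ghoussoub deformation inside the manifold-with-boundary $V$. This works because the barrier $\overline{M}$ of Lemma \ref{L4-13} satisfies $\overline{M}\to\infty$ as $|\gamma|\to0$, while the minimax level $l^{-}(c)$ stays bounded above uniformly in $|\gamma|$ small: one has the $\gamma$-independent lower bound $\tfrac{(Np-N-\alpha-2)(N(q-2)-4)}{2N(Np-N-\alpha)(q-2)}D_{3}>0$ from Lemma \ref{L4-1}, and a $\gamma$-uniform upper bound obtained by evaluating $E_{\gamma,\mu}$ at the fixed comparison function $(\omega_{0})_{t_{\ast}^{-}(\omega_{0})}$, whose energy converges as $|\gamma|\to0$ to the finite value associated with Eq. $(E_{\mu}^{\infty})$. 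Thus for $|\gamma|$ small enough $l^{-}(c)<\overline{M}$, the deformation does not reach $\partial V$, and the minimax scheme produces the desired Palais--Smale sequence; everything else is a verbatim copy of the proof of Lemma \ref{L3-8}.
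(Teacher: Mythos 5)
Your proposal is correct and takes essentially the same route as the paper: the paper's proof of this lemma is a two-line reduction to Lemma \ref{L4-8} using the homotopy stable family $\overline{\mathcal{F}}$ of all singletons in $V$ with $\Theta=\emptyset$ and the identification $e_{\overline{\mathcal{F}}}^{-}=\inf_{u\in V}F^{-}(u)=l^{-}(c)$. Your additional verifications (that $F^{-}(v)=E_{\gamma,\mu}(v)$ on $\Lambda^{-}(c)$, that $0<l^{-}(c)<\infty$ via Lemma \ref{L4-1} and the test function $\omega_{0}$, that tangent spaces of $V$ and $S(c)$ coincide, and that $l^{-}(c)<\overline{M}$ for $|\gamma|$ small) are all details the paper leaves implicit, and they are sound.
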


\begin{proof}
Let $\{u_{n}\}\subset \Lambda _{r}^{-}(c)$. By Lemma \ref{L4-8}, we choose
the set $\overline{\mathcal{F}}$ of all singletons belonging to $V_{r}$ and $%
\Theta =\emptyset $, which is clearly a homotopy stable family of compact
subsets of $V_{r}$ (without boundary). Note that $e_{\overline{\mathcal{F}}%
}^{-}=\inf_{H\in \overline{\mathcal{F}}}\max_{u\in H}F^{-}(u)=\inf_{u\in
V_{r}}F^{-}(u)=\inf_{u\in \Lambda_{r}^{-}(c)}E_{\gamma ,\mu
}(u)=l_{r}^{-}(c).$ Then the lemma follows directly from Lemma \ref{L4-8}.
This completes the proof.
\end{proof}

\textbf{Now we give the proof of Theorem \ref{t4}:} Under the condition $(a)$
of Theorem \ref{t4}, by Lemma \ref{L4-18}, there exists a Palais--Smale
sequence $\{u_{n}\}\subset \mathcal{M}_{r}(c)=\mathcal{M}_{r}^{-}(c)$ for $%
E_{\gamma ,\mu }$ restricted to $S(c)$ at level $m_{r}^{-}(c)>0$, which is
bounded in $H^{1}(\mathbb{R}^{N})$ via Lemma \ref{L4-9}. So, according to
Lemma \ref{L2-9}, Problem $(P_{c})$ admits a radial solution $u^{-}$
satisfying $E_{\gamma ,\mu }(u^{-})=m_{r}^{-}(c)>0$ for some $\lambda >0$.

Under the condition $(b)$ of Theorem \ref{t4}, let $\{u_{n}\}\subset \Lambda
_{r}^{-}(c)$ be a Palais--Smale sequence for $E_{\gamma ,\mu }$ restricted
to $S(c)$ at level $l_{r}^{-}(c)>0$ whose existence is ensured by Lemma \ref%
{L4-17}. Lemma \ref{L4-1} shows that $\{u_{n}\}$ is bounded in $H^{1}(%
\mathbb{R}^{N})$. Hence, it follows from Lemma \ref{L2-9} that $(P_{c})$ has
a radial solution $u^{-}$ satisfying $E_{\gamma ,\mu }(u^{-})=l_{r}^{-}(c)>0$
for some $\lambda >0$. This completes the proof.

Finally, we show that $u^{-}$ has an exponential decay at infinity. Let
\begin{equation*}
\widehat{\Phi }(x)=Ce^{-\delta (|x|-M)}
\end{equation*}%
where $C=\max \left\{ |u(x)|\text{ }|\text{\ }|x|=M\right\} $ and $0<\delta <%
\sqrt{\lambda }$. Then we have $\Delta \widehat{\Phi }=\left( \delta ^{2}-%
\frac{\delta }{|x|}\right) \widehat{\Phi }$. Define
\begin{equation*}
\widehat{\Psi }_{R}:=\left\{
\begin{array}{ll}
0, & \text{ }x\in B_{R} \\
u^{-}-\widehat{\Phi }, & \text{ }x\in \mathbb{R}^{N}\backslash B_{R}.%
\end{array}%
\right.
\end{equation*}%
Then we have
\begin{equation*}
\int_{\mathbb{R}^{N}}(|\nabla \widehat{\Psi }_{R}|^{2}+\lambda ^{-}|\widehat{%
\Psi }_{R}|^{2})dx\leq \int_{\mathbb{R}^{N}}(\delta ^{2}-\frac{\delta }{|x|}%
-\lambda ^{-})\widehat{\Phi }\widehat{\Psi }_{R}dx+\int_{\mathbb{R}%
^{N}}g(u^{-})\widehat{\Psi }_{R}dx+o_{R}(1).
\end{equation*}%
where $g(u^{-})=\gamma (I_{\alpha }\ast |u^{-}|^{p})|u^{-}|^{p-2}u^{-}+\mu
|u^{-}|^{q-2}u^{-}$. The fact that the solution $u^{-}$ decay uniformly to
zero as $|x|\rightarrow +\infty $, we can take $R>0$ such that
\begin{equation*}
g(u^{-})\leq (\lambda ^{-}-\delta ^{2})|u^{-}(x)|\quad \text{for }|x|\geq R.
\end{equation*}%
Thus, we have
\begin{eqnarray*}
\lambda ^{-}\int_{|x|\geq R}|\widehat{\Psi }_{R}|^{2}dx &\leq &\int_{|x|\geq
R}(|\nabla \widehat{\Psi }_{R}|^{2}+\lambda ^{-}|\widehat{\Psi }_{R}|^{2})dx
\\
&\leq &\int_{|x|\geq R}(\lambda ^{-}-\delta ^{2})(u^{-}-\widehat{\Phi })%
\widehat{\Psi }_{R}dx \\
&\leq &(\lambda ^{-}-\delta ^{2})\int_{|x|\geq R}|\widehat{\Psi }%
_{R}|^{2}dx+o_{R}(1).
\end{eqnarray*}%
Consequently, $\widehat{\Psi }_{R}\equiv 0$. This implies that $|x|u^{-}\in
L^{2}(\mathbb{R}^{N})$.

\subsection{The case $\protect\gamma>0, \protect\mu<0$}

\begin{lemma}
\label{L5-0} Let $N\geq 2,\overline{p}<p<2_{\alpha }^{\ast },\overline{q}%
<q\leq 2^{\ast }$ with $qN+2\alpha -2Np\leq 0.$ Then there is a unique $%
s^{-}(u)>0$ such that $u_{s^{-}(u)}\in \mathcal{M}(c)=\mathcal{M}^{-}(c)$
and
\begin{equation*}
E_{\gamma ,\mu }(u_{s^{-}(u)})=\sup_{s>0}E_{\gamma ,\mu }(u_{t})>0.
\end{equation*}
\end{lemma}

\begin{proof}
The proof is similar to Lemma \ref{L4-19}, so we omit it.
\end{proof}

\begin{lemma}
\label{L5-8} Let $N\geq 2,\overline{p}<p<2_{\alpha }^{\ast },\overline{q}%
<q\leq 2^{\ast }$ with $qN+2\alpha -2Np\leq 0.$ Then the functional $%
E_{\gamma ,\mu }$ is bounded from below by a positive constant and coercive
on $\mathcal{M}(c)=\mathcal{M}^{-}(c)$ for all $c>0$. Furthermore, it holds%
\begin{equation*}
m^{-}(c):=\inf_{u\in \mathcal{M}^{-}(c)}E_{\gamma ,\mu }(u)=\inf_{u\in
\mathcal{M}_{r}^{-}(c)}E_{\gamma ,\mu }(u).
\end{equation*}
\end{lemma}

\begin{proof}
Let $u\in \mathcal{M}(c)=\mathcal{M}^{-}(c)$. Using (\ref{e2-3}) and (\ref%
{e2-19}) gives%
\begin{equation*}
A(u)=\frac{\gamma N}{N+\alpha +2}B(u)-\frac{|\mu |N(q-2)}{2q}C(u)\leq \frac{%
\gamma \overline{B}N c^{\frac{N+\alpha -(N-2)p}{2}}}{(N+\alpha +2)}A(u)^{%
\frac{Np-N-\alpha }{2}},
\end{equation*}%
which implies that there exists a constant $D_{3}>0$ depending on $\gamma $
and $c,$ such that $A(u)\geq D_{3}$, since $p>\overline{p}.$ Then by (\ref%
{e2-19}), we have
\begin{eqnarray*}
E_{\gamma ,\mu }(u) &=&\frac{Np-N-\alpha -2}{2(Np-N-\alpha )}A(u)+\frac{|\mu
|(2Np-2\alpha -qN)}{2q(Np-N-\alpha )}C(u) \\
&>&\frac{Np-N-\alpha -2}{2(Np-N-\alpha )}D_{3}>0,
\end{eqnarray*}%
which shows that $E_{\gamma ,\mu }$ is bounded from below by a positive
constant and coercive on $\mathcal{M}(c)$.

Next, we claim that $\inf_{u\in \mathcal{M}^{-}(c)}E_{\gamma ,\mu
}(u)=\inf_{u\in \mathcal{M}_{r}^{-}(c)}E_{\gamma ,\mu }(u).$ Since $\mathcal{%
M}_{r}^{-}(c)\subset \mathcal{M}^{-}(c),$ we have $\inf_{u\in \mathcal{M}%
^{-}(c)}E_{\gamma ,\mu }(u)\leq \inf_{u\in \mathcal{M}_{r}^{-}(c)}E_{\gamma
,\mu }(u).$ Thus, it is enough to prove that $\inf_{u\in \mathcal{M}%
^{-}(c)}E_{\gamma ,\mu }(u)\geq \inf_{u\in \mathcal{M}_{r}^{-}(c)}E_{\gamma
,\mu }(u).$ By Lemma \ref{L5-0}, we have%
\begin{equation}
\inf_{u\in \mathcal{M}^{-}(c)}E_{\gamma ,\mu }(u)=\inf_{u\in
S(c)}\sup_{0<t\leq s^{-}(u)}E_{\gamma ,\mu }(u_{s}).  \label{e5-8}
\end{equation}

Let $u\in S(c)$ and $|u|^{\ast }\in S_{r}(c)$ be Schwarz rearrangement of $%
|u|.$ Clearly, $A(|u|^{\ast })\leq A(u)$ and $C(|u|^{\ast })=C(u)$ (see \cite%
{BH}). Moreover, by the Riesz's rearrangement inequality one has $%
B(|u|^{\ast })\geq B(u).$ Then for all $t>0$ we have%
\begin{eqnarray}
E_{\gamma ,\mu }((|u|^{\ast })_{t}) &=&\frac{s^{2}}{2}A(|u|^{\ast })-\frac{%
\gamma s^{Np-N-\alpha }}{2p}B(|u|^{\ast })-\frac{\mu s^{N(q-2)/2}}{q}%
C(|u|^{\ast })  \notag \\
&\leq &\frac{s^{2}}{2}A(u)-\frac{\gamma s^{Np-N-\alpha }}{2p}B(u)-\frac{\mu
s^{N(q-2)/2}}{q}C(u)=E_{\gamma ,\mu }(u_{t}).  \label{e5-4}
\end{eqnarray}%
Note that $g_{|u|^{\ast }}^{\prime }(0)=g_{u}^{\prime }(0)=0$ and $%
g_{|u|^{\ast }}^{\prime \prime }(s)\leq g_{u}^{\prime \prime }(s)$ for $s>0.$
This implies that $0<s^{-}(|u|^{\ast })\leq s^{-}(u).$ Hence, it follows
from (\ref{e5-4}) that
\begin{equation*}
\sup_{0<s\leq s^{-}(|u|^{\ast })}E_{\gamma ,\mu }((|u|^{\ast })_{s})\leq
\sup_{0<s\leq s^{-}(u)}E_{\gamma ,\mu }(u_{s}).
\end{equation*}%
Using this, together with (\ref{e5-8}), leads to $\inf_{u\in \mathcal{M}%
^{-}(c)}E_{\gamma ,\mu }(u)\geq \inf_{u\in \mathcal{M}_{r}^{-}(c)}E_{\gamma
,\mu }(u)$. This completes the proof.
\end{proof}

Next, similar to the arguments in Section 4.1, by Lemma \ref{L5-0} we apply
Lemma \ref{L2-6} to construct a Palais--Smale sequence $\{u_{n}\}\subset
\mathcal{M}_{r}^{-}(c)$ for $E_{\gamma ,\mu }$ restricted to $S(c)$ at level
$m^{-}(c)$. Here we only state the conclusion without proof.

\begin{lemma}
\label{L5-3} Let $\mathcal{F}$ be a homotopy stable family of compact
subsets of $S_{r}(c)$ with closed boundary $\Theta $ and let
\begin{equation*}
e_{\mathcal{F}}^{-}:=\inf_{H\in \mathcal{F}}\max_{u\in H}J^{-}(u),
\end{equation*}%
where $J^{-}(u)=E_{\gamma ,\mu }(u_{t^{-}}).$ Assume that $\Theta $ is
contained in a connected component of $\mathcal{M}_{r}^{-}(c)$ and $\max
\{\sup J^{-}(\Theta ),0\}<e_{\mathcal{F}}^{-}<\infty $. Then there exists a
Palais--Smale sequence $\{u_{n}\}\subset \mathcal{M}_{r}^{-}(c)$ for $%
E_{\gamma ,\mu }$ restricted to $S(c)$ at level $e_{\mathcal{F}}^{-}$.
\end{lemma}

In view of Lemma \ref{L5-8} we have%
\begin{equation*}
e_{\mathcal{F}}^{-}=\inf_{H\in \mathcal{F}}\max_{u\in H}J^{-}(u)=\inf_{u\in
S_{r}(c)}J^{-}(u)=\inf_{u\in \mathcal{M}_{r}^{-}(c)}E_{\gamma ,\mu
}(u)=\inf_{u\in \mathcal{M}^{-}(c)}E_{\gamma ,\mu }(u)=m^{-}(c).
\end{equation*}%
Then the following result follows from Lemma \ref{L5-3}.

\begin{lemma}
\label{L5-21} Assume that $N\geq 2,\gamma >0,\mu <0$ and that one of the two
following conditions holds:\newline
$(i)$ $\overline{p}<p<2_{\alpha }^{\ast },\overline{q}<q<2^{\ast }$ with $%
2Np-Nq-\alpha q>0$ and $|\mu |$ is sufficiently small;\newline
$(ii)$ $\overline{p}<p<2_{\alpha }^{\ast },\overline{q}<q\leq 2^{\ast }$
with $2Np-Nq-\alpha q\leq 0\leq 2Np-Nq-2\alpha .$\newline
Then there exists a Palais--Smale sequence $\{u_{n}\}\subset \mathcal{M}%
_{r}^{-}(c)$ for $E_{\gamma ,\mu }$ restricted to $S(c)$ at level $%
m^{-}(c)>0 $.
\end{lemma}

For the case of $\overline{p}<p<2_{\alpha }^{\ast }$ and $\overline{q}<q\leq
2^{\ast }$ with $2Np-Nq-2\alpha <0$, it is very difficult to verify that $%
\mathcal{M}^{0}(c)=\emptyset $. Following the idea in Section 4.1, we
introduce a novel natural constraint manifold.

Set%
\begin{equation*}
\Gamma _{\ast }:=\frac{p(N(q-2)-4)}{Nq+2\alpha -2Np}\left( \frac{|\mu
|N(Nq+2\alpha -2Np)(q-2)\overline{S}c^{\frac{2N-q(N-2)}{4}}}{4q(Np-N-\alpha
-2)}\right) ^{\frac{2(Np-N-\alpha -2)}{N(q-2)-4}}.
\end{equation*}%
Clearly, $\Gamma _{\ast }>0$ and $\Gamma _{\ast }\rightarrow 0$ as $%
c\rightarrow 0,$ since $\overline{p}<p<2_{\alpha }^{\ast }$ and $\overline{q}%
<q\leq 2^{\ast }$ with $2Np-Nq-2\alpha <0.$ We define the set%
\begin{eqnarray}
U:= &&\left\{ u\in S(c)\text{ }|\text{ }\gamma B(u)>\Gamma _{\ast }A(u)^{%
\frac{Np-N-\alpha }{2}}\right\}=\left\{ u\in S(c)\text{ }|\text{ }\left(
s_{u}^{\star }\right) ^{2}A(u)<K\right\} ,  \label{e5-3}
\end{eqnarray}%
where
\begin{equation*}
s_{u}^{\star }:=\left( \frac{2p(N(q-2)-4)A(u)}{\gamma (Np-N-\alpha
)(Nq+2\alpha -2Np)B(u)}\right) ^{\frac{1}{Np-N-\alpha -2}},  \label{e5-12}
\end{equation*}%
and
\begin{equation*}
K:=\left( \frac{2}{Np-N-\alpha }\right) ^{\frac{2}{Np-N-\alpha -2}}\left(
\frac{4q(Np-N-\alpha -2)}{|\mu |N(Nq+2\alpha -2Np)(q-2)\overline{S}c^{\frac{%
2N-q(N-2)}{4}}}\right) ^{\frac{4}{N(q-2)-4}}.
\end{equation*}

Set
\begin{eqnarray*}
c_{\ast } &:&=\left[ \left( \frac{Np-N-\alpha -2}{2\kappa (Np-N-\alpha )}%
\right) ^{1/2}\left( \frac{2(Nq+2\alpha -2Np)}{(N(q-2)-4)(Np-N-\alpha )}%
\right) ^{\frac{1}{Np-N-\alpha -2}}\right] ^{\frac{2(N(q-2)-4)}{2N-q(N-2)}}
\\
&&\text{ \ \ }\times \left( \frac{4q(Np-N-\alpha -2)}{|\mu
|N(q-2)(Nq+2\alpha -2Np)\overline{S}}\right) ^{\frac{2(N(q-2)-4)}{2N-q(N-2)}%
},
\end{eqnarray*}%
where $\kappa>0$ is ground state energy for the following Choquard equation
\cite{BLL}:
\begin{equation}
\left\{
\begin{array}{ll}
-\Delta u+\lambda u=\gamma (I_{\alpha }\ast |u|^{p})|u|^{p-2}u & \text{ in }%
\mathbb{R}^{N}, \\
\int_{\mathbb{R}^{N}}|u|^{2}dx=c>0, &
\end{array}%
\right.  \tag{$E_{\gamma }^{\infty }$}
\end{equation}%
For $0<c<c_{\ast }$, we have $U$ is nonempty.

Next, we define a set $\Phi (c):=\{u\in U$ $|$\ $g_{u}^{\prime }(1)=0\},$
which appears a natural constraint. Similarly, we can split $\Phi (c)$ into
the three following parts, i.e. $\Phi (c)=\Phi ^{+}(c)\cup \Phi ^{-}(c)\cup
\Phi ^{0}(c)$. Furthermore, we have the following results.

\begin{lemma}
\label{L5-12}Let $N\geq 2,\overline{p}<p<2_{\alpha }^{\ast },\overline{q}%
<q\leq 2^{\ast }$ with $Nq+2\alpha -2Np>0$. Then $\Phi ^{0}(c)=\emptyset $
for all $c>0.$
\end{lemma}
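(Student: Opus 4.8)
\textbf{Proof plan for Lemma \ref{L5-12}.} The plan is to argue by contradiction, exactly in the spirit of Lemma \ref{L4-2}. Suppose $\Phi^{0}(c)\neq\emptyset$ and pick $u\in\Phi^{0}(c)$. Then $g_{u}'(1)=0$ and $g_{u}''(1)=0$, so $u\in\mathcal{M}^{0}(c)$ in the sense of the quantities (\ref{e2-7})--(\ref{e2-8}), and moreover $u\in U$, i.e. $\gamma B(u)>\Gamma_{\ast}A(u)^{(Np-N-\alpha)/2}$. The idea is to extract from $g_{u}''(1)=0$ two inequalities for $A(u)$ that are incompatible under the range $Nq+2\alpha-2Np>0$, $q>2+4/N$, $p>(N+\alpha+2)/N$.

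First I would use the representation of $g_{u}''(1)$ in (\ref{e2-7}), namely $g_{u}''(1)=(N+\alpha+2-Np)A(u)-\frac{\mu N(q-2)(Nq-2Np+2\alpha)}{4q}C(u)=0$; since $\mu<0$ and $Nq+2\alpha-2Np>0$ this reads $(Np-N-\alpha-2)A(u)=\frac{|\mu|N(q-2)(Nq+2\alpha-2Np)}{4q}C(u)$. Feeding in the Gagliardo--Nirenberg inequality (\ref{e2-1}) for $C(u)$ gives an upper bound
\begin{equation*}
A(u)\leq\left(\frac{|\mu|N(q-2)(Nq+2\alpha-2Np)\overline{S}}{4q(Np-N-\alpha-2)}c^{\frac{2N-q(N-2)}{4}}\right)^{\frac{4}{N(q-2)-4}},
\end{equation*}
using $N(q-2)/4>1$. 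Next I would use the representation (\ref{e2-8}), $g_{u}''(1)=\frac{4-N(q-2)}{2}A(u)-\frac{\gamma(Np-N-\alpha)(2Np-qN-2\alpha)}{4p}B(u)=0$; since $N(q-2)>4$ and $Nq+2\alpha-2Np>0$ both sides are negative, giving $\frac{N(q-2)-4}{2}A(u)=\frac{\gamma(Np-N-\alpha)(Nq+2\alpha-2Np)}{4p}B(u)$. Together with $u\in U$, i.e. $\gamma B(u)>\Gamma_{\ast}A(u)^{(Np-N-\alpha)/2}$, and the definition of $\Gamma_{\ast}$ this yields a strict lower bound for $A(u)$; after substituting the precise value of $\Gamma_{\ast}$ the lower bound is of the same form
\begin{equation*}
A(u)>\left(\frac{2}{Np-N-\alpha}\right)^{\cdots}\left(\frac{4q(Np-N-\alpha-2)}{|\mu|N(Nq+2\alpha-2Np)(q-2)\overline{S}c^{\frac{2N-q(N-2)}{4}}}\right)^{\frac{4}{N(q-2)-4}},
\end{equation*}
i.e. exactly $A(u)<K$ from the second description of $U$ in (\ref{e5-3}) forced together with $A(u)$ bounded below by the reciprocal quantity. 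Comparing the two bounds, the exponents match precisely because $\Gamma_{\ast}$ was defined to make them match, and the resulting numerical inequality forces a relation on $p,q,N$ that contradicts one of the hypotheses $q>2+4/N$ or $p>(N+\alpha+2)/N$ (this is the analogue of ``$q<2+4/N$'' appearing at the end of Lemma \ref{L4-2}).

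The main obstacle will be bookkeeping: one must verify that after inserting the explicit constant $\Gamma_{\ast}$, the lower bound coming from $u\in U$ combined with (\ref{e2-8}) and the upper bound coming from (\ref{e2-7}) collapse to a contradiction with no room to spare, i.e. that $\Gamma_{\ast}$ has been calibrated correctly so that the inequality $A(u)\leq(\text{upper})$ and $A(u)>(\text{lower})$ cannot hold simultaneously for any admissible exponents. This is purely a matter of checking that the two exponents of $c$ and of the bracketed constants agree and that the remaining scalar inequality reduces to something like $Nq+2\alpha-2Np\le 0$ or $Np\le N+\alpha+2$, both excluded. I expect no conceptual difficulty beyond this algebra, since the structure is identical to Lemma \ref{L4-2} with the roles of the Hartree and power terms interchanged. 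Hence $\Phi^{0}(c)=\emptyset$ for all $c>0$. $\blacksquare$
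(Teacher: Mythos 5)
Your overall architecture is exactly the paper's: assume $u\in\Phi^{0}(c)\neq\emptyset$, exploit the two representations (\ref{e2-7}) and (\ref{e2-8}) of $g_{u}''(1)=0$, feed the Gagliardo--Nirenberg inequality (\ref{e2-1}) into one of them and the membership condition $\gamma B(u)>\Gamma_{\ast}A(u)^{(Np-N-\alpha)/2}$ into the other, and observe that $\Gamma_{\ast}$ is calibrated so that the two resulting bounds on $A(u)$ are incompatible, the incompatibility reducing to $\left(\tfrac{2}{Np-N-\alpha}\right)^{2/(Np-N-\alpha-2)}<1$, i.e.\ to $p>\tfrac{N+\alpha+2}{N}$. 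That is the right plan and the right source of the final contradiction.

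However, both of your displayed inequalities point the wrong way, and with the directions as you state them the contradiction does not close. From (\ref{e2-7}) you have $(Np-N-\alpha-2)A(u)=\tfrac{|\mu|N(q-2)(Nq+2\alpha-2Np)}{4q}C(u)\le \mathrm{const}\cdot A(u)^{N(q-2)/4}$; since $N(q-2)/4>1$, dividing by $A(u)$ gives $1\le \mathrm{const}\cdot A(u)^{(N(q-2)-4)/4}$, which is a strict \emph{lower} bound on $A(u)$ (this is (\ref{e5-22}) in the paper), not the upper bound you wrote --- in the mass-supercritical regime the relation $A(u)\lesssim A(u)^{N(q-2)/4}$ always pushes $A(u)$ away from zero, and your justification ``using $N(q-2)/4>1$'' is precisely the reason the bound goes the other way. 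Symmetrically, combining (\ref{e2-8}) with $\gamma B(u)>\Gamma_{\ast}A(u)^{(Np-N-\alpha)/2}$ gives $A(u)\gtrsim A(u)^{(Np-N-\alpha)/2}$ with exponent $\tfrac{Np-N-\alpha}{2}>1$, which yields a strict \emph{upper} bound $A(u)<K$, not the lower bound $A(u)>K$ you display (your own parenthetical ``i.e.\ exactly $A(u)<K$'' half-acknowledges this but contradicts the inequality sign you wrote). As stated, your two conclusions read ``$A(u)\le X$ and $A(u)>K$'', where $X$ is the reciprocal of the correct lower bound; these are simultaneously satisfiable for suitable $c$ and $\mu$, so no contradiction follows. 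Once both directions are flipped, the lower bound equals $\left(\tfrac{Np-N-\alpha}{2}\right)^{2/(Np-N-\alpha-2)}K>K$, and the clash with $A(u)<K$ is immediate. So the idea is correct and matches the paper, but the step as written would fail; the fix is to redo the two monotonicity arguments with the correct sign of the exponents $N(q-2)/4-1>0$ and $(Np-N-\alpha)/2-1>0$.
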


\begin{proof}
Assume on the contrary, that is $\Phi ^{0}(c)\neq \emptyset $ for some $c>0.$
Then for $u\in \Phi ^{0}(c)$, by (\ref{e2-1}) we have
\begin{eqnarray*}
(Np-N-\alpha -2)A(u) &=&\frac{|\mu |N(q-2)(Nq+2\alpha -2Np)}{4q}C(u) \\
&\leq &\frac{|\mu |N(q-2)(Nq+2\alpha -2Np)\overline{S}}{4q}c^{\frac{2N-q(N-2)%
}{4}}A(u)^{\frac{N(q-2)}{4}},
\end{eqnarray*}%
which indicates that
\begin{equation}
A(u)\geq \left[ \frac{4q(Np-N-\alpha -2)}{|\mu |N(q-2)(Nq+2\alpha -2Np)%
\overline{S}}c^{-\frac{2N-q(N-2)}{4}}\right] ^{\frac{4}{N(q-2)-4}}.
\label{e5-22}
\end{equation}%
On the other hand, we note that
\begin{equation*}
\frac{N(q-2)-4}{2}A(u)=\frac{\gamma (Np-N-\alpha )(Nq+2\alpha -2Np)}{4p}B(u)
\end{equation*}%
and $\gamma B(u)>\Gamma _{\ast }A(u)^{\frac{Np-N-\alpha }{2}}.$ Then it
follows that
\begin{equation*}
A(u)<\left( \frac{2}{Np-N-\alpha }\right) ^{\frac{2}{Np-N-\alpha -2}}\left(
\frac{4q(Np-N-\alpha -2)}{|\mu |N(q-2)(Nq+2\alpha -2Np)\overline{S}}c^{-%
\frac{2N-q(N-2)}{4}}\right) ^{\frac{4}{N(q-2)-4}},
\end{equation*}%
which contradicts with (\ref{e5-22}). The proof is complete.
\end{proof}

\begin{lemma}
\label{L5-6} Let $N\geq 2,\overline{p}<p<2_{\alpha }^{\ast },\overline{q}%
<q\leq 2^{\ast }$ with $Nq+2\alpha -2Np>0$. Then the functional $E_{\gamma
,\mu }$ is bounded from below\ and coercive on $S(c)$ for all $c>0.$
Moreover, the functional $E_{\gamma ,\mu }$ is bounded from below by a
positive constant on $\Phi ^{-}(c)$ for all $c>0.$
\end{lemma}

\begin{proof}
By (\ref{e2-3}) and H\"{o}lder's inequality, we have
\begin{equation*}
B(u)\leq \widetilde{C}\Vert u\Vert _{\frac{2Np}{N+\alpha }}^{2p}\leq
\widetilde{C}c^{\frac{q(N+\alpha )-2Np}{N(q-2)}}\Vert u\Vert _{q}^{\frac{%
2q(Np-N-\alpha )}{N(q-2)}},
\end{equation*}%
leading to
\begin{equation*}
C(u)\geq \widetilde{C}^{-\frac{N(q-2)}{2(Np-N-\alpha )}}c^{\frac{%
2Np-q(N+\alpha )}{2(Np-N-\alpha )}}B(u)^{\frac{N(q-2)}{2(Np-N-\alpha )}}.
\end{equation*}%
Then it holds
\begin{eqnarray*}
E_{\gamma ,\mu }(u) &=&\frac{1}{2}A(u)-\frac{\gamma }{2p}B(u)+\frac{|\mu |}{q%
}C(u) \\
&\geq &\frac{1}{2}A(u)-\frac{\gamma }{2p}B(u)+\frac{|\mu |}{q}\widetilde{C}%
^{-\frac{N(q-2)}{2(Np-N-\alpha )}}c^{\frac{2Np-q(N+\alpha )}{2(Np-N-\alpha )}%
}B(u)^{\frac{N(q-2)}{2(Np-N-\alpha )}}.
\end{eqnarray*}%
which implies that $E_{\gamma ,\mu }$ is bounded from below and coercive on $%
S(c)$ for all $c>0$, since $\frac{N(q-2)}{2(Np-N-\alpha )}>1$. Indeed, the
quantity
\begin{equation*}
-\frac{\gamma }{2p}B(u)+\frac{|\mu |}{q}\widetilde{C}^{-\frac{N(q-2)}{%
2(Np-N-\alpha )}}c^{\frac{2Np-q(N+\alpha )}{2(Np-N-\alpha )}}B(u)^{\frac{%
N(q-2)}{2(Np-N-\alpha )}}
\end{equation*}%
has a negative lower bound on $S(c)$.

Let $u\in \Phi ^{-}(c)$. Then we have $A(u)-\frac{\gamma (Np-N-\alpha )}{2p}%
B(u)+\frac{|\mu |N(q-2)}{2q}C(u)=0$. It follows from (\ref{e2-3}) that%
\begin{eqnarray*}
A(u) &=&\frac{\gamma (Np-N-\alpha )}{2p}B(u)-\frac{|\mu |N(q-2)}{2q}C(u) \\
&\leq &\frac{\gamma \overline{B}(Np-N-\alpha )}{2p}c^{\frac{N+\alpha -p(N-2)%
}{2}}A(u)^{\frac{Np-N-\alpha }{2}},
\end{eqnarray*}%
which implies that $A(u)\geq D_{4}$ for some $D_{4}>0$. Using this, we have
\begin{eqnarray*}
E_{\gamma ,\mu }(u) &=&\frac{N(q-2)-4}{2N(q-2)}A(u)-\frac{\gamma (Nq+2\alpha
-2Np)}{2pN(q-2)}B(u) \\
&>&\frac{(Np-N-\alpha -2)(N(q-2)-4)}{2N(Np-N-\alpha )(q-2)}D_{4}>0.
\end{eqnarray*}%
Hence, $E_{\gamma ,\mu }$ is bounded from below by a positive constant on $%
\Phi ^{-}(c)$ for all $c>0.$ This completes the proof.
\end{proof}

\begin{lemma}
\label{L5-13} Let $N\geq 2,\overline{p}<p<2_{\alpha }^{\ast },\overline{q}%
<q\leq 2^{\ast }$ with $Nq+2\alpha -2Np>0$. Then for each $u\in U,$ we have $%
\inf_{s\geq 0}E_{\gamma ,\mu }(u_{s})<0.$
\end{lemma}

\begin{proof}
Let $u\in U.$ Then for any $s>0$, we have that
\begin{equation*}
E_{\gamma ,\mu }(u_{s})=s^{\frac{N(q-2)}{2}}\left[ \frac{1}{2}A(u)s^{\frac{%
4-N(q-2)}{2}}-\frac{\gamma }{2p}B(u)s^{\frac{2Np-Nq-2\alpha }{2}}+\frac{|\mu
|}{q}C(u)\right] .
\end{equation*}%
Let
\begin{equation*}
k(t):=\frac{1}{2}A(u)s^{\frac{4-N(q-2)}{2}}-\frac{\gamma }{2p}B(u)s^{\frac{%
2Np-Nq-2\alpha }{2}}.
\end{equation*}%
Clearly, $E_{\gamma ,\mu }(u_{s})=0$ if and only if $k\left( s\right) +\frac{%
|\mu |}{q}C(u)=0.$ It is easily seen that $k(s_{0})=0,\lim_{s\rightarrow
0^{+}}k(s)=\infty $ and $\lim_{s\rightarrow \infty }k(s)=0,$ where $%
s_{0}:=\left( \frac{pA(u)}{\gamma B(u)}\right) ^{1/(Np-N-\alpha -2)}$. By
calculating the derivative of $k(t)$, we get
\begin{equation*}
k^{\prime }(s)=s^{\frac{2-N(q-2)}{2}}\left[ \frac{\gamma (Nq+2\alpha
-2Np)s^{Np-N-\alpha -2}}{4p}B(u)-\frac{(q-2)N-4}{4}A(u)\right] .
\end{equation*}%
This indicates that $k(t)$ is decreasing when $0<s<\left( \frac{%
p(N(q-2)-4)A(u)}{\gamma (Nq+2\alpha -2Np)B(u)}\right) ^{1/(Np-N-\alpha -2)}$
and is increasing when $s>\left( \frac{p(N(q-2)-4)A(u)}{\gamma (Nq+2\alpha
-2Np)B(u)}\right) ^{1/(Np-N-\alpha -2)}$, and so we have%
\begin{eqnarray*}
\inf_{s>0}k(s) &=&-\frac{Np-N-\alpha -2}{Nq+2\alpha -2Np}\left( \frac{%
p(N(q-2)-4)A(u)}{\gamma (Nq+2\alpha -2Np)B(u)}\right) ^{\frac{4-N\left(
q-2\right) }{2\left( Np-N-\alpha -2\right) }}A(u) \\
&<&-\frac{|\mu| \overline{S}}{q}c^{\frac{2N-q(N-2)}{4}}A(u)^{\frac{N(q-2)}{4}%
}<-\frac{|\mu| }{q}C(u),
\end{eqnarray*}%
which implies that there exist two constants $\hat{s}^{(i)}>0$ $(i=1,2)$
satisfying
\begin{equation*}
\hat{s}^{(1)}<\left( \frac{p(N(q-2)-4)A(u)}{\gamma (Nq+2\alpha -2Np)B(u)}%
\right) ^{1/(Np-N-\alpha -2)}<\hat{s}^{(2)}
\end{equation*}%
such that $E_{\gamma ,\mu }\left( u_{\hat{s}^{(i)}}\right) =0$ for $i=1,2.$
Moreover, we get
\begin{equation*}
E_{\gamma ,\mu }\left[ \left( \frac{p(N(q-2)-4)A(u)}{\gamma (Nq+2\alpha
-2Np)B(u)}\right) ^{1/(Np-N-\alpha -2)}\right] <0,
\end{equation*}%
which implies that $\inf_{s\geq 0}E_{\gamma ,\mu }(u_{s})<0.$ The proof is
complete.
\end{proof}

For $u\in U$, we define
\begin{equation*}
\widetilde{s}_{u}=\left( \frac{2pA( u) }{\gamma ( Np-N-\alpha ) B( u) }%
\right) ^{1/(Np-N-\alpha -2)}.
\end{equation*}
By similar arguments in Lemma \ref{L4-3}, we have the following result.

\begin{lemma}
\label{L5-7} Let $N\geq 2,\overline{p}<p<2_{\alpha }^{\ast },\overline{q}%
<q\leq 2^{\ast }$ with $Nq+2\alpha -2Np>0$. Then for each $u\in U$ there
exist two positive constants $s_{\star }^{+}(u)$ and $s_{\star }^{-}(u)$
satisfying $\widetilde{s}_{u}<s_{\star }^{-}(u)<s_{u}^{\star }<s_{\star
}^{+}(u)$ such that $u_{s_{\star }^{\pm }(u)}\in \Phi ^{\pm }(c),$ where $%
s_{u}^{\star }$ is as in (\ref{e5-12}). Furthermore, we have $E_{\gamma ,\mu
}(u_{s_{\star }^{+}(u)})=\inf_{s\geq s_{\star }^{-}(u)}E_{\gamma ,\mu
}(u_{t})=\inf_{s\geq 0}E_{\gamma ,\mu }(u_{s})<0.$
\end{lemma}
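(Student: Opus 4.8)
The plan is to run, for the open set $U$ introduced in (\ref{e5-3}), the same fibering analysis as in Lemma \ref{L4-3}, but adapted to the signs $\gamma >0$, $\mu <0$ and to the condition $Nq+2\alpha -2Np>0$. Fix $u\in U$; recall from Section 2 that $u_{t}\in \mathcal{M}(c)$ if and only if $g_{u}^{\prime }(t)=0$. Dividing the formula for $g_{u}^{\prime }(t)$ by $t^{N(q-2)/2-1}$, this is equivalent to $\widehat{H}(t)=\tfrac{\mu N(q-2)}{2q}C(u)$, where $\widehat{H}(t):=A(u)\,t^{\frac{4-N(q-2)}{2}}-\tfrac{\gamma (Np-N-\alpha )}{2p}B(u)\,t^{\frac{2Np-Nq-2\alpha }{2}}$, and since $\mu <0$ the right-hand side is the negative number $-\tfrac{|\mu |N(q-2)}{2q}C(u)$. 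Because $q>2+\tfrac{4}{N}$ and $2Np-Nq-2\alpha <0$, both exponents in $\widehat{H}$ are negative, and their difference equals $-(Np-N-\alpha -2)<0$, so $t^{\frac{4-N(q-2)}{2}}$ is the more singular monomial; hence $\widehat{H}(t)\to +\infty $ as $t\to 0^{+}$ and $\widehat{H}(t)\to 0^{-}$ as $t\to \infty $. A short computation of $\widehat{H}^{\prime }$ then shows that $\widehat{H}$ is strictly decreasing on $(0,t_{u}^{\star })$ and strictly increasing on $(t_{u}^{\star },\infty )$, with $t_{u}^{\star }$ exactly the value in (\ref{e5-12}), while $\widehat{H}$ vanishes precisely at the point $\widetilde{t}_{u}$ where its two monomials coincide, so that $\widetilde{t}_{u}<t_{u}^{\star }$ and $\widehat{H}(\widetilde{t}_{u})=0$.

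The crucial step is to check that $\inf_{t>0}\widehat{H}(t)=\widehat{H}(t_{u}^{\star })$ lies strictly below the target value $-\tfrac{|\mu |N(q-2)}{2q}C(u)$. I would evaluate $\widehat{H}(t_{u}^{\star })$ by substituting the critical-point relation $\widehat{H}^{\prime }(t_{u}^{\star })=0$ back into $\widehat{H}$, obtaining $\widehat{H}(t_{u}^{\star })=-C_{0}\,A(u)^{a_{1}}B(u)^{-a_{2}}$ for some $C_{0}>0$ and suitable positive exponents; then, using $u\in U$, i.e. $\gamma B(u)>\Gamma _{\ast }A(u)^{(Np-N-\alpha )/2}$, together with the Gagliardo--Nirenberg inequality (\ref{e2-1}) in the form $C(u)\le \overline{S}\,c^{\frac{2N-q(N-2)}{4}}A(u)^{N(q-2)/4}$, one verifies that $\Gamma _{\ast }$ has been calibrated exactly so that $\widehat{H}(t_{u}^{\star })<-\tfrac{|\mu |N(q-2)}{2q}C(u)$; the equivalent description of $U$ via $(t_{u}^{\star })^{2}A(u)<K$ in (\ref{e5-3}) is precisely the algebraic identity underlying this calibration, and I expect this to be the only genuinely technical point. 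Granting it, the equation $\widehat{H}(t)=-\tfrac{|\mu |N(q-2)}{2q}C(u)$ has exactly two roots $t_{\star }^{-}(u)<t_{\star }^{+}(u)$, one on each side of $t_{u}^{\star }$; since $\widehat{H}(\widetilde{t}_{u})=0>-\tfrac{|\mu |N(q-2)}{2q}C(u)$ with $\widetilde{t}_{u}<t_{u}^{\star }$ and $\widehat{H}$ decreasing on $(0,t_{u}^{\star })$, one gets $\widetilde{t}_{u}<t_{\star }^{-}(u)<t_{u}^{\star }<t_{\star }^{+}(u)$, with $u_{t_{\star }^{\pm }(u)}\in \mathcal{M}(c)$.

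Finally I would read off the second-order information and identify the minimum. Since $B$ and $A^{(Np-N-\alpha )/2}$ scale identically under $v\mapsto v_{t}$, the set $U$ is invariant under this dilation, so $u_{t_{\star }^{\pm }(u)}\in U$ automatically. Differentiating the factorization $g_{u}^{\prime }(t)=t^{N(q-2)/2-1}\big(\widehat{H}(t)-\tfrac{\mu N(q-2)}{2q}C(u)\big)$ at a point where the bracket vanishes gives $g_{u}^{\prime \prime }(t)=t^{N(q-2)/2-1}\widehat{H}^{\prime }(t)$, hence $g_{u_{t}}^{\prime \prime }(1)=t^{N(q-2)/2+1}\widehat{H}^{\prime }(t)$; as $\widehat{H}$ is decreasing at $t_{\star }^{-}(u)$ and increasing at $t_{\star }^{+}(u)$, this yields $g_{u_{t_{\star }^{-}(u)}}^{\prime \prime }(1)<0$ and $g_{u_{t_{\star }^{+}(u)}}^{\prime \prime }(1)>0$, so $u_{t_{\star }^{-}(u)}\in \Phi ^{-}(c)$ and $u_{t_{\star }^{+}(u)}\in \Phi ^{+}(c)$. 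The same factorization shows $g_{u}$ is increasing on $(0,t_{\star }^{-}(u))$, decreasing on $(t_{\star }^{-}(u),t_{\star }^{+}(u))$ and increasing on $(t_{\star }^{+}(u),\infty )$; since $g_{u}(t)\to 0$ as $t\to 0^{+}$ and, by Lemma \ref{L5-13}, $\inf_{t\ge 0}E_{\gamma ,\mu }(u_{t})<0$, this negative infimum must be attained at the unique local minimum $t_{\star }^{+}(u)$, giving $E_{\gamma ,\mu }(u_{t_{\star }^{+}(u)})=\inf_{t\ge t_{\star }^{-}(u)}E_{\gamma ,\mu }(u_{t})=\inf_{t\ge 0}E_{\gamma ,\mu }(u_{t})<0$, which is the assertion.
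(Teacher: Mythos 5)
Your proposal is correct and follows essentially the same route as the paper: the same auxiliary function $\widetilde h(t)=t^{\frac{4-N(q-2)}{2}}A(u)-\tfrac{\gamma(Np-N-\alpha)}{2p}t^{\frac{2Np-Nq-2\alpha}{2}}B(u)$ (your $\widehat H$), the same monotonicity/critical-point analysis locating $\widetilde t_u<t_u^{\star}$, the same use of $u\in U$ together with the Gagliardo--Nirenberg inequality to show $\inf_{t>0}\widetilde h(t)<\tfrac{\mu N(q-2)}{2q}C(u)$, and the same second-order and Lemma \ref{L5-13} arguments to conclude. The only step you defer rather than carry out is the explicit evaluation of $\widetilde h(t_u^{\star})$ and the verification that $\Gamma_{\ast}$ makes the key inequality hold, which the paper writes out in full; your dilation-invariance remark justifying $u_{t_{\star}^{\pm}(u)}\in U$ is in fact slightly more explicit than the paper's.
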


We now define%
\begin{equation*}
U_{r}:=U\cap H_{r}^{1}(\mathbb{R}^{N}),\text{ }\Phi _{r}(c):=\Phi (c)\cap
H_{r}^{1}(\mathbb{R}^{N})\text{ and }\Phi _{r}^{\pm }(c):=\Phi ^{\pm
}(c)\cap H_{r}^{1}(\mathbb{R}^{N}).
\end{equation*}%
Similar to Lemma \ref{L5-8}, by Lemmas \ref{L5-13}--\ref{L5-7}, we define%
\begin{equation}
d^{-}(c):=\inf_{u\in \Phi ^{-}(c)}E_{\gamma ,\mu }(u)=\inf_{u\in \Phi
_{r}^{-}(c)}E_{\gamma ,\mu }(u)>0,  \label{e5-9}
\end{equation}%
and%
\begin{equation}
d^{+}(c):=\inf_{u\in \Phi ^{+}(c)}E_{\gamma ,\mu }(u)=\inf_{u\in \Phi
_{r}^{+}(c)}E_{\gamma ,\mu }(u)<0.  \label{e5-20}
\end{equation}

Since $U_{r}$ has a boundary, in order to guarantee that our deformation
arguments happen inside $U_{r},$ we need the following lemma.

\begin{lemma}
\label{L5-18} Let $N\geq 2,\overline{p}<p<2_{\alpha }^{\ast },\overline{q}%
<q\leq 2^{\ast }$ with $Nq+2\alpha -2Np>0$. If $\{w_{n}\}\subset U_{r}$ is a
sequence with $w_{n}\rightarrow w\in \partial U_{r}$ strongly in $H^{1}(%
\mathbb{R}^{N})$, then the following statements are true.\newline
$(i)$ There exist a sequence $\{s_{\star }^{-}(w_{n})\}$ with $s_{\star
}^{-}(w_{n})\rightarrow s_{\infty }^{-}>0$ satisfying $g_{w_{s_{\infty
}^{-}}}^{\prime \prime }(1)\leq 0$ and a constant $\widetilde{M}=\widetilde{M%
}(\mu ,c)>0$ satisfying $\widetilde{M}\rightarrow \infty $ as $c\rightarrow
0 $ such that $E_{\gamma ,\mu }(w_{s_{\infty }^{-}})>\widetilde{M}$ for $%
0<c<c_{\ast };$\newline
$(ii)$ There exist a sequence $\{s_{\star }^{+}(w_{n})\}$ with $s_{\star
}^{+}(w_{n})\rightarrow s_{\infty }^{+}>0$ satisfying $g_{w_{s_{\infty
}^{+}}}^{\prime \prime }(1)\geq 0$ and a constant $\widetilde{M}>0$ large
enough such that $E_{\gamma ,\mu }(w_{s_{\infty }^{+}})>\widetilde{M}$ for $%
c>0$ sufficiently small;\newline
$(iii)$ There exists a sequence $\{s_{\star }^{+}(w_{n})\}$ with $s_{\star
}^{+}(w_{n})\rightarrow \infty $ such that $E_{\gamma ,\mu
}((w_{n})_{s_{\star }^{+}(w_{n})})\rightarrow \infty $ for $0<c<c_{\ast }.$
\end{lemma}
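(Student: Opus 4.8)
The plan is to carry Lemma~\ref{L4-13} over to the present configuration, interchanging the roles of $\gamma$ and $\mu$ and replacing the set $V$ (resp.\ $\Gamma$) by $U$ (resp.\ $\Gamma_{\ast}$): item $(i)$ concerns the ``local maximum'' dilation parameter $t_{\star}^{-}$ and is the literal analogue of Lemma~\ref{L4-13}, while items $(ii)$--$(iii)$ concern the ``local minimum'' parameter $t_{\star}^{+}$ and need one extra dichotomy. The common preliminary remark is that the condition cutting out $U$ in (\ref{e5-3}), $\gamma B(u)>\Gamma_{\ast}A(u)^{(Np-N-\alpha)/2}$, is invariant under the dilation $u\mapsto u_{t}=t^{N/2}u(tx)$, since $A(u_{t})=t^{2}A(u)$ and $B(u_{t})=t^{Np-N-\alpha}B(u)$; hence $\partial U$ is characterised by $\gamma B(u)=\Gamma_{\ast}A(u)^{(Np-N-\alpha)/2}$, and $w\in\partial U$ forces $w_{t}\in\partial U$ for every $t>0$. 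Moreover $w_{n}\to w$ strongly in $H^{1}(\mathbb{R}^{N})$ with $w\in S(c)$ nonzero gives $A(w_{n})\to A(w)>0$, $B(w_{n})\to B(w)>0$, $C(w_{n})\to C(w)>0$, hence $\widetilde{t}_{w_{n}}\to\widetilde{t}_{w}$ and $t_{w_{n}}^{\star}\to t_{w}^{\star}$, all limits being finite and strictly positive.

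For $(i)$: Lemma~\ref{L5-7} gives $\widetilde{t}_{w_{n}}<t_{\star}^{-}(w_{n})<t_{w_{n}}^{\star}$, so $\{t_{\star}^{-}(w_{n})\}$ is bounded and bounded away from $0$, and a subsequence converges to some $t_{\infty}^{-}\in(0,\infty)$. Since $(w_{n})_{t_{\star}^{-}(w_{n})}\in\Phi^{-}(c)\subset\mathcal{M}(c)$ and the convergence is strong, letting $n\to\infty$ in $Q((w_{n})_{t_{\star}^{-}(w_{n})})=0$ and in $g_{(w_{n})_{t_{\star}^{-}(w_{n})}}^{\prime\prime}(1)<0$ yields $Q(w_{t_{\infty}^{-}})=0$ and $g_{w_{t_{\infty}^{-}}}^{\prime\prime}(1)\le 0$. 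From $Q(w_{t_{\infty}^{-}})=0$, dropping the nonpositive term $\frac{\mu N(q-2)}{2q}C(w_{t_{\infty}^{-}})$ (here $\mu<0$), one gets $A(w_{t_{\infty}^{-}})\le\frac{\gamma(Np-N-\alpha)}{2p}B(w_{t_{\infty}^{-}})$; feeding in the boundary identity $\gamma B(w_{t_{\infty}^{-}})=\Gamma_{\ast}A(w_{t_{\infty}^{-}})^{(Np-N-\alpha)/2}$ and using $Np-N-\alpha>2$ gives
\begin{equation*}
A(w_{t_{\infty}^{-}})\ge\left(\frac{2p}{(Np-N-\alpha)\Gamma_{\ast}}\right)^{\frac{2}{Np-N-\alpha-2}},
\end{equation*}
which tends to $+\infty$ as $|\mu|\to 0$ since $\Gamma_{\ast}\to 0$. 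Writing $E_{\gamma,\mu}$ on $\mathcal{M}(c)$ as in the proof of Lemma~\ref{L5-5} and inserting $g_{w_{t_{\infty}^{-}}}^{\prime\prime}(1)\le 0$ gives
\begin{equation*}
E_{\gamma,\mu}(w_{t_{\infty}^{-}})\ge\frac{(Np-N-\alpha-2)(N(q-2)-4)}{2N(Np-N-\alpha)(q-2)}\,A(w_{t_{\infty}^{-}})=:\widetilde{M}(\mu,c),
\end{equation*}
and $\widetilde{M}(\mu,c)\to\infty$ as $|\mu|\to 0$, which is $(i)$.

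For $(ii)$ and $(iii)$ one tracks $t_{\star}^{+}(w_{n})$, for which Lemma~\ref{L5-7} provides only $t_{\star}^{+}(w_{n})>t_{w_{n}}^{\star}$, so one distinguishes whether $\{t_{\star}^{+}(w_{n})\}$ is bounded. If it diverges, then the hypothesis $Nq+2\alpha-2Np>0$ says that $\tfrac{N(q-2)}{2}$ is the largest of the three exponents occurring in $g_{w_{n}}(t)$ and it carries the positive coefficient $\tfrac{|\mu|}{q}C(w_{n})$ with $C(w_{n})\to C(w)>0$, whence $I^{+}(w_{n})=E_{\gamma,\mu}((w_{n})_{t_{\star}^{+}(w_{n})})=g_{w_{n}}(t_{\star}^{+}(w_{n}))\to+\infty$; this is $(iii)$, and since along the minimising sequences to which the lemma is applied $I^{+}(w_{n})=\inf_{t\ge0}E_{\gamma,\mu}((w_{n})_{t})$ stays bounded, this alternative is thereby excluded. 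If $\{t_{\star}^{+}(w_{n})\}$ is bounded, a subsequence gives $t_{\star}^{+}(w_{n})\to t_{\infty}^{+}\in(0,\infty)$, and passing to the limit in $Q=0$ and $g''>0$ produces $Q(w_{t_{\infty}^{+}})=0$ and $g_{w_{t_{\infty}^{+}}}^{\prime\prime}(1)\ge 0$; the lower bound $E_{\gamma,\mu}(w_{t_{\infty}^{+}})>\widetilde{M}$ then follows by a variant of the scheme of $(i)$, using $Q(w_{t_{\infty}^{+}})=0$ together with $\mu<0$ and the dilation-invariant boundary identity to render $A(w_{t_{\infty}^{+}})$ arbitrarily large as $|\mu|\to 0$ and then reading the energy off the $\mathcal{M}(c)$-representation of $E_{\gamma,\mu}$, which is $(ii)$.

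The bookkeeping — the strong-convergence passages to the limit, the monomial asymptotics of $g_{u}$, and the rescaling identities — is routine. The point requiring care is to make sure the limiting parameters $t_{\infty}^{\pm}$ are finite and strictly positive and that the relation defining $\partial U$ transfers from $w$ to $w_{t_{\infty}^{\pm}}$ by its dilation invariance; the genuinely delicate ingredient, already present in Lemma~\ref{L4-13}, is keeping explicit track of how the barrier $\widetilde{M}$ depends on $\mu$, so that $\widetilde{M}\to\infty$ as $|\mu|\to 0$ — this is exactly what will later force the deformations for $E_{\gamma,\mu}$ restricted to $\Phi^{-}(c)$, and for its local minimisation on $\{u\in U:\ A(u)<r_{0}\}$, to stay inside $U$ once $|\mu|$ is small.
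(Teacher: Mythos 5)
Your parts $(i)$ and $(iii)$ are sound and follow the paper's route. For $(i)$ the paper merely declares the argument ``very similar to Lemma \ref{L4-13}'', and your instantiation is the correct one: boundedness of $t_{\star }^{-}(w_{n})$ from $\widetilde{t}_{w_{n}}<t_{\star }^{-}(w_{n})<t_{w_{n}}^{\star }$, passage to the limit in $Q=0$ and $g''<0$, the dilation-invariant boundary identity $\gamma B=\Gamma _{\ast }A^{(Np-N-\alpha )/2}$ forcing $A(w_{t_{\infty }^{-}})\geq \bigl(2p/((Np-N-\alpha )\Gamma _{\ast })\bigr)^{2/(Np-N-\alpha -2)}\rightarrow \infty $, and finally $E_{\gamma ,\mu }\geq \frac{(Np-N-\alpha -2)(N(q-2)-4)}{2N(Np-N-\alpha )(q-2)}A$ from $g''\leq 0$. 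For $(iii)$ your direct monomial asymptotics of $g_{w_{n}}(t)$ (largest exponent $N(q-2)/2$ carrying the coefficient $\frac{|\mu |}{q}C(w_{n})\rightarrow \frac{|\mu |}{q}C(w)>0$) are equivalent to the paper's use of the lower bound of Lemma \ref{L5-6}.

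Part $(ii)$, however, contains a genuine gap. You propose to conclude by ``a variant of the scheme of $(i)$'', making $A(w_{t_{\infty }^{+}})$ large and ``reading the energy off the $\mathcal{M}(c)$-representation''. This cannot work. On $\{Q=0\}$ the two representations are $E=\frac{N(q-2)-4}{2N(q-2)}A-\frac{\gamma (Nq+2\alpha -2Np)}{2pN(q-2)}B$ and $E=\frac{Np-N-\alpha -2}{2(Np-N-\alpha )}A-\frac{|\mu |(Nq+2\alpha -2Np)}{2q(Np-N-\alpha )}C$, and in both the second term is negative since $Nq+2\alpha -2Np>0$. In case $(i)$ the inequality $g''\leq 0$ bounds $\gamma B$ \emph{above} by a multiple of $A$ and converts the first representation into a lower bound $E\geq cA$; in case $(ii)$ the inequality $g''\geq 0$ points the other way and yields only the \emph{upper} bound $E\leq \frac{(Np-N-\alpha -2)(N(q-2)-4)}{2N(Np-N-\alpha )(q-2)}A$. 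Worse, on $\{Q=0\}\cap \partial U$ the boundary identity gives exactly $E=\frac{N(q-2)-4}{2N(q-2)}A-\frac{(Nq+2\alpha -2Np)\Gamma _{\ast }}{2pN(q-2)}A^{(Np-N-\alpha )/2}$, which tends to $-\infty $ as $A\rightarrow \infty $; so ``$A$ large'' by itself proves nothing positive (consistently, every point of $\Phi ^{+}(c)$ has negative energy, whence $d^{+}(c)<0$). The ingredient your sketch omits is the coercivity in $B$ of Lemma \ref{L5-6}: since $2Np<q(N+\alpha )$, interpolation gives $C(u)\geq \widetilde{C}^{-\theta }c^{\frac{2Np-q(N+\alpha )}{2(Np-N-\alpha )}}B(u)^{\theta }$ with $\theta =\frac{N(q-2)}{2(Np-N-\alpha )}>1$, hence $E_{\gamma ,\mu }(u)\geq \frac{|\mu |}{q}\widetilde{C}^{-\theta }c^{\frac{2Np-q(N+\alpha )}{2(Np-N-\alpha )}}B(u)^{\theta }-\frac{\gamma }{2p}B(u)$ on all of $S(c)$. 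The paper then uses $g''_{w_{t_{\infty }^{+}}}(1)\geq 0$ together with the lower bound on $A(w_{t_{\infty }^{+}})$ to force $B(w_{t_{\infty }^{+}})\geq \mathrm{const}\cdot \Gamma _{\ast }^{-2/(Np-N-\alpha -2)}\rightarrow \infty $ as $|\mu |\rightarrow 0$, and checks that this blow-up of $B$ overwhelms the factor $|\mu |$ in front of the superlinear term. That balancing act is the entire content of $(ii)$ and must be supplied.
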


\begin{proof}
We only prove the second and third statements, since the first one is very
similar to that of Lemma \ref{L4-13}.

Let $\{w_{n}\}\subset U_{r}$ such that $w_{n}\rightarrow w\in \partial U_{r}$
strongly in $H^{1}(\mathbb{R}^{N})$ as $n\rightarrow \infty $. Then it holds
$s_{w_{n}}^{\star }\rightarrow s_{w}^{\star }\neq 0\quad $and$\quad
\widetilde{s}_{w_{n}}\rightarrow \widetilde{s}_{w}\neq 0\quad $as$%
\,n\rightarrow \infty .$ Since $w_{n}\in U_{r}$ and $w\in \partial U_{r}$,
we have $(s_{w_{n}}^{\star })^{2}A(w_{n})<K$ and $(s_{w}^{\star })^{2}A(w)=K$%
. Moreover, by Lemma \ref{L5-7} one has $\widetilde{s}_{w_{n}}<s_{\star
}^{-}(w_{n})\leq s_{w_{n}}^{\star }\leq s_{\star }^{+}(w_{n}).$

Next, we consider two separate cases.\newline
Case $(I):\{s_{\star }^{+}(w_{n})\}$ is bounded. Then we can assume that $%
\{s_{\star }^{+}(w_{n})\}$ up to a subsequence, converges to $s_{\infty
}^{+}\neq 0$. Since $w_{n}\rightarrow w$ strongly in $H^{1}(\mathbb{R}^{N})$
and $Q((w_{n})_{s_{\star }^{+}(w_{n})})=0$, we obtain that $Q(w_{s_{\infty
}^{+}})=0$, which implies that
\begin{equation*}
A(w_{s_{\infty }^{+}})\leq \frac{\gamma (Np-N-\alpha )}{2p}B(w_{s_{\infty
}^{+}})=\frac{\Gamma _{\ast }(Np-N-\alpha )}{2p}A(w_{s_{\infty }^{+}})^{%
\frac{Np-N-\alpha }{2}},
\end{equation*}%
leading to
\begin{equation}
A(w_{s_{\infty }^{+}})\geq \left( \frac{2p}{\Gamma _{\ast }(Np-N-\alpha )}%
\right) ^{\frac{2}{Np-N-\alpha -2}}.  \label{e5-13}
\end{equation}%
Since $g_{(w_{n})_{s_{\star }^{+}(w_{n})}}^{\prime \prime }(1)>0$, we have $%
g_{w_{s_{\infty }^{+}}}^{\prime \prime }(1)\geq 0$, namely,
\begin{equation}
\frac{(Np-N-\alpha )(Nq+2\alpha -2Np)}{2p(N(q-2)-4)}B(w_{s_{\infty
}^{+}})\geq A(w_{s_{\infty }^{+}}).  \label{e5-14}
\end{equation}%
It follows from (\ref{e5-13}) and (\ref{e5-14}) that%
\begin{equation}
B(w_{s_{\infty }^{+}})>\frac{2p(N(q-2)-4)}{(Np-N-\alpha )(Nq+2\alpha -2Np)}%
\left( \frac{2p}{\Gamma _{\ast }(Np-N-\alpha )}\right) ^{\frac{2}{%
Np-N-\alpha -2}}.  \label{e5-19}
\end{equation}%
Moreover, from Lemma \ref{L5-6}, we observe that
\begin{equation}
E_{\gamma ,\mu }(w_{s_{\infty }^{+}})\geq \frac{|\mu |}{q}\widetilde{C}^{-%
\frac{N(q-2)}{2(Np-N-\alpha )}}c^{\frac{2Np-q(N+\alpha )}{2(Np-N-\alpha )}%
}B(w_{s_{\infty }^{+}})^{\frac{N(q-2)}{2(Np-N-\alpha )}}-\frac{\gamma }{2p}%
B(w_{s_{\infty }^{+}}).  \label{e5-17}
\end{equation}%
Then it follows from (\ref{e5-19})--(\ref{e5-17}) that there exists a
constant $\widetilde{M}>0$ large enough such that $E_{\gamma ,\mu
}(w_{s_{\infty }^{+}})>\widetilde{M}$ for $c>0$ sufficiently small. Thus,
the second statement are proved.\newline
Case $(II):\{s_{\star }^{+}(w_{n})\}$ is unbounded. By Lemma \ref{L5-6}, we
have
\begin{eqnarray*}
E_{\gamma ,\mu }((w_{n})_{s_{\star }^{+}(w_{n})}) &\geq &\frac{|\mu |}{q}%
\widetilde{C}^{-\frac{N(q-2)}{2(Np-N-\alpha )}}c^{\frac{2Np-q(N+\alpha )}{%
2(Np-N-\alpha )}}\left( s_{\star }^{+}(w_{n})\right) ^{\frac{N(q-2)}{2}%
}B(w_{n})^{\frac{N(q-2)}{2(Np-N-\alpha )}} \\
&&-\frac{\gamma }{2p}\left( t_{\star }^{+}(w_{n})\right) ^{Np-N-\alpha
}B(w_{n}),
\end{eqnarray*}%
which implies that $E_{\gamma ,\mu }((w_{n})_{s_{\star
}^{+}(w_{n})})\rightarrow \infty ,$ since $Nq+2\alpha -2Np>0$. This
completes the proof.
\end{proof}

Similar to Lemma \ref{L4-8}, we give the following lemma.

\begin{lemma}
\label{L5-19} Let $\mathcal{F}$ be a homotopy stable family of compact
subsets of $U_{r}$ with closed boundary $B$ and let
\begin{equation*}
e_{\mathcal{F}}^{-}:=\inf_{H\in \mathcal{F}}\max_{u\in H}I^{-}(u).
\end{equation*}%
Suppose that $\Theta $ is contained in a connected component of $\Phi
_{r}^{-}(c)$ and $\max \{\sup I^{-}(\Theta ),0\}<e_{\mathcal{F}}^{-}<\infty $%
. Then there exists a Palais--Smale sequence $\{u_{n}\}\subset \Phi
_{r}^{-}(c)$ for $E_{\gamma ,\mu }$ restricted to $U_{r}$ at level $e_{%
\mathcal{F}}^{-}$ for $c>0$ sufficiently small.
\end{lemma}

In view of (\ref{e5-9}) one has%
\begin{equation*}
e_{\mathcal{F}}^{-}=\inf_{H\in \mathcal{F}}\max_{u\in H}I^{-}(u)=\inf_{u\in
U_{r}}I^{-}(u)=\inf_{u\in \Phi _{r}^{-}(c)}E_{\gamma ,\mu }(u)=\inf_{u\in
\Phi ^{-}(c)}E_{\gamma ,\mu }(u)=d^{-}(c).
\end{equation*}%
Then by Lemma \ref{L5-19}, similar to Lemma \ref{L4-17}, we have the
following result.

\begin{lemma}
\label{L5-20} Let $N\geq 2,\overline{p}<p<2_{\alpha }^{\ast },\overline{q}%
<q\leq 2^{\ast }$ with $Nq+2\alpha -2Np>0$. Then there exists a
Palais--Smale sequence $\{u_{n}\}\subset \Phi _{r}^{-}(c)$ for $E_{\gamma
,\mu }$ restricted to $S(c)$ at the level $d^{-}(c)$ for $c>0$ sufficiently
small.
\end{lemma}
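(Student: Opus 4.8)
The plan is to deduce Lemma~\ref{L5-20} from the abstract minimax principle of Lemma~\ref{L2-6}, in the packaged form of Lemma~\ref{L5-19}, by choosing the cheapest possible admissible family. Concretely, I would take $\overline{\mathcal{F}}$ to be the collection of all singletons $\{u\}$ with $u\in U$, where $U$ is the open subset of $S(c)$ introduced in $(\ref{e5-3})$, and set $\Theta=\emptyset$. Since every singleton is compact and connected, $\overline{\mathcal{F}}$ is trivially a homotopy-stable family of compact subsets of $U$ with (empty, hence closed) boundary; in particular the requirement that $\Theta$ lie in a connected component of $\Phi^{-}(c)$ is vacuous and $\sup I^{-}(\Theta)=-\infty$.

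Next I would identify the associated minimax value. By definition,
\[
e_{\overline{\mathcal{F}}}^{-}=\inf_{H\in\overline{\mathcal{F}}}\max_{u\in H}I^{-}(u)=\inf_{u\in U}I^{-}(u).
\]
By Lemma~\ref{L5-7}, the map $u\mapsto u_{t_{\star}^{-}(u)}$ sends $U$ into $\Phi^{-}(c)$ with $I^{-}(u)=E_{\gamma,\mu}(u_{t_{\star}^{-}(u)})$, while any $v\in\Phi^{-}(c)$ satisfies $t_{\star}^{-}(v)=1$ (by the uniqueness built into Lemma~\ref{L5-7}), so $I^{-}(v)=E_{\gamma,\mu}(v)$; hence $\inf_{u\in U}I^{-}(u)=\inf_{v\in\Phi^{-}(c)}E_{\gamma,\mu}(v)=d^{-}(c)$. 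By Lemma~\ref{L5-5}, $E_{\gamma,\mu}$ is bounded below on $\Phi^{-}(c)$ by a positive constant, so $d^{-}(c)>0$; and since $U\neq\emptyset$ for $0<|\mu|<\mu_{\ast}$, $\Phi^{-}(c)$ is nonempty and $d^{-}(c)<\infty$. Therefore $\max\{\sup I^{-}(\Theta),0\}=0<e_{\overline{\mathcal{F}}}^{-}=d^{-}(c)<\infty$, which is exactly the quantitative hypothesis needed to invoke Lemma~\ref{L5-19}.

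With these verifications in place, applying Lemma~\ref{L5-19} to $\overline{\mathcal{F}}$ and $\Theta=\emptyset$ yields, for $|\mu|>0$ small enough, a Palais--Smale sequence $\{u_{n}\}\subset\Phi^{-}(c)$ for $E_{\gamma,\mu}$ restricted to $U$ at level $d^{-}(c)$; since $U$ is open in $S(c)$ one has $T_{u_{n}}U=T_{u_{n}}S(c)$, so $\{u_{n}\}$ is a Palais--Smale sequence for $E_{\gamma,\mu}$ restricted to $S(c)$ at the level $d^{-}(c)$, as claimed. The one genuinely delicate point — already absorbed into Lemma~\ref{L5-19} via Lemma~\ref{L5-18} — is that, because the constraint set $U$ carries a boundary $\partial U$, the deformation flow used to extract the Palais--Smale sequence must be prevented from crossing $\partial U$; this is exactly where the smallness of $|\mu|$ enters, through the fact that the energy of any limit on $\partial U$ blows up (like $\widetilde{M}\to\infty$ as $|\mu|\to 0$) while the minimax level $d^{-}(c)$ stays bounded, so that the relevant sublevel sets of $E_{\gamma,\mu}$ remain away from $\partial U$. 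Once the sequence has been produced, no further smallness of $|\mu|$ is needed, and the subsequent compactness/convergence is governed by Lemma~\ref{L2-9}.
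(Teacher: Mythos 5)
Your proposal is correct and follows essentially the same route as the paper: the paper's own (abbreviated) proof likewise takes $\overline{\mathcal{F}}$ to be the family of singletons of $U$ with $\Theta=\emptyset$, identifies $e_{\overline{\mathcal{F}}}^{-}=\inf_{u\in U}I^{-}(u)=d^{-}(c)$, and invokes Lemma \ref{L5-19} (mirroring the argument of Lemma \ref{L4-17}). Your additional verifications — that $t_{\star}^{-}(v)=1$ for $v\in\Phi^{-}(c)$, that $d^{-}(c)>0$ by Lemma \ref{L5-5}, and that openness of $U$ in $S(c)$ upgrades the Palais--Smale property from $U$ to $S(c)$ — are exactly the details the paper leaves implicit.
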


For each $u\in U_{r}$, using the argument in the proof of Lemma \ref{L5-6}
one has
\begin{equation*}
E_{\gamma ,\mu }(u)\geq \frac{|\mu |}{q}\widetilde{C}^{-\frac{N(q-2)}{%
2(Np-N-\alpha )}}c^{\frac{2Np-q(N+\alpha )}{2(Np-N-\alpha )}}B(u)^{\frac{%
N(q-2)}{2(Np-N-\alpha )}}-\frac{\gamma }{2p}B(u).
\end{equation*}%
Define
\begin{equation*}
\overline{h}(s):=\frac{|\mu |}{q}\widetilde{C}^{-\frac{N(q-2)}{2(Np-N-\alpha
)}}c^{\frac{2Np-q(N+\alpha )}{2(Np-N-\alpha )}}s^{\frac{N(q-2)}{%
2(Np-N-\alpha )}}-\frac{\gamma }{2p}s\text{ for }s>0.
\end{equation*}%
Since $Nq+2\alpha -2Np>0,$ a direct calculation shows that $\overline{h}%
(R_{0})=0$ and $\overline{h}(s)>0$ when $s>R_{0},$ where%
\begin{equation*}
R_{0}:=\left[ \frac{\gamma q}{2p|\mu |}\widetilde{C}^{\frac{N(q-2)}{%
2(Np-N-\alpha )}}c^{-\frac{2Np-q(N+\alpha )}{2(Np-N-\alpha )}}\right] ^{%
\frac{2(Np-N-\alpha )}{Nq-2Np+2\alpha }}>0.
\end{equation*}%
This shows that $E_{\gamma ,\mu }(u)>0$ if $B(u)>R_{0}.$ Moreover, it
follows from Lemma \ref{L5-7} that the set $\Phi _{r}^{+}(c)$ is contained
in $\mathbf{B}_{R_{0}}:=\left\{ u\in U_{r}\text{ }|\text{ }%
B(u)<R_{0}\right\} .$ Note that $\gamma B(u)>\Gamma _{\ast }A(u)^{\frac{%
Np-N-\alpha }{2}}$ for all $u\in U.$ Then we have $E_{\gamma ,\mu }(u)>0$ if
$A(u)\geq r_{0}:=\left( \frac{\gamma R_{0}}{\Gamma _{\ast }}\right) ^{\frac{2%
}{Np-N-\alpha }}.$ Clearly, $r_{0}\rightarrow \infty $ as $c\rightarrow 0.$
Now we define
\begin{equation*}
\mathbf{A}_{r_{0}}:=\left\{ u\in U_{r}\text{ }|\text{ }A(u)<r_{0}\right\} .
\end{equation*}%
Clearly, $\mathbf{B}_{R_{0}}\subset \mathbf{A}_{r_{0}}.$ Furthermore, we
have the following result.

\begin{lemma}
\label{L5-22} Let $N\geq 2,\overline{p}<p<2_{\alpha }^{\ast },\overline{q}%
<q\leq 2^{\ast }$ with $Nq+2\alpha -2Np>0$. Then we have $-\infty <\overline{%
m}(c):=\inf_{u\in \mathbf{A}_{r_{0}}}E_{\gamma ,\mu }(u)<0$ for all $c>0$.
Furthermore, it holds
\begin{equation*}
\overline{m}(c)=\inf_{u\in \Phi _{r}^{+}(c)}E_{\gamma ,\mu }(u)=d^{+}(c).
\end{equation*}
\end{lemma}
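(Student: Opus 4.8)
The plan is to argue exactly as in the proof of Lemma \ref{L3-5}, with the quantities $A$ and $\mathcal{M}^{+}(c)$ there replaced by $B$ and $\Phi ^{+}(c)$ here, and with Lemmas \ref{L5-6} and \ref{L5-7} playing the roles of the corresponding lower estimate and projection result. Three things must be checked: that $\overline{m}(c)>-\infty$, that $\overline{m}(c)<0$, and that $\overline{m}(c)=d^{+}(c)$.

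First I would establish the lower bound. For every $u\in \mathbf{A}_{r_0}\subset U$ the inequality derived just before the statement gives $E_{\gamma ,\mu }(u)\geq \overline{h}(B(u))$, where $\overline{h}$ is the function defined above. The standing hypothesis $Nq+2\alpha -2Np>0$ is precisely the condition $\tfrac{N(q-2)}{2(Np-N-\alpha )}>1$, so $\overline{h}$ is continuous on $[0,\infty )$ with $\overline{h}(0)=0$ and $\overline{h}(s)\to +\infty $ as $s\to \infty $; hence $\overline{h}$ attains a finite global minimum on $[0,\infty )$. Consequently $E_{\gamma ,\mu }(u)\geq \min _{s\geq 0}\overline{h}(s)>-\infty $ for all $u\in \mathbf{A}_{r_0}$, i.e. $\overline{m}(c)>-\infty $.

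Next I would show $\overline{m}(c)<0$ and then identify the two infima. The set $U$ is nonempty (it contains $\omega _{\ast }$ for $0<|\mu |<\mu _{\ast }$), and by Lemma \ref{L5-7}, for any $u\in U$ the dilation $v:=u_{t_{\star }^{+}(u)}$ belongs to $\Phi ^{+}(c)$ and satisfies $E_{\gamma ,\mu }(v)=\inf _{t\geq 0}E_{\gamma ,\mu }(u_{t})<0$. Since $\Phi ^{+}(c)\subset \mathbf{B}_{R_0}\subset \mathbf{A}_{r_0}$, the point $v$ lies in $\mathbf{A}_{r_0}$, whence $\overline{m}(c)\leq E_{\gamma ,\mu }(v)<0$; in particular $\Phi ^{+}(c)\neq \emptyset$ and $d^{+}(c)<0$. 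For the identification, the inclusion $\Phi ^{+}(c)\subset \mathbf{A}_{r_0}$ gives at once $\overline{m}(c)\leq \inf _{u\in \Phi ^{+}(c)}E_{\gamma ,\mu }(u)=d^{+}(c)$; conversely, fixing $u\in \mathbf{A}_{r_0}\subset U$, Lemma \ref{L5-7} again yields $v:=u_{t_{\star }^{+}(u)}\in \Phi ^{+}(c)$ with $E_{\gamma ,\mu }(v)=\inf _{t\geq 0}E_{\gamma ,\mu }(u_{t})\leq E_{\gamma ,\mu }(u_{1})=E_{\gamma ,\mu }(u)$, so $d^{+}(c)\leq E_{\gamma ,\mu }(u)$, and taking the infimum over $u\in \mathbf{A}_{r_0}$ gives $d^{+}(c)\leq \overline{m}(c)$. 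Hence $\overline{m}(c)=d^{+}(c)$.

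Every step here is short once Lemmas \ref{L5-6} and \ref{L5-7} are in hand. The only delicate point is the boundedness from below of $\overline{h}$ — which is exactly where the assumption $Nq+2\alpha -2Np>0$ enters — together with the bookkeeping that the projected functions $u_{t_{\star }^{+}(u)}$ remain inside $\mathbf{A}_{r_0}$, so that the minima over $\mathbf{A}_{r_0}$ and over $\Phi ^{+}(c)$ are genuinely comparable.
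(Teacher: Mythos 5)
Your proof is correct and follows essentially the same route as the paper: the negativity of $\overline{m}(c)$ and the identification $\overline{m}(c)=d^{+}(c)$ are obtained exactly as in the paper, via the projection $u\mapsto u_{t_{\star }^{+}(u)}$ of Lemma \ref{L5-7} combined with the inclusion $\Phi ^{+}(c)\subset \mathbf{B}_{R_{0}}\subset \mathbf{A}_{r_{0}}$. The only (harmless) variation is in the bound $\overline{m}(c)>-\infty $: the paper uses the Hartree Gagliardo--Nirenberg estimate $E_{\gamma ,\mu }(u)\geq \frac{1}{2}A(u)-CA(u)^{\frac{Np-N-\alpha }{2}}$ together with $A(u)<r_{0}$, whereas you use $E_{\gamma ,\mu }(u)\geq \overline{h}(B(u))$ and the coercivity of $\overline{h}$ (the exponent $\frac{N(q-2)}{2(Np-N-\alpha )}>1$ precisely because $Nq+2\alpha -2Np>0$), which works equally well.
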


\begin{proof}
For $u\in \mathbf{A}_{r_{0}}$, by (\ref{e2-3}), we have%
\begin{equation}
E_{\gamma ,\mu }(u)\geq \frac{1}{2}A(u)-\frac{\gamma \overline{B}}{2p}c^{%
\frac{N+\alpha -p(N-2)}{2}}A(u)^{\frac{Np-N-\alpha }{2}}.  \notag
\end{equation}%
leading to $\overline{m}(c)>-\infty $. Moreover, for any $u\in U$, according
to Lemma \ref{L5-7} and the fact of $\Phi _{r}^{+}(c)\subset \mathbf{B}%
_{R_{0}}\subset \mathbf{A}_{r_{0}},$ there exists a constant $s_{\star
}^{+}(u)>0$ such that $A(u_{s_{\star }^{+}(u)})<r_{0}$ and $E_{\gamma ,\mu
}(u_{s_{\star }^{+}(u)})<0,$ which implies that $\overline{m}(c)<0.$

It is clear that $\overline{m}(c)\leq \inf_{u\in \Phi _{r}^{+}(c)}E_{\gamma
,\mu }(u),$ since $\Phi _{r}^{+}(c)\subset \mathbf{B}_{R_{0}}\subset \mathbf{%
A}_{r_{0}}$. On the other hand, if $u\in \mathbf{A}_{r_{0}}\subset U_{r}$,
then there exists a constant $s_{\star }^{+}(u)>0$ such that $u_{s_{\star
}^{+}(u)}\in \Phi _{r}^{+}(c)\subset \mathbf{A}_{r_{0}}$ and
\begin{equation*}
E_{\gamma ,\mu }(u_{s_{\star }^{+}(u)})=\min \left\{ E_{\gamma ,\mu }(u_{s})%
\text{ }|\text{ }s>0\text{ and }A(u_{s})<r_{0}\right\} \leq E_{\gamma ,\mu
}(u).
\end{equation*}%
This implies that $\inf_{\Phi _{r}^{+}(c)}E_{\gamma ,\mu }(u)\leq \overline{m%
}(c)$. So we have $\overline{m}(c)=\inf_{u\in \Phi _{r}^{+}(c)}E_{\gamma
,\mu }(u),$ and together with (\ref{e5-20}), leading to%
\begin{equation*}
\overline{m}(c)=\inf_{u\in \Phi _{r}^{+}(c)}E_{\gamma ,\mu }(u)=d^{+}(c).
\end{equation*}%
This completes the proof.
\end{proof}

We now give the existence of a local minimizer for $E_{\gamma ,\mu }$
restricted to $\mathbf{A}_{r_{0}}.$

\begin{theorem}
\label{T5-1} Let $N\geq 2,\overline{p}<p<2_{\alpha }^{\ast },\overline{q}%
<q\leq 2^{\ast }$ with $Nq+2\alpha -2Np>0$. Then for $c>0$ small enough, the
functional $E_{\gamma ,\mu }$ has a local minimizer $u^{+}$ on $S(c)$
satisfying $E_{\gamma ,\mu }(u^{+})=\overline{m}(c)<0$ and
\begin{equation*}
\left[ \frac{2p(N(q-2)-4)}{\Gamma _{\ast }(Np-N-\alpha )(Nq+2\alpha -2Np)}%
\right] ^{\frac{2}{Np-N-\alpha -2}}\leq A(u^{+})<r_{0}.
\end{equation*}
\end{theorem}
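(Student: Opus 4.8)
The plan is to imitate the proof of Theorem~\ref{T3-1}, working throughout in the radial subspace $H_{r}^{1}(\mathbb{R}^{N})$ — a symmetric decreasing rearrangement lowers $A$, does not decrease $B$ (since $\gamma>0$), preserves $C$ and the $L^{2}$ norm, and hence maps $\mathbf{A}_{r_{0}}$ into itself without raising the energy, so $\overline{m}(c)$ is unchanged — and to obtain $u^{+}$ as an interior local minimizer of $E_{\gamma,\mu}$ on $\mathbf{A}_{r_{0}}$. First I would take a minimizing sequence $\{v_{n}\}\subset\mathbf{A}_{r_{0}}$ for $\overline{m}(c)$, which is finite and negative by Lemma~\ref{L5-22}. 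Since every $u\in U$ satisfies $E_{\gamma,\mu}(u_{t_{\star}^{+}(u)})=\inf_{t\geq0}E_{\gamma,\mu}(u_{t})\leq E_{\gamma,\mu}(u)$ with $u_{t_{\star}^{+}(u)}\in\Phi^{+}(c)$ (Lemma~\ref{L5-7}), replacing $v_{n}$ by $\omega_{n}:=(v_{n})_{t_{\star}^{+}(v_{n})}\in\Phi^{+}(c)\subset\mathbf{B}_{R_{0}}\subset\mathbf{A}_{r_{0}}$ produces a new minimizing sequence, and $\overline{m}(c)=\inf_{\Phi^{+}(c)}E_{\gamma,\mu}=d^{+}(c)$ again by Lemma~\ref{L5-22}.

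The main obstacle is to prevent $\{\omega_{n}\}$ from drifting to $\partial\mathbf{A}_{r_{0}}$, which now has two pieces: the sphere $\{A(u)=r_{0}\}$ and the lateral boundary $\partial U$ (this second piece is the new feature compared with Theorem~\ref{T3-1}). For the first piece I would use the inequality $E_{\gamma,\mu}(u)\geq\overline{h}(B(u))$ recorded just before the theorem: $\overline{h}$ is continuous, $\overline{h}(R_{0})=0$ and $\overline{h}$ is increasing on a left neighbourhood of $R_{0}$, while $A(u)\geq r_{0}-\rho$ forces, through $\gamma B(u)>\Gamma_{\ast}A(u)^{(Np-N-\alpha)/2}$, a bound $B(u)\geq R_{0}-\delta(\rho)$ with $\delta(\rho)\to0$; hence for $\rho$ small $E_{\gamma,\mu}>\overline{m}(c)/2$ on $\overline{\mathbf{A}}_{r_{0}}\setminus\mathbf{A}_{r_{0}-\rho}$, so $\omega_{n}\in\mathbf{A}_{r_{0}-\rho}$ for all large $n$. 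For the lateral boundary I would rely on Lemma~\ref{L5-18}, exactly as Lemma~\ref{L5-19} does in the minimax construction: since $\{\omega_{n}\}\subset\Phi^{+}(c)$ has $t_{\star}^{+}(\omega_{n})\equiv1$, case $(iii)$ of that lemma cannot occur, so approaching $\partial U$ would force $E_{\gamma,\mu}(\omega_{n})\to E_{\gamma,\mu}(\bar v_{t_{\infty}^{+}})\to E_{\gamma,\mu}(\bar v)>\widetilde{M}$ via case $(ii)$, with $\widetilde{M}$ as large as we wish once $|\mu|$ is small, contradicting $E_{\gamma,\mu}(\omega_{n})\to\overline{m}(c)<0$. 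Thus, for $|\mu|$ small, the minimization takes place in a region interior to $\mathbf{A}_{r_{0}}$ and bounded away from $\partial U$.

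With $\{\omega_{n}\}$ so confined, I would apply Ekeland's variational principle on the complete set $\overline{\mathbf{A}}_{r_{0}}$ (following Soave~\cite{S1}) to get $\{u_{n}\}$ with $\Vert u_{n}-\omega_{n}\Vert_{H^{1}}\to0$ and $E_{\gamma,\mu}(u_{n})\to\overline{m}(c)$; being interior, $\{u_{n}\}$ is a genuine Palais--Smale sequence for $E_{\gamma,\mu}|_{S(c)}$, and since $Q(\omega_{n})=0$ with $\{\omega_{n}\}$ bounded, also $Q(u_{n})\to0$. Then Lemma~\ref{L2-9} (applicable because the level $\overline{m}(c)\neq0$) gives, up to a subsequence, $u_{n}\to u^{+}$ strongly in $H^{1}(\mathbb{R}^{N})$, with $u^{+}\in S(c)$ solving $(P_{c})$ for some $\lambda>0$ and $E_{\gamma,\mu}(u^{+})=\overline{m}(c)<0$; replacing $u^{+}$ by $|u^{+}|$ (which keeps it in $\mathbf{A}_{r_{0}}$ with the same energy) one may take $u^{+}>0$. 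This is the desired interior local minimizer.

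Finally I would record the two bounds on $A(u^{+})$. Passing to the limit in $Q(\omega_{n})=0$, $g_{\omega_{n}}^{\prime\prime}(1)>0$ and $\omega_{n}\in U$, and using $\Phi^{0}(c)=\emptyset$ (Lemma~\ref{L5-12}) together with the fact that $\{\omega_{n}\}$ stays away from $\partial U$, we get $u^{+}\in\Phi^{+}(c)$, in particular $Q(u^{+})=0$ and $g_{u^{+}}^{\prime\prime}(1)>0$. Writing $g_{u^{+}}^{\prime\prime}(1)>0$ in the form $(\ref{e2-7})$ and inserting the Gagliardo--Nirenberg inequality $(\ref{e2-1})$ yields the announced lower bound $A(u^{+})\geq\bigl[\tfrac{2p(N(q-2)-4)}{\Gamma_{\ast}(Np-N-\alpha)(Nq+2\alpha-2Np)}\bigr]^{2/(Np-N-\alpha-2)}$ (indeed this equals the quantity $K$ defining $U$, which is dominated by the bound one gets directly, since $\tfrac{2}{Np-N-\alpha}<1$), while $u^{+}\in\overline{\mathbf{A}}_{r_{0}-\rho}$ gives $A(u^{+})<r_{0}$. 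This completes the proof, the hard part being, as indicated, the control of the minimizing sequence near the lateral boundary $\partial U$, which forces the smallness assumption on $|\mu|$.
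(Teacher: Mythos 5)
Your proposal is correct and follows essentially the same route as the paper: replace the minimizing sequence in $\mathbf{A}_{r_{0}}$ by its projections $(v_{n})_{t_{\star}^{+}(v_{n})}\in\Phi^{+}(c)$, apply Ekeland's variational principle, keep the sequence away from both pieces of the boundary (positivity of $E_{\gamma,\mu}$ near $\{A(u)=r_{0}\}$ and Lemma~\ref{L5-18}~$(ii)$--$(iii)$ near $\partial U$ for $|\mu|$ small), and conclude with Lemma~\ref{L2-9}. Your derivation of the lower bound $A(u^{+})\geq K$ from $g_{u^{+}}''(1)>0$ combined with \eqref{e2-1} is in fact a more transparent justification than the paper's brief appeal to $\Phi^{-}(c)\subset\mathbf{A}_{r_{0}}$, and your rearrangement remark cleanly supplies the radial setting that Lemma~\ref{L2-9} requires.
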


\begin{proof}
Let $\{v_{n}\}$ be a minimizing sequence for $E_{\gamma ,\mu }$ restricted
to $\mathbf{A}_{r_{0}}$. Since $(v_{n})_{s_{\star }^{+}(v_{n})}\in \Phi
_{r}^{+}(c)\subset \mathbf{B}_{R_{0}}\subset \mathbf{A}_{r_{0}}$, we have $%
A((v_{n})_{s_{\star }^{+}(v_{n})})<r_{0}$ and
\begin{equation*}
E_{\gamma ,\mu }((v_{n})_{s_{\star }^{+}(v_{n})})=\min \left\{ E_{\gamma
,\mu }((v_{n})_{s})\text{ }|\text{ }s>0\text{ and }A((v_{n})_{s})<r_{0}%
\right\} \leq E_{\gamma ,\mu }(v_{n}).
\end{equation*}%
In this way we obtain a new minimizing sequence $\{\psi _{n}\}:=\left\{
(v_{n})_{t_{\star }^{+}(v_{n})}\right\} $ for $E_{\gamma ,\mu }$ restricted
to $\mathbf{A}_{r_{0}}$. Since $E_{\gamma ,\mu }(u)>0$ for all $u\in U_{r}$
with $A(u)\geq r_{0},$ by Ekeland's variational principle, we can find a new
minimizing sequence $\left\{ u_{n}^{+}\right\} \subset \mathbf{A}_{r_{0}}$
for $\overline{m}\left( c\right) $ such that%
\begin{equation}
\Vert u_{n}^{+}-\psi _{n}\Vert _{H^{1}}^{2}\rightarrow 0\text{ as }%
n\rightarrow \infty ,  \label{e5-18}
\end{equation}%
which is also a Palais-Smale sequence for $E_{\gamma ,\mu }$ on $S(c)$.
Since $\left\{ \psi _{n}\right\} \subset \Phi _{r}^{+}(c)\subset \mathbf{A}%
_{r_{0}}$, it follows from (\ref{e5-18}) that $Q(u_{n}^{+})\rightarrow 0$ as
$n\rightarrow \infty .$ Thus, by Lemmas \ref{L2-9} and \ref{L5-18} $%
(ii)-(iii)$, we obtain that $u_{n}^{+}\rightarrow u^{+}$ strongly in $H^{1}(%
\mathbb{R}^{N})$ up to a subsequence, and $u^{+}$ is an interior local
minimizer for $E_{\gamma ,\mu }$ restricted to $\mathbf{A}_{r_{0}}$
satisfying $E_{\gamma ,\mu }(u^{+})=\overline{m}(c)<0.$

For any $u\in \Phi _{r}^{-}\left( c\right) ,$ by (\ref{e5-3}), one has%
\begin{equation*}
A(u)<\left[ \frac{2p(N(q-2)-4)}{\Gamma _{\ast }(Np-N-\alpha )(Nq+2\alpha
-2Np)}\right] ^{\frac{2}{Np-N-\alpha -2}}<\left( \frac{\gamma R_{0}}{\Gamma
_{\ast }}\right) ^{\frac{2}{Np-N-\alpha }}
\end{equation*}%
for $c>0$ sufficiently small, which implies that $u\in \mathbf{A}_{r_{0}}.$
This indicates that $\Phi _{r}^{-}(c)\subset \mathbf{A}_{r_{0}}.$ Since $%
E_{\gamma ,\mu }(u)>0$ for all $u\in \Phi _{r}^{-}(c)$ and $E_{\gamma ,\mu
}(u^{+})<0,$ we obtain that%
\begin{equation*}
\left[ \frac{2p(N(q-2)-4)}{\Gamma _{\ast }(Np-N-\alpha )(Nq+2\alpha -2Np)}%
\right] ^{\frac{2}{Np-N-\alpha -2}}\leq A(u^{+})<r_{0}.
\end{equation*}%
This completes the proof.
\end{proof}

First of all, we prove Theorem \ref{t6} $(a)$. Let $\{u_{n}\}\subset
\mathcal{M}_{r}^{-}(c)$ be a Palais--Smale sequence for $E_{\gamma ,\mu }$
restricted to $S(c)$ at level $m^{-}(c)>0$ whose existence is ensured by
Lemma \ref{L5-21}. It follows from Lemma \ref{L5-8} that $\{u_{n}\}$ is
bounded in $H^{1}(\mathbb{R}^{N})$. Hence, by Lemma \ref{L2-9}, $(P_{c})$
has a ground state solution $u^{-}$ which is radially symmetric and
satisfies $E_{\gamma ,\mu }(u^{-})=m^{-}(c)>0$ for some $\lambda ^{-}>0$.

Next, we give the proof of Theorem \ref{t6} $(b)$. By Lemmas \ref{L5-6} and %
\ref{L5-20}, there exists a bounded Palais--Smale sequence $\{u_{n}\}\subset
\Phi _{r}^{-}(c)$ for $E_{\gamma ,\mu }$ restricted to $S(c)$ at the level $%
d^{-}(c).$ Thus, it follows from Lemmas \ref{L2-9} and \ref{L5-18} $(i)$
that $(P_{c})$ admits a positive radial solution $u^{-}\in \Phi ^{-}(c)$
satisfying $E_{\gamma ,\mu }(u^{-})=d^{-}(c)>0$ for some $\lambda >0$.
Therefore, together with Theorem \ref{T5-1}, $(P_{c})$ has two positive
radial solutions $(\lambda ^{\pm },u^{\pm })\in \mathbb{R}^{+}\times H^{1}(%
\mathbb{R}^{N})$ satisfying $E_{\gamma ,\mu }(u^{-})>0>E_{\gamma ,\mu
}(u^{+})$ and $A(u^{-})<A(u^{+}).$ This completes the proof.

\section{Dynamic properties of standing waves}

\subsection{Stability/Strong instability of standing waves}

\textbf{Firstly, we give the proof of Theorem \ref{t8}:} Following the
classical arguments of Cazenave and Lions \cite{CL}. Assume that there exist
an $\varepsilon _{0}>0$, a sequence of initial data $\left\{
u_{n}^{0}\right\} \subset U_{r}$ and a time sequence $\left\{ t_{n}\right\}
\subset \mathbb{R}^{+}$ such that the unique solution $\psi _{n}$ of the
Cauchy problem (\ref{e1-0}) with initial data $u_{n}^{0}=\psi _{n}(0,\cdot )$
satisfies
\begin{equation*}
\text{dist}_{H^{1}}(u_{n}^{0},\mathcal{M}_{c}^{r_{0}})<\frac{1}{n}\text{ and
dist}_{H^{1}}(\psi _{n}(t_{n},\cdot ),\mathcal{M}_{c}^{r_{0}})\geq
\varepsilon _{0}.
\end{equation*}%
Without loss of generality, we may assume that $\left\{ u_{n}^{0}\right\}
\subset U_{r}$. Since $\text{dist}_{H^{1}}(u_{n}^{0},\mathcal{M}%
_{c}^{r_{0}})\rightarrow 0$ as $n\rightarrow \infty $, the conservation laws
of the energy and mass imply that $\psi _{n}(t_{n},\cdot )$ is a minimizing
sequence for $d^{+}(c)$ provided $\psi _{n}(t_{n},\cdot )\subset \mathbf{A}%
_{r_{0}}$. Indeed, if $\psi _{n}(t_{n},\cdot )\subset (H^{1}\backslash
\mathbf{A}_{r_{0}})$, then by the continuity there exists $\bar{t}_{n}\in
\lbrack 0,t_{n})$ such that $\left\{ \psi _{n}(\bar{t}_{n},\cdot )\right\}
\subset \partial \mathbf{A}_{r_{0}}$. Hence, by Theorem \ref{T5-1} one has
\begin{equation*}
E_{\gamma ,\mu }(\psi _{n}(\bar{t}_{n},\cdot ))\geq \inf_{u\in \partial _{%
\mathbf{A}_{r_{0}}}}E_{\gamma ,\mu }(u)>\inf_{u\in \mathbf{A}%
_{r_{0}}}E_{\gamma ,\mu }(u)=d^{+}(c),
\end{equation*}%
which is a contradiction. Therefore, $\left\{ \psi _{n}(t_{n},\cdot
)\right\} $ is a minimizing sequence for $d^{+}(c)$. Then there exists $%
v_{0}\in \mathcal{M}_{c}^{r_{0}}$ such that $\psi _{n}(t_{n},\cdot
)\rightarrow v_{0}$ in $H^{1}$, which contradicts with dist$_{H^{1}}(\psi
_{n}(t_{n},\cdot ),\mathcal{M}_{c}^{r_{0}})\geq \varepsilon _{0}.$ This
completes the proof.

Define
\begin{equation*}
\mathcal{G}_{1}:=\left\{ u\in V\text{ }|\text{\ }E_{\gamma ,\mu
}(u)<\inf_{\Lambda ^{-}(c)}E_{\gamma ,\mu }\right\} ,\quad \mathcal{G}%
_{2}:=\left\{ u\in U\text{ }|\text{\ }E_{\gamma ,\mu }(u)<\inf_{\Phi
^{-}(c)}E_{\gamma ,\mu }\right\} ,
\end{equation*}%
and
\begin{equation*}
\mathcal{G}_{3}:=\left\{ u\in S(c)\text{ }|\text{\ }E_{\gamma ,\mu
}(u)<\inf_{\mathcal{M}(c)}E_{\gamma ,\mu }\right\} .
\end{equation*}

\begin{proposition}
\label{L6-1} Assume that the assumptions of Theorem \ref{t4} $(b)$ are
satisfied. Let $u\in \mathcal{G}_{1}$, then $g_{u}(s)$ has a local maximum
point $s_{\ast }^{-}(u)$. If $0<s_{\ast }^{-}(u)<\min \{s_{u}^{\ast
},1\}<\max \{s_{u}^{\ast },1\}<s_{\ast }^{+}(u)$ and $|x|u\in L^{2}(\mathbb{R%
}^{N})$, then the solution $\psi $ of the Cauchy problem (\ref{e1-0}) with
initial datum $u$ blows-up in finite time.
\end{proposition}

\begin{proof}
Following the classical arguments of Glassey \cite{G0} and Berestycki and
Cazenave \cite{BC}. For every $u\in V$, the $g_{u}(s)$ has a unique local
maximum point $s_{\ast }^{-}(u)$ on $(0,s_{u}^{\ast })$ and $g_{u}(s)$ is
strict decreasing and concave on $(s_{\ast }^{-}(u),s_{u}^{\ast })$. We
claim that if $u\in V$, then
\begin{equation}
Q(u)\leq E_{\gamma ,\mu }(u)-\inf_{u\in \Lambda ^{-}(c)}E_{\gamma ,\mu }.
\label{e6-3}
\end{equation}%
By the fact that $0<s_{\ast }^{-}(u)<\min \{s_{u}^{\ast },1\}<\max
\{s_{u}^{\ast },1\}<s_{\ast }^{+}(u)$, we have $Q(u)<0$, and
\begin{eqnarray*}
E_{\gamma ,\mu }(u)=g_{u}(1) &\geq &g_{u}(s_{\ast }^{-}(u))-(s_{\ast
}^{-}(u)-1)g_{u}^{\prime }(1) \\
&=&E_{\gamma ,\mu }(u_{s_{\ast }^{-}(u)})-|Q(u)|(1-s_{\ast }^{-}(u)) \\
&\geq &\inf_{u\in \Lambda ^{-}(c)}E_{\gamma ,\mu }(u)-|Q(u)| \\
&=&\inf_{u\in \Lambda ^{-}(c)}E_{\gamma ,\mu }(u)+Q(u),
\end{eqnarray*}%
which proves the claim.

Now let us consider the solution $\psi $ with initial datum $u$. Since by
the assumption that $0<s_{\ast }^{-}(u)<\min \{s_{u}^{\ast },1\}<\max
\{s_{u}^{\ast },1\}<s_{\ast }^{+}(u)$, and that the map $u\mapsto s_{\ast
}^{-}(u)$ is continuous, we also deduce that $0<s_{\ast }^{-}(\psi (\tau
))<\min \{s_{\psi (\tau )}^{\ast },1\}<\max \{s_{\psi (\tau )}^{\ast
},1\}<s_{\ast }^{+}(\psi (\tau ))$ for every $|\tau |$ small, say $|\tau |<%
\overline{\tau }$. By (\ref{e6-3}) and recalling the assumption $E_{\gamma
,\mu }(u)<\inf_{u\in \Lambda ^{-}(c)}E_{\gamma ,\mu }$, we have
\begin{eqnarray*}
Q(\psi (\tau )) &\leq &E_{\gamma ,\mu }(\psi (\tau ))-\inf_{u\in \Lambda
^{-}(c)}E_{\gamma ,\mu } \\
&=&E_{\gamma ,\mu }(u)-\inf_{u\in \Lambda ^{-}(c)}E_{\gamma ,\mu }:=-\delta
<0,
\end{eqnarray*}%
for every such $\tau $, and hence $0<s_{\ast }^{-}(\psi (\overline{\tau }%
))<\min \{s_{\psi (\overline{\tau })}^{\ast },1\}<\max \{s_{\psi (\overline{%
\tau })}^{\ast },1\}<s_{\ast }^{+}(\psi (\overline{\tau }))$. By using the
continuity argument one has
\begin{equation*}
Q(\psi (t))\leq -\delta \text{ for }t\in (T_{\min },T_{\max }).
\end{equation*}%
Since $|x|u\in L^{2}(\mathbb{R}^{N})$, it follows from the Virial identity
\cite[Proposition 6.5.1]{C} that the function
\begin{equation*}
f(t)=\int_{\mathbb{R}^{N}}|x|^{2}|\psi (t,x)|^{2}dx
\end{equation*}%
is of class $C^{2}$ with $f^{\prime \prime }(t)=8Q(\psi (t))\leq -8\delta <0$
for $t\in (T_{\min },T_{\max })$. Integrating twice in time gives
\begin{equation*}
0\leq f(t)\leq f(0)+f^{\prime }(0)t-4\delta t^{2}.
\end{equation*}%
Since the right hand side becomes negative for $t$ sufficiently large, it is
necessary that both $T_{\min }$ and $T_{\max }$ are bounded, which in turn
implies final time blow-up. The proof is complete.
\end{proof}

By using similar arguments, we have the following two propositions.

\begin{proposition}
\label{L6-2}Assume that the assumptions if Theorem \ref{t6} $(b)$ are
satisfied. Let $u\in \mathcal{G}_{2}$, then $g_{u}(s)$ has a local maximum
point $s_{\star }^{-}(u)$. If $0<s_{\star }^{-}(u)<\min \{s_{u}^{\star
},1\}<\max \{s_{u}^{\star },1\}<s_{\star }^{+}(u)$ and $|x|u\in L^{2}(%
\mathbb{R}^{N})$, then the solution $\psi $ of the Cauchy problem (\ref{e1-0}%
) with initial datum $u$ blows-up in finite time.
\end{proposition}

\begin{proposition}
\label{L6-4}Assume that the assumptions of either Theorem \ref{t4} $(a)$ or
Theorem \ref{t6} $(a)$ are satisfied. Let $u\in \mathcal{G}_{3}$, then $%
g_{u}(s)$ has a unique global maximum point $s^{-}(u)$. If $0<s^{-}(u)<1$
and $|x|u\in L^{2}(\mathbb{R}^{N})$, then the solution $\psi $ of the Cauchy
problem (\ref{e1-0}) with initial datum $u$ blows-up in finite time.
\end{proposition}

\textbf{Now we give the proof of Theorem \ref{t7}:} We only prove the
conclusion $(ii)$, since the others are similar. Since $u^{-}$ has
exponential decay at infinity, we have $|x|u^{-}\in L^{2}(\mathbb{R}^{N})$.
For every $s>1$, let $\psi _{t}$ be the solution to the Cauchy problem (\ref%
{e1-0}) with initial datum $(u^{-})_{s}$. We have $(u^{-})_{s}\rightarrow
u^{-}$ as $s\rightarrow 1$, and hence it is sufficient to prove that $\psi
_{t}$ blows-up in finite time. Clearly, there exists a $\overline{\mu }>0$
such that $s_{\ast }^{-}((u^{-})_{s})=\frac{1}{s}<1<s_{(u^{-})_{s}}^{\ast }$
for $\mu <\overline{\mu }$, and
\begin{equation*}
E_{\gamma ,\mu }((u^{-})_{s})<E_{\gamma ,\mu }((u^{-})_{s_{\ast
}^{-}(u^{-})})=\inf_{\Lambda ^{-}(c)}E_{\gamma ,\mu }.
\end{equation*}%
Therefore, it follows from Lemma \ref{L6-1} that the solution $\psi _{t}$
blows-up in finite time. This completes the proof.

\subsection{Blow up rate}

\begin{lemma}
\label{L6-3} Assume that $N\geq 2$, $\alpha \in (\max \{0,N-4\},N)$, $2\leq
p<2_{\alpha }^{\ast }$ and $\overline{q}<q<2^{\ast }$ with $2(Np-N-\alpha
)>(N-2)(q-2)p.$ For any $0<t<\tau <T<\infty $, we have the following
estimates:
\begin{eqnarray*}
&&\Vert \nabla g_{1}(\psi )\Vert _{L_{t}^{\frac{4p}{N+\alpha -(N-4)p}%
}((t,\tau );L_{x}^{\frac{2Np}{2Np-N-\alpha }})} \\
&\lesssim &(\tau -t)^{\frac{N+\alpha -(N-2)p}{2p}}\Vert \psi \Vert
_{L_{t}^{\infty }((t,\tau );L_{x}^{2})}^{\frac{(N+\alpha -(N-2)p)(p-1)}{p}%
}\Vert \nabla \psi \Vert _{L_{t}^{\infty }((t,\tau );L_{x}^{2})}^{\frac{%
(Np-N-\alpha )(p-1)}{p}}\Vert \nabla \psi \Vert _{L_{t}^{\frac{4p}{%
Np-N-\alpha }}((t,\tau );L_{x}^{\frac{2Np}{N+\alpha }})},
\end{eqnarray*}%
and
\begin{eqnarray*}
&&\Vert \nabla g_{2}(\psi )\Vert _{L_{t}^{\frac{4p}{N+\alpha -(N-4)p}%
}((t,\tau );L_{x}^{\frac{2Np}{2Np-N-\alpha }})} \\
&\lesssim &(\tau -t)^{\frac{N+\alpha -(N-2)p}{2p}}\Vert \psi \Vert
_{L_{t}^{\infty }((t,\tau );L_{x}^{2})}^{\frac{2(N+\alpha )+(q-4)Np}{2p}%
}\Vert \nabla \psi \Vert _{L_{t}^{\infty }((t,\tau );L_{x}^{2})}^{\frac{%
2(Np-N-\alpha )-(N-2)(q-2)p}{2p}}\Vert \nabla \psi \Vert _{L_{t}^{\frac{4p}{%
Np-N-\alpha }}((t,\tau );L_{x}^{\frac{2Np}{N+\alpha }})}.
\end{eqnarray*}
\end{lemma}

\begin{proof}
It follows from the H\"{o}lder inequality and (\ref{e2-4}) that
\begin{eqnarray*}
\Vert \nabla g_{1}(\psi )\Vert _{L_{x}^{\frac{2Np}{2Np-N-\alpha }}}
&\lesssim &\Vert \psi \Vert _{L_{x}^{\frac{2Np}{N+\alpha }}}^{2(p-1)}\Vert
\nabla \psi \Vert _{L_{x}^{\frac{2Np}{N+\alpha }}} \\
&\lesssim &\Vert \psi \Vert _{L_{x}^{2}}^{\frac{(N+\alpha -(N-2)p)(p-1)}{p}%
}\Vert \nabla \psi \Vert _{L_{x}^{2}}^{\frac{(Np-N-\alpha )(p-1)}{p}}\Vert
\nabla \psi \Vert _{L_{x}^{\frac{2Np}{N+\alpha }}}.
\end{eqnarray*}%
Then for any $0<t<\tau <T<+\infty $, we get
\begin{eqnarray*}
&&\Vert \nabla g_{1}(\psi )\Vert _{L_{t}^{\frac{4p}{N+\alpha -(N-4)p}%
}((t,\tau );L_{x}^{\frac{2Np}{2Np-N-\alpha }})} \\
&\lesssim &(\tau -t)^{\frac{N+\alpha -(N-2)p}{2p}}\Vert \psi \Vert
_{L_{t}^{\infty }((t,\tau );L_{x}^{2})}^{\frac{(N+\alpha -(N-2)p)(p-1)}{p}%
}\Vert \nabla \psi \Vert _{L_{t}^{\infty }((t,\tau );L_{x}^{2})}^{\frac{%
(Np-N-\alpha )(p-1)}{p}}\Vert \nabla \psi \Vert _{L_{t}^{\frac{4p}{%
Np-N-\alpha }}((t,\tau );L_{x}^{\frac{2Np}{N+\alpha }})}.
\end{eqnarray*}%
Similarly, we have
\begin{eqnarray*}
\Vert \nabla g_{2}(\psi )\Vert _{L_{x}^{\frac{2Np}{2Np-N-\alpha }}}
&\lesssim &\Vert \psi \Vert _{L_{x}^{\frac{Np(q-2)}{Np-N-\alpha }%
}}^{q-2}\Vert \nabla \psi \Vert _{L_{x}^{\frac{2Np}{N+\alpha }}} \\
&\lesssim &\Vert \psi \Vert _{L_{x}^{2}}^{\frac{2(N+\alpha )+(q-4)Np}{2p}%
}\Vert \nabla \psi \Vert _{L_{x}^{2}}^{\frac{2(Np-N-\alpha )-(N-2)(q-2)p}{2p}%
}\Vert \nabla \psi \Vert _{L_{x}^{\frac{2Np}{N+\alpha }}},
\end{eqnarray*}%
and further
\begin{eqnarray*}
&&\Vert \nabla g_{2}(\psi )\Vert _{L_{t}^{\frac{4p}{N+\alpha -(N-4)p}%
}((t,\tau );L_{x}^{\frac{2Np}{2Np-N-\alpha }})} \\
&\lesssim &(\tau -t)^{\frac{N+\alpha -(N-2)p}{2p}}\Vert \psi \Vert
_{L_{t}^{\infty }((t,\tau );L_{x}^{2})}^{\frac{2(N+\alpha )+(q-4)Np}{2p}%
}\Vert \nabla \psi \Vert _{L_{t}^{\infty }((t,\tau );L_{x}^{2})}^{\frac{%
2(Np-N-\alpha )-(N-2)(q-2)p}{2p}}\Vert \nabla \psi \Vert _{L_{t}^{\frac{4p}{%
Np-N-\alpha }}((t,\tau );L_{x}^{\frac{2Np}{N+\alpha }})},
\end{eqnarray*}%
for any $0<t<\tau <T<+\infty $. This completes the proof.
\end{proof}

\textbf{Now we give the proof of Theorem \ref{t10}:} We choose the $L^{2}$%
-admissible $(\frac{4p}{Np-N-\alpha },\frac{2Np}{N+\alpha })$. By applying
the Proposition \ref{P3.4} $(i)$ and Lemma \ref{L6-3}, we have
\begin{eqnarray*}
&&\Vert \nabla \psi \Vert _{L_{t}^{\infty }((t,\tau );L_{x}^{2})}+\Vert
\nabla \psi \Vert _{L_{t}^{\frac{4p}{Np-N-\alpha }}((t,\tau );L_{x}^{\frac{%
2Np}{N+\alpha }})} \\
&\leq &C\Vert \nabla \psi \Vert _{L_{x}^{2}}+C(\tau -t)^{\frac{N+\alpha
-(N-2)p}{2p}}\Vert \psi \Vert _{L_{t}^{\infty }((t,\tau );L_{x}^{2})}^{\frac{%
(N+\alpha -(N-2)p)(p-1)}{p}}\Vert \nabla \psi \Vert _{L_{t}^{\infty
}((t,\tau );L_{x}^{2})}^{\frac{(Np-N-\alpha )(p-1)}{p}}\Vert \nabla \psi
\Vert _{L_{t}^{\frac{4p}{Np-N-\alpha }}((t,\tau );L_{x}^{\frac{2Np}{N+\alpha
}})} \\
&&+C(\tau -t)^{\frac{N+\alpha -(N-2)p}{2p}}\Vert \psi \Vert _{L_{t}^{\infty
}((t,\tau );L_{x}^{2})}^{\frac{2(N+\alpha )+(q-4)Np}{2p}}\Vert \nabla \psi
\Vert _{L_{t}^{\infty }((t,\tau );L_{x}^{2})}^{\frac{2(Np-N-\alpha
)-(N-2)(q-2)p}{2p}}\Vert \nabla \psi \Vert _{L_{t}^{\frac{4p}{Np-N-\alpha }%
}((t,\tau );L_{x}^{\frac{2Np}{N+\alpha }})} \\
&\leq &C\Vert \nabla \psi \Vert _{L_{x}^{2}}+C(\tau -t)^{\frac{N+\alpha
-(N-2)p}{2p}}\left( 1+\Vert \nabla \psi \Vert _{L_{t}^{\infty }((t,\tau
);L_{x}^{2})}+\Vert \nabla \psi \Vert _{L_{t}^{\frac{4p}{Np-N-\alpha }%
}((t,\tau );L_{x}^{\frac{2Np}{N+\alpha }})}\right) ^{\frac{(Np-N-\alpha
)(p-1)+p}{p}}.
\end{eqnarray*}%
Denote $G_{t}(\tau ):=1+\Vert \nabla \psi \Vert _{L_{t}^{\infty }((t,\tau
);L_{x}^{2})}+\Vert \nabla \psi \Vert _{L_{t}^{\frac{4p}{Np-N-\alpha }%
}((t,\tau );L_{x}^{\frac{2Np}{N+\alpha }})}$. Then, for any $0<t<\tau
<+\infty $, there exists $C_{0}>0$ such that
\begin{equation*}
G_{t}(\tau )\leq C_{0}(1+\Vert \nabla \psi \Vert _{L_{x}^{2}})+C_{0}(\tau
-t)^{\frac{N+\alpha -(N-2)p}{2p}}G_{t}^{\frac{(Np-N-\alpha )(p-1)+p}{p}%
}(\tau ).
\end{equation*}%
If $\psi (t,x)$ is a blow-up solution for the Cauchy problem (\ref{e1-0}),
then $G_{t}(\tau )$ is continuous and nondecreasing on $(t,T_{\max })$ and $%
\lim_{t\rightarrow T_{\max }}G_{t}(\tau )=+\infty $. Moreover, for $\tau >t$%
, we have $G_{t}(\tau )\rightarrow 1+\Vert \nabla \psi \Vert _{L_{x}^{2}}$
as $\tau \rightarrow t$. Thus, there exists $\tau _{0}\in (t,T_{\max })$
such that $G_{t}(\tau _{0})=(C_{0}+1)(1+\Vert \nabla \psi \Vert
_{L_{x}^{2}}) $. So, it follows that
\begin{eqnarray*}
1+\Vert \nabla \psi \Vert _{L_{x}^{2}} &=&G_{t}(\tau _{0})-C_{0}(1+\Vert
\nabla \psi \Vert _{L_{x}^{2}}) \\
&\leq &C_{0}(\tau _{0}-t)^{\frac{N+\alpha -(N-2)p}{2p}}(C_{0}+1)^{\frac{%
(Np-N-\alpha )(p-1)+p}{p}}(1+\Vert \nabla \psi \Vert _{L_{x}^{2}})^{\frac{%
(Np-N-\alpha )(p-1)+p}{p}} \\
&\leq &(C_{0}+1)^{\frac{(Np-N-\alpha )(p-1)+2p}{p}}(T-t)^{\frac{N+\alpha
-(N-2)p}{2p}}(1+\Vert \nabla \psi \Vert _{L_{x}^{2}})^{\frac{(Np-N-\alpha
)(p-1)+p}{p}}.
\end{eqnarray*}%
Thus for all $0<t<\tau <+\infty $, we obtain that
\begin{equation*}
1+\Vert \nabla \psi \Vert _{L_{x}^{2}}\geq \frac{1}{(C_{0}+1)^{\frac{%
(Np-N-\alpha )(p-1)+2p}{(Np-N-\alpha )(p-1)}}(T-t)^{\frac{N+\alpha -(N-2)p}{%
2(Np-N-\alpha )(p-1)}}}.
\end{equation*}%
This completes the proof.

\section{Acknowledgments}

The authors thank Louis Jeanjean for some valuable suggestions that help
them to improve a first version of the paper and, in particular, to show
that under the assumptions of Theorem \ref{t6} $(b)$, the functional $%
E_{\gamma ,\mu }$ is bounded from below and coercive on $S(c)$. J. Sun was
supported by the National Natural Science Foundation of China (Grant No.
11671236) and Shandong Provincial Natural Science Foundation (Grant No.
ZR2020JQ01), and T.F. Wu was supported, in part, by the Ministry of Science
and Technology, Taiwan (Grant No. 110-2115-M-390-006-MY2).

\section{Declarations}

\textbf{Conflict of interest} The authors confirm that there is no conflict of interest.

\end{document}